\documentclass[11pt,reqno,final]{amsart}

\usepackage{amsmath}
\usepackage{amscd}
\usepackage{amssymb}
\usepackage{latexsym}
\usepackage{stmaryrd} 
\usepackage{enumerate}
\usepackage[table]{xcolor}

\usepackage{graphicx} 
\usepackage{showkeys}

\usepackage[arrow,curve,matrix,tips,2cell]{xy}
  \SelectTips{eu}{10} \UseTips
  \UseAllTwocells
%

\setlength{\textwidth}{14cm} \setlength{\textheight}{22cm}
\setlength{\oddsidemargin}{0cm} \setlength{\topmargin}{0cm}
\setcounter{tocdepth}{2}
\definecolor{lgray}{rgb}{0.9,0.9,0.9}
\newcommand{\nsp}[1]{\cellcolor{lgray}{#1}}

\newtheorem{theorem}{Theorem}[section]
\newtheorem*{theorem*}{Theorem}
\newtheorem{lemma}[theorem]{Lemma}
\newtheorem*{lemma*}{Lemma}
\newtheorem{corollary}[theorem]{Corollary}
\newtheorem*{corollary*}{Corollary}
\newtheorem{proposition}[theorem]{Proposition}

\newtheorem{remark}[theorem]{Remark}
\newtheorem{definition}[theorem]{Definition}

%
%
\newcommand{\bgl}{\begin{equation}} 
\newcommand{\egl}{\end{equation}}
\newcommand{\bgloz}{\begin{equation*}} 
\newcommand{\egloz}{\end{equation*}}
\newcommand{\bgln}{\begin{eqnarray}} 
\newcommand{\egln}{\end{eqnarray}}
\newcommand{\bglnoz}{\begin{eqnarray*}} 
\newcommand{\eglnoz}{\end{eqnarray*}}
\newcommand{\btheo}{\begin{theorem}}
\newcommand{\etheo}{\end{theorem}}
\newcommand{\btheooz}{\begin{theorem*}}
\newcommand{\etheooz}{\end{theorem*}}
\newcommand{\blemma}{\begin{lemma}}
\newcommand{\elemma}{\end{lemma}}
\newcommand{\blemmaoz}{\begin{lemma*}}
\newcommand{\elemmaoz}{\end{lemma*}}
\newcommand{\bproof}{\begin{proof}}
\newcommand{\eproof}{\end{proof}}
\newcommand{\bbew}{\begin{beweis}}
\newcommand{\ebew}{\end{beweis}}
\newcommand{\bremark}{\begin{remark}\em}
\newcommand{\eremark}{\end{remark}}
\newcommand{\bdefin}{\begin{definition}}
\newcommand{\edefin}{\end{definition}}
\newcommand{\bprop}{\begin{proposition}}
\newcommand{\eprop}{\end{proposition}}
\newcommand{\bcor}{\begin{corollary}}
\newcommand{\ecor}{\end{corollary}}
\newcommand{\bcoroz}{\begin{corollary*}}
\newcommand{\ecoroz}{\end{corollary*}}
\newcommand{\bfa}{\begin{cases}} 
\newcommand{\efa}{\end{cases}}
\newcommand{\Ext}{\operatorname{Ext}}
%
%
%

%
%

%

\newcommand{\cK}{\mathcal K}
\newcommand{\cL}{\mathcal L}

\newcommand{\cO}{\mathcal O}

\newcommand{\cT}{\mathcal T}

%

%
%
%

\def\Cz{\mathbb{C}}

\def\Nz{\mathbb{N}}

\def\Tz{\mathbb{T}}
\def\Zz{\mathbb{Z}}

\def\1z{\mathbb{1}}
%
%

%

%
%
%
\newcommand{\an}[1]{``#1''} 

\newcommand{\ma}{\mapsto} 
\newcommand\onto{\twoheadrightarrow} 
\newcommand\into{\hookrightarrow} 
\newcommand{\Rarr}{\Rightarrow} 
\newcommand{\Larr}{\Leftarrow} 

\newcommand{\sgn}{\operatorname{sgn}}
\newcommand{\ve}{\varepsilon}

\def\SEMI{\mbox{$\times\kern-2pt\vrule height5pt width.6pt \kern3pt $}}

\newcommand{\Prim}{{\rm Prim\,}} 

\newcommand{\id}{{\rm id}}


\renewcommand{\ker}{{\rm ker}\,}

\newcommand{\abs}[1]{\lvert#1\rvert} 
\newcommand{\norm}[1]{\left\|#1\right\|} 
\newcommand{\defeq}{\mathrel{:=}} 

\newcommand{\dop}{\text{: }} 

%
%
%
\newcommand{\ftn}[3]{ #1 \colon #2 \rightarrow #3 }
\newcommand{\kk}{\operatorname{KK}}
\newcommand{\ksix}{\operatorname{K}_{\mathrm{six}}^{+}}
%
%
%
%
\newcommand{\lge}{\left\{} 
\newcommand{\rge}{\right\}} 
\newcommand{\lru}{\left(} 
\newcommand{\rru}{\right)} 
\newcommand{\leck}{\left[} 
\newcommand{\reck}{\right]} 
\newcommand{\lsp}{\left\langle} 
\newcommand{\rsp}{\right\rangle} 
\newcommand{\rukl}[1]{\lru #1 \rru} 
\newcommand{\eckl}[1]{\leck #1 \reck} 
\newcommand{\gekl}[1]{\lge #1 \rge} 
\newcommand{\spkl}[1]{\lsp #1 \rsp} 
%
%
\newcommand{\menge}[2]{\gekl{ #1 \dop #2 }} 
%
%
%
%
%

%
%
%

\newcommand{\FK}{FK_+}

\begin{document}

\title[The isomorphism problem for C*-algebras of Artin monoids]{The isomorphism problem for semigroup C*-algebras of right-angled Artin monoids}
\begin{abstract}
Semigroup C*-algebras for right-angled Artin monoids were introduced and studied by Crisp and Laca. In the paper at hand, we are able to present  the complete answer to their question of when such C*-algebras are isomorphic. The answer to this question is presented both in terms of properties of the graph defining the Artin monoids as well as in terms of classification by $K$-theory, and is obtained using recent results from classification of non-simple C*-algebras.

Moreover, we are able to answer another natural question: Which of these semigroup C*-algebras for right-angled Artin monoids are isomorphic to graph algebras? We give a complete answer, and note the consequence that many of the C*-algebras under study are semiprojective.
\end{abstract}

\author{S{\o}ren Eilers}
\address{Department of Mathematical Sciences \\
        University of Copenhagen\\
        Universitetsparken~5 \\
        DK-2100 Copenhagen, Denmark}
        \email{eilers@math.ku.dk }

\author{Xin Li}
\address{School of Mathematical Sciences\\
Queen Mary University of London\\
Mile End Road\\
London E1 4NS}
\email{xin.li@qmul.ac.uk}
\author{Efren Ruiz}
\address{Department of Mathematics\\University of Hawaii,
Hilo\\200 W. Kawili St.\\
Hilo, Hawaii\\
96720-4091 USA}
        \email{ruize@hawaii.edu}




\maketitle


\setlength{\parindent}{0pt} \setlength{\parskip}{0.5cm}

\section{Introduction}

Semigroup C*-algebras for right-angled Artin monoids were introduced and studied by Crisp and Laca in \cite{jcml:tarfag} and \cite{jcml:bqitaag}. In \cite{jcml:bqitaag}, the authors ask how to classify these semigroup C*-algebras up to *-isomorphism. We now present the complete answer to their question.

The Artin monoids studied here are given by countable, symmetric and antireflexive graphs $\Gamma=(V,E)$ as
\bgloz
  A_{\Gamma}^+ \defeq \spkl{\menge{\sigma_v}{v \in V} \vert \sigma_v \sigma_w = \sigma_w \sigma_v \text{ if } (v,w) \in E}^+.
\egloz
The corresponding right-angled Artin groups, defined by the same generators and relations, are special cases of Artin groups, which form an important class of examples of groups. We refer the reader to \cite{{jcml:tarfag},{jcml:bqitaag}} and the references therein for more details.

Semigroup C*-algebras of left cancellative semigroups, generated by the left regular representation of the semigroup, have been studied for a long time. Recently, there has been a renewed interested in this topic (see \cite{{xl:scas},{xl:nscca}} and the references therein). By \cite{jcml:tarfag}, the semigroup C*-algebras $C^*(  A_{\Gamma}^+)$ attached to right-angled Artin monoids are given as the universal C*-algebras for 
\[
\left\langle\menge{s_v}{v \in V} \left|
  \begin{array}{c}\eckl{s_v,s_w} = \eckl{s_v,s_w^*} = 0 \ \text{if} \ (v,w) \in E \\
  s_v^* s_w  =\delta_{v,w}\ \text{if} \ (v,w) \not\in E \end{array}\right\rangle\right.
\]
We answer the question of when two graphs $\Gamma,\Lambda$ produce C*-algebras that are isomorphic. Although we emphasize that our results cover the full range of such graphs, it is instructive to state our main results in the case of finite graphs. This is a specialization of the combination of Theorems \ref{general-theo} and \ref{K-theory-main}.

\begin{theorem}
\label{theo_finite-graphs}
Let $\Gamma$ and $\Lambda$ be finite undirected graphs with no loops. The following are equivalent
\begin{enumerate}
\item $C^*(A_{\Gamma}^+)\cong C^*(A_{\Lambda}^+)$
\item \begin{enumerate}
\item $t(\Gamma)=t(\Lambda)$
\item $N_k(\Gamma)+N_{-k}(\Gamma)=N_k(\Lambda)+N_{-k}(\Lambda)$ for all $k\in \Zz$
\item $N_0(\Gamma)>0$ or 
\[
\sum_{k>0}N_k(\Gamma)\equiv \sum_{k>0}N_k(\Lambda) \mod 2
\]
\end{enumerate}
\item $[\FK(C^*(A_{\Gamma}^+)),[1_{C^*(A_{\Gamma}^+)}]]\cong [\FK(C^*(A_{\Lambda}^+)),[1_{C^*(A_{\Lambda}^+)}]]$
\end{enumerate}
\end{theorem}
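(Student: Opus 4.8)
The plan is to derive Theorem~\ref{theo_finite-graphs} as the finite-graph specialization of Theorems~\ref{general-theo} and~\ref{K-theory-main}, so the substance lies in those two ingredients; here is how I would organize them. \emph{Step 1: $(1)\Leftrightarrow(3)$, via classification.} I would first analyze the ideal structure of $C^*(A_\Gamma^+)$ from the Crisp--Laca presentation, exploiting that $A^+_\Gamma$ turns joins of graphs into direct products of monoids (hence tensor products of the C*-algebras) and disjoint unions into free products. This should show that, for finite $\Gamma$, the primitive ideal space is finite and of a restricted combinatorial type, that every subquotient is a nuclear C*-algebra in the bootstrap class built from Toeplitz and Cuntz(--Toeplitz) algebras, and that the purely infinite subquotients are Kirchberg algebras. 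One then invokes the appropriate recent classification theorem for non-simple C*-algebras to conclude that on this class the functor $A\mapsto [\FK(A),[1_A]]$ is a complete invariant: an isomorphism of filtered $K$-theory respecting the unit class lifts to a $*$-isomorphism (existence), and a $*$-isomorphism induces such a filtered isomorphism (functoriality). The unit class is retained precisely because we classify up to isomorphism rather than stable isomorphism.

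\emph{Step 2: $(2)\Leftrightarrow(3)$, via computation, and then combine.} Here I would compute $[\FK(C^*(A_\Gamma^+)),[1]]$ explicitly from $\Gamma$, using the product/free-product behaviour from Step~1 together with a Künneth and six-term bookkeeping, so that the filtered $K$-theory decomposes into contributions indexed by combinatorial data of $\Gamma$. I expect $t(\Gamma)$ to record the isomorphism type (``shape'') of the filtered ideal lattice; the numbers $N_k(\Gamma)$ to record the $K$-groups of the corresponding subquotients, with the $+k$/$-k$ symmetry reflecting a genuine ambiguity in the invariant — e.g. a torsion group $\Zz/|k|$ that cannot be told apart when it sits in $K_0$ versus $K_1$ of a tensor product of Cuntz algebras, where Künneth puts equal torsion in both degrees; and the positivity/parity clause (2c) to capture exactly the extra information carried by $[1]$ beyond the ungraded $K$-theory — a free summand ($N_0>0$) makes the position of the unit harmless, while in its absence only the parity of $\sum_{k>0}N_k$ survives. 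Then $(2)\Leftrightarrow(3)$ becomes the statement that two such filtered objects-with-unit are isomorphic iff these numerical invariants agree, a finite check once a normal form is set up, and chaining $(1)\Leftrightarrow(3)\Leftrightarrow(2)$ finishes the proof.

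\emph{Main obstacle.} The delicate point is Step~1: filtered $K$-theory is \emph{not} a complete invariant for C*-algebras with an arbitrary finite ideal lattice (Meyer--Nest and subsequent counterexamples), so one must prove that the ideal lattices arising from right-angled Artin monoids always fall into a restricted family — e.g. assembled from linearly ordered or ``accordion''-type pieces, which is presumably also what underlies the graph-algebra and semiprojectivity statements in the abstract — for which a completeness theorem is genuinely available, and one must treat the mixed situation of non-simple, non-purely-infinite (Toeplitz) directions alongside the purely infinite ones within a single classification result. A secondary nuisance is arranging the $K$-theory computation so that the somewhat unusual shape of condition (2), in particular the $N_k+N_{-k}$ symmetrization and the mod-$2$ clause, emerges structurally rather than through a long case analysis.
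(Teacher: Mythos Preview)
Your high-level plan — obtain Theorem~\ref{theo_finite-graphs} by specializing Theorems~\ref{general-theo} and~\ref{K-theory-main} — matches the paper, but your proposed route for Step~1 is not what the paper does, and the obstacle you identify is real and not resolvable in the way you suggest.

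The paper does \emph{not} establish $(1)\Leftrightarrow(3)$ by invoking a classification theorem asserting that filtered $K$-theory with unit is complete on this class. The primitive ideal spaces here are products of two-point Sierpi\'nski spaces (one factor per co-irreducible component with $1<|V_i|<\infty$), i.e.\ $n$-cubes, and when $t(\Gamma)>0$ there are additional $\Prim(\cT)$ factors, so the space is not even finite. These are neither linear nor accordion, and no off-the-shelf filtered-$K$-theory classification covers them. Instead the paper runs the cycle $(1)\Rightarrow(3)\Rightarrow(2)\Rightarrow(1)$: the first implication is functoriality; $(3)\Rightarrow(2)$ extracts $t(\Gamma)$, $N_{\pm k}(\Gamma)$ and the parity datum directly from the primitive ideal space, the $K_0$-groups of quotients by minimal primitive ideals, and the position of $[1]$ relative to the image of $K_0(\cK)$; and the substantive implication $(2)\Rightarrow(1)$ is proved \emph{constructively}, not via filtered $K$-theory. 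Concretely, one writes $C^*(A_\Gamma^+)$ as a tensor product of the building blocks $\cT$, $\cO_\infty$, $E_n^{\pm 1}$, $E_1^0$ (Theorem~\ref{list}) and then uses a short list of two-factor isomorphisms — $\cO_\infty\otimes E_n^{+1}\cong\cO_\infty\otimes E_n^{-1}$, $E_1^0\otimes E_n^{+1}\cong E_1^0\otimes E_n^{-1}$, $E_m^{+1}\otimes E_n^{-1}\cong E_m^{-1}\otimes E_n^{+1}$, $E_m^{+1}\otimes E_n^{+1}\cong E_m^{-1}\otimes E_n^{-1}$ — each of which is established using classification only for C*-algebras with at most two nontrivial ideals (Theorem~\ref{class_pi-orthideals} plus extension theory \`a la Brown--D\u{a}d\u{a}rlat). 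These swap moves let one normalize all signs under the hypotheses in (2), producing the isomorphism in (1). Thus Theorem~\ref{K-theory-main} is a \emph{corollary} of Theorem~\ref{general-theo}, not the engine driving it.

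A secondary point: your reading of the $N_k+N_{-k}$ symmetry is off. It does not come from a $K_0$/$K_1$ ambiguity (indeed $K_1(\cO_n)=0$ for $n\geq 2$); rather, $E_n^{+1}$ and $E_n^{-1}$ are distinguished solely by the sign of the map $K_0(\cK)\to K_0(E_n^{\pm 1})$ sending $[e]$ to $\pm(n-1)[1]$, and the swap lemmas above show this sign can be absorbed into a neighbouring tensor factor. The mod-$2$ clause in (2c) records exactly the residual sign that cannot be absorbed when there is no $E_1^0$ or $\cO_\infty$ factor available.
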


In this result, the invariant mentioned in (3)  is the standard ordered filtered $K$-theory -- implicitly containing the primitive ideal space -- which has been conjectured in \cite{segrer:gclil} to be a complete invariant for a large and important class of C*-algebras. This conjecture is still open, but has been confirmed in a multitude of situations partially overlapping with the case at hand. But the main strength of our result is in fact that the \emph{ad hoc} invariant of (2) is extremely easy to compute for $\Gamma$ and $\Lambda$. Indeed, as we shall detail below, the numbers $t(\Gamma)$ and $N_k(\Gamma)$ are obtained by dividing $\Gamma$ into co-irreducible components and then counting how many of these are singletons, yielding $t(\Gamma)$, and counting how many of the remaining co-irreducible components have Euler characteristic $k$, yielding $N_k(\Gamma)$. In Figure \ref{fivecases} this process has been completed for all 34 graphs with five vertices, and we conclude that they define 18 different C*-algebras.

\begin{figure}
\newcommand{\getp}[1]{\includegraphics[scale=.08]{penta#1}}
\begin{center}
\begin{tabular}{|c|c|c|c|c|c|c|}\hline
\getp{1}&\getp{2}&\getp{3}&\getp{4}&\getp{5}&\getp{6}&\getp{7}\\
$N_{-4}=1$&$N_{-3}=1$&$N_{-2}=1$&$N_{-2}=1$&$N_{-2}=1$&$N_{-1}=1$&$N_{-1}=1$\\\hline
\getp{8}&\getp{9}&\getp{10}&\getp{11}&\getp{12}&\getp{13}&\getp{14}\\
$N_{-1}=1$&$N_{-1}=1$&$N_{-1}=1$&$N_{-1}=1$&$N_{-1}=1$&$N_0=1$&$N_0=1$\\\hline
\getp{15}&\getp{16}&\getp{17}&\getp{18}&\getp{19}&\getp{20}&\getp{21}\\
$N_{0}=1$&$N_{0}=1$&$N_{0}=1$&$N_{0}=1$&$N_0=1$&$N_1=1$&$N_1=1$\\\hline
\nsp{\getp{22}}&\getp{23}&\nsp{\getp{24}}&\getp{25}&\nsp{\getp{26}}&\nsp{\getp{27}}&\nsp{\getp{28}}\\
\nsp{$N_{-3}=1$}&$N_{-2}=1$&$\nsp{N_{-2}=1}$&$N_{-1}=2$&\nsp{$N_{-1}=1$}&\nsp{$N_{-1}=1$}&\nsp{$N_1=1$}\\
\nsp{$t=1$}&$N_{-1}=1$&\nsp{$t=1$}&&\nsp{$t=1$}&\nsp{$t=1$}&\nsp{$t=1$}\\\hline
\nsp{\getp{29}}&\nsp{\getp{30}}&\nsp{\getp{31}}&\nsp{\getp{32}}&\nsp{\getp{33}}&\nsp{\getp{34}}&\\
\nsp{$N_{0}=1$}&\nsp{$N_{-2}=1$}&\nsp{$N_{-1}=2$}&\nsp{$N_{-1}=1$}&\nsp{$N_{-1}=1$}&\nsp{$t=5$}&\\
\nsp{$t=1$}&\nsp{$t=2$}&\nsp{$t=1$}&\nsp{$t=2$}&\nsp{$t=3$}&\nsp{}&\\\hline
\end{tabular}
\end{center}
\caption{Invariants for all graphs with 5 vertices. Any quantity not mentioned equals zero.}\label{fivecases}
\end{figure}

When the number of vertices increases, it is possible for two graphs to have different sets of invariants, yet define the same C*-algebra. For instance, defining a graph $\Gamma'$ with 10 vertices having its co-irreducible components chosen among those given in Figure \ref{fivecases} so that
\[
N_{-1}(\Gamma')=2
\]
and $\Lambda'$ similarly defined so that
\[
N_1(\Lambda')=2,
\]
then $C^*(A_{\Gamma'}^+)$ and $C^*(A_{\Lambda'}^+)$ will be isomorphic.
Similarly, we may define $\Gamma''$ and $\Lambda''$ with 15 vertices each so that
\begin{gather*}
N_{-1}(\Gamma'')=1,N_0(\Gamma'')=1,N_1(\Gamma'')=1\\
N_{-1}(\Lambda'')=2,N_0(\Lambda'')=1
\end{gather*}
obtaining that $C^*(A_{\Gamma''}^+)\cong C^*(A_{\Lambda''}^+)$.

In the general case of possibly infinite graphs, an additional quantity $o(\Gamma)$ must be introduced to count those co-irreducible components which have an infinite number of vertices, and to address the possibility of having an infinite number of co-irreducible components, but the necessary condition replacing (2) in this general case is not much more complicated than the one given above.

We note that the C*-algebras associated via semigroups to undirected and loop-free graphs are not always graph C*-algebras in the usual sense, not only because graph C*-algebras are defined using directed graphs. We provide a complete description of when $C^*(A_{\Gamma}^+)$ is in fact a graph C*-algebra, and note that there is a rather complicated relation between $\Gamma$ and the graph $G_\Gamma$ when in fact $C^*(A_{\Gamma}^+)\cong C^*(G_\Gamma)$. In this case, $G_\Gamma$ is not unique.

Our results have surprising consequences for the issue of \emph{stable relations} (cf.\ \cite{tal:lsppc}) among sets of isometries of separable Hilbert spaces, subject to commutativity or orthogonality relations as given by the graph $\Gamma$, or, which is nearly the same, for the issue of \emph{semiprojectivity} (cf. \cite{bb:stc}) of the C*-algebras $C^*(A_{\Gamma}^+)$. Indeed, it is easy to see by spectral theory that $C^*(A_{\Gamma}^+)$ is semiprojective when $\Gamma$ is a finite graph with no edges, corresponding to a family of isometries having orthogonal range projections. Similarly, it follows e.g.\ from considering the celebrated Voiculescu matrices (\cite{dv:acfruoca},\cite{retal:iacu}) that when $\Gamma$ is a complete graph with more than one vertex, $C^*(A_{\Gamma}^+)$ cannot have this property.

In fact, it is a question attracting a lot of attention (see e.g. \cite{bb:ssc}) to what extent it is possible to obtain stable relations for commuting sets of stable relations, or to what extent tensor products of semiprojective C*-algebras can themselves be semiprojective, the most intriguing open problem being the case of $\cO_3\otimes \cO_3$. In our setting, because we have found that many settings in which some isometries are required to be orthogonal, and others to commute, give the same C*-algebras as the ones where all are required to be orthogonal, we immediately see that many such settings --- for instance the first 12 listed in Figure \ref{fivecases} --- provide for stable relations.  Involving the notion of graph algebras as outlined above, we will show in Theorem \ref{spnsp} semiprojectivity and nonsemiprojectivity for many of the C*-algebras under study, and it follows from our results that exactly those C*-algebras arising from the graphs in Figure \ref{fivecases} in the non-shaded entries are semiprojective. We have not been able to resolve the issue completely as indeed it is related to the Blackadar conjecture mentioned above, the first open case having six vertices and two co-irreducible components each with Euler characteristic $-2$.

\section{Preliminaries}\label{preliminaries}

\subsection{Semigroup C*-algebras for right-angled Artin monoids}
\label{sgpC-RAAM}

Let $\Gamma$ be a countable graph. $\Gamma = (V,E)$ is given by a countable set of vertices $V$ and a set of edges $E$. We only consider unoriented edges, and given two vertices, there is at most one edge joining these two vertices. In other words, we can think of $E$ as a symmetric subset of $V \times V$ not containing elements of the diagonal.

Given such a graph $\Gamma = (V,E)$, the right-angled Artin group $A_{\Gamma}$ is defined as follows:
\bgloz
  A_{\Gamma} \defeq \spkl{\menge{\sigma_v}{v \in V} \vert \sigma_v \sigma_w = \sigma_w \sigma_v \ \text{if} \ (v,w) \in E}.
\egloz
Similarly, the right-angled Artin monoid $A_{\Gamma}^+$ is defined as follows:
\bgloz
  A_{\Gamma}^+ \defeq \spkl{\menge{\sigma_v}{v \in V} \vert \sigma_v \sigma_w = \sigma_w \sigma_v \ \text{if} \ (v,w) \in E}^+.
\egloz
It turns out that the canonical semigroup homomorphism $A_{\Gamma}^+ \to A_{\Gamma}$ is injective (see \cite{lp:amig}). Moreover, it is shown in \cite{jcml:tarfag} that $A_{\Gamma}^+ \subseteq A_{\Gamma}$ is quasi-lattice ordered. This means that for every $g$ in $A_{\Gamma}$, either $(g A_{\Gamma}^+) \cap A_{\Gamma}^+ = \emptyset$ or there exists $p \in A_{\Gamma}^+$ with $(g A_{\Gamma}^+) \cap A_{\Gamma}^+ = p A_{\Gamma}^+$.

The (left) reduced semigroup C*-algebra of $A_{\Gamma}^+$ is given by
$$
C^*_{\lambda}(A_{\Gamma}^+) = C^*\spkl{\menge{S_v}{v \in V}} \subseteq \cL(\ell^2(A_{\Gamma}^+)),
$$
where $S_v$ is the isometry on $\ell^2(A_{\Gamma}^+)$ acting on the canonical orthonormal basis $\{e_x\}_{x \in A_{\Gamma}^+}$ by $S_v e_x = e_{\sigma_v x}$. The full semigroup C*-algebra of $A_{\Gamma}^+$ is defined as
$$
  C^*(A_{\Gamma}^+) = 
C^*_{\text{univ}}\left\langle\menge{s_v}{v \in V} \left|
  \begin{array}{c}\eckl{s_v,s_w} = \eckl{s_v,s_w^*} = 0 \ \text{if} \ (v,w) \in E \\
  s_v^* s_w  =\delta_{v,w}\ \text{if} \ (v,w) \not\in E \end{array}\right\rangle\right.$$
The canonical homomorphism $C^*(A_{\Gamma}^+) \to C^*_{\lambda}(A_{\Gamma}^+)$ is an isomorphism by \cite{jcml:tarfag}. Hence we do not distinguish between reduced and full versions and simply write $C^*(A_{\Gamma}^+)$ for the semigroup C*-algebra of $A_{\Gamma}^+$.

\subsection{Co-irreducible components}
\label{coirrco}

The graph $\Gamma$ is called co-reducible if there exist non-empty subsets $V_1$ and $V_2$ of $V$ with $V = V_1 \sqcup V_2$ such that $V_1 \times V_2 \subseteq E$. $\Gamma$ is called co-irreducible if $\Gamma$ is not co-reducible. In general, we can always decompose $\Gamma$ into co-irreducible components. This means that there exist co-irreducible graphs $\Gamma_i = (V_i,E_i)$ such that $A_{\Gamma}^+ \cong \bigoplus_i A_{\Gamma_i}^+$ (and also $A_{\Gamma} \cong \bigoplus_i A_{\Gamma_i}$). As explained in \cite{jcml:bqitaag}, these co-irreducible components are found by looking at the opposite graph of $\Gamma$. For the semigroup C*-algebra, we get $C^*(A_{\Gamma}^+) \cong \bigotimes_i C^*(A_{\Gamma_i}^+)$. Note that if there are (necessarily countably) infinitely many co-irreducible components, the tensor product is defined as an inductive limit of finite tensor products with respect to the canonical unital embeddings as tensor factors.

It is shown in \cite{jcml:bqitaag} that for a co-irreducible graph $\Gamma = (V,E)$ with $1 < \abs{V} < \infty$, $C^*(A_{\Gamma}^+)$ has a unique non-trivial ideal isomorphic to the compact operators. It is easy to see the compact operators in the description of $C^*(A_{\Gamma}^+)$ as a concrete C*-algebra on $\ell^2(A_{\Gamma}^+)$: We just have to observe that $1 - \bigvee_{v \in V} S_v S_v^*$ is the orthogonal projection onto the one-dimensional subspace of $\ell^2(A_{\Gamma}^+)$ corresponding to the identity element of $A_{\Gamma}^+$. This projection then generates the ideal of compact operators. The corresponding quotient $C^*_Q (A_{\Gamma}^+)$ is a (unital) Kirchberg algebra satisfying the UCT. However, if our co-irreducible graph has infinitely many vertices, then $C^*(A_{\Gamma}^+)$ itself is a (unital) Kirchberg algebra satisfying the UCT. That we obtain UCT Kirchberg algebras follows also from \cite[Corollary~7.23]{xl:nscca}. The case where $\Gamma$ consists of only one vertex is easy to understand; in that case, $C^*(A_{\Gamma}^+)$ is canonically isomorphic to the Toeplitz algebra $\cT$.

\subsection{Primitive ideal space}
\label{prim}

We can now describe the primitive ideal space of $A_{\Gamma}^+$ for arbitrary $\Gamma$. Let $\Gamma_i = (V_i,E_i)$ be the co-irreducible components of $\Gamma$. Then by \cite[Theorem~4.9]{bb:itpc}, we have an identification
$$
  \Prim(C^*(A_{\Gamma}^+)) \cong \prod_i \Prim(C^*(A_{\Gamma_i}^+)).
$$
Under this identification, an element $(I_i)$ of the space on the right hand side corresponds to the primitive ideal $I$ of $C^*(A_{\Gamma}^+)$ which is generated by $\{\bigotimes_j J_{ij}\}_i$, where $J_{ij} = C^*(A_{\Gamma_j}^+)$ if $j \neq i$ and $J_{ii} = I_i$. Since each of the $\Gamma_i$ is co-irreducible, the primitive ideal space $\Prim(C^*(A_{\Gamma_i}^+))$ is easy to describe because of the results summarized above:
\begin{itemize}
\item If $\Gamma_i$ just consists of one point, then $\Prim(C^*(A_{\Gamma_i}^+))$ is homeomorphic to the primitive ideal space of the Toeplitz algebra. This means that as a set, $\Prim(C^*(A_{\Gamma_i}^+))$ is the disjoint union of a point and a circle, and the non-empty open sets are given by unions of the point and open subsets in the usual topology of the circle.
\item If $\Gamma_i$ has more than one, but finitely many vertices, $\Prim(C^*(A_{\Gamma_i}^+))$ consists of two points, one of which is closed (the corresponding primitive ideal is the ideal of compact operators) and the other one is dense.
\item If $\Gamma_i$ has infinitely many vertices, then $\Prim(C^*(A_{\Gamma_i}^+))$ consists of only one point.
\end{itemize}

\subsection{K-theory}
\label{K}

K-theory for $C^*(A_{\Gamma}^+)$ and the quotients $C^*_Q (A_{\Gamma}^+)$ has been computed in \cite{ni:ktcrag} in an ad hoc way, and can also be computed using \cite{jcsexl:kcpasa}. Let us explain the computation via the latter route. First of all, we need the Euler characteristic of a graph $\Gamma$. We view $\Gamma$ as a simplicial complex by defining for every $n = 0, 1, 2, \dotsc$ the set of $n$-simplices by
$$
  K_n \defeq \menge{\gekl{v_0, \dotsc, v_n} \subseteq V}{(v_i,v_j) \in E \ {\rm for} \ {\rm all} \ i, \, j \in \gekl{0, \dotsc, n}, \, i \neq j}.
$$ 
Then we set for a graph $\Gamma$ with finitely many vertices $\chi(\Gamma) \defeq 1 - \sum_{n=0}^{\infty} (-1)^n \abs{K_n}$. 

\begin{remark}\label{anyZ}
It is easy to see that there are co-irreducible graphs attaining any integer as its Euler characteristic. Indeed, letting $\Gamma_{-m}$ denote the graph with $m+1$ vertices and no edges, we clearly have 
\[
\chi(\Gamma_{-m})=-m.
\] 
Systematically generating positive characteristics is harder; one option is to let $\Gamma_{n^2-1}$ denote the graph with $2n+2$ vertices obtained by deleting one edge from the complete bipartite graph $K_{n+1,n+1}$ and note that
\[
\chi(\Gamma_{n^2-1})=n^2-1
\]
To obtain positive characteristics in $\{(n-1)^2,\dots, n^2-1\}$ one may simply add a suitable number of isolated vertices to $\Gamma_{n^2-1}$. 
\end{remark}

Now, by \cite[Theorem~5.2]{jcsexl:kcpasa}, we know that we always have $K_*(C^*(A_{\Gamma}^+)) \cong K_*(\Cz)$, and $K_0(C^*(A_{\Gamma}^+)) \cong \Zz$ is generated by the class of the unit $[1]$. Here we use that right-angled Artin groups satisfy the Baum-Connes conjecture with coefficients because these groups have the Haagerup property (see \cite{gnlr:gaccc}, and also \cite{yadd:hpsgp}). To compute K-theory for the quotient $C^*_Q (A_{\Gamma}^+)$ in the case that $\Gamma$ has (more than one and) finitely many vertices, we consider the short exact sequence $0 \to \cK \to C^*(A_{\Gamma}^+) \to C^*_Q (A_{\Gamma}^+) \to 0$ and its six term exact sequence in K-theory:
\begin{align}
\label{6tes1}
\scalebox{1}{$\vcenter{\xymatrix{
  K_0(\cK) \ar[r] & K_0(C^*(A_{\Gamma}^+)) \ar[r] & K_0(C^*_Q(A_{\Gamma}^+)) \ar[d] \\
  K_1(C^*_Q(A_{\Gamma}^+)) \ar[u] & \ar[l] K_1(C^*(A_{\Gamma}^+)) & \ar[l] K_1(\cK)
  }}$}
\end{align}
Since both $K_1(\cK)$ and $K_1(C^*(A_{\Gamma}^+))$ vanish, all we have to do is to compute the homomorphism $K_0(\cK) \cong \Zz \to \Zz \cong K_0(C^*(A_{\Gamma}^+))$. $K_0(\cK) \cong \Zz$ is generated by the class of any minimal projection. So we can take $e = 1 - \bigvee_{v \in V} S_v S_v^*$. It is easy to see that in $K_0$, $[e] \in K_0(\cK)$ is sent to $\chi(\Gamma) [1] \in K_0(C^*(A_{\Gamma}^+))$. Therefore, by exactness of \eqref{6tes1}, we conclude that $K_0(C^*_Q (A_{\Gamma}^+)) \cong \Zz / \abs{\chi(\Gamma)} \Zz$ and
\[
K_1(C^*_Q (A_{\Gamma}^+)) \cong
\bfa
  \gekl{0} \ \text{if} \ \chi(\Gamma) \neq 0 \\
  \Zz \ \text{if} \ \chi(\Gamma) = 0
\efa.
\]

\section{Extension algebras}\label{coirr}

We now discuss the C*-algebras associated to co-irreducible graphs and see how they are all isomorphic to either the Toeplitz algebra, the Cuntz algebra $\cO_\infty$, or an \emph{extension algebra} as specified below.

\begin{theorem}\label{basicunique}
Consider the following properties for a unital C*-algebra $A$:
\begin{enumerate}[(1)]
\item $A$ contains $\cK$ as an ideal, and $A/\cK$ is a Kirchberg algebra satisfying the UCT,
\item $K_0(A)=\Zz$ with $[1_A]=1$.
\end{enumerate}
For each $k\in \Zz\backslash\{0\}$ there is a unique C*-algebra satisfying (1), (2) and
\begin{enumerate}[(1)]\addtocounter{enumi}{2}
\item The six-term exact sequence for $\cK$ and $A$ is given by
\[
\xymatrix{
\Zz \ar[r]^{k} & \Zz \ar[r] & \Zz/k\Zz \ar[d] \\
  0 \ar[u] & \ar[l] {0} & \ar[l] {0}}
\]
\end{enumerate}
There is also a unique 
 C*-algebra satisfying (1), (2) and
\begin{enumerate}[(1')]\addtocounter{enumi}{2}
\item The six-term exact sequence for $\cK$ and $A$ is given by
\[
\xymatrix{
\Zz \ar[r]^{0} & \Zz \ar@{=}[r] & \Zz\ar[d] \\
  \Zz \ar@{=}[u] & \ar[l] {0} & \ar[l] {0}}
\]
\end{enumerate}
\end{theorem}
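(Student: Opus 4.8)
The plan is to peel $A$ apart into the ideal $\cK$ and the quotient $B:=A/\cK$, pin both down using existing classification results, and then recover $A$ from a classification of extensions; existence will come from the semigroup C*-algebras themselves. First I would observe that under (1)--(2) the ideal $\cK$ is automatically \emph{essential}: if $\cK^{\perp}\neq 0$, then $\cK+\cK^{\perp}\cong\cK\oplus\cK^{\perp}$ is a closed ideal whose image in the simple algebra $A/\cK$ is isomorphic to $\cK^{\perp}$, so either $\cK^{\perp}\subseteq\cK$ (forcing $\cK^{\perp}=0$) or $\cK+\cK^{\perp}=A$, forcing $A\cong\cK\oplus\cK^{\perp}$, which is impossible for unital $A$ since $\cK$ is not unital. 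Hence $0\to\cK\to A\to B\to 0$ is a unital essential extension with $B$ a unital Kirchberg algebra satisfying the UCT. Reading the six-term sequence in (3) off gives $K_0(B)\cong\Zz/k\Zz$ with $[1_B]$ equal to the image of $[1_A]=1$, hence a generator, and $K_1(B)=0$; in the primed case one gets instead $K_0(B)\cong\Zz$ with $[1_B]$ a generator, and $K_1(B)\cong\Zz$. By Kirchberg--Phillips together with R\o rdam's realization of the $K$-theoretic invariants of Kirchberg algebras, there is, up to isomorphism, exactly one such $B$, and it exists.

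Next I would invoke a classification of extensions. Since $B$ is unital, separable, nuclear and purely infinite simple, every unital essential extension of $B$ by $\cK$ is (nuclearly) absorbing by the Elliott--Kucerovsky/Kirchberg absorption theorems; and since $\cK$ is then the unique proper ideal of the total algebra, any isomorphism between two such algebras automatically respects it. Consequently these algebras are classified by the associated ordered six-term exact sequence together with the $K_0$-class of the unit --- this is the instance, for primitive ideal space a two-point space (one open, one closed point) with compact ideal and Kirchberg quotient, of the classification of non-simple C*-algebras used elsewhere in the paper. The listed hypotheses determine this invariant: the groups and all the maps are prescribed by (3) (resp.\ (3$'$)), the order on $K_0(\cK)=\Zz$ is the standard one while the order on $K_0(A)=\Zz$ is forced by the ideal structure, and $[1_A]$ is prescribed by (2). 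Uniqueness of $A$ follows.

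For existence I would use the semigroup C*-algebras. By Sections \ref{coirrco}--\ref{K}, whenever $\Gamma$ is a co-irreducible finite graph with more than one vertex, $C^*(A_\Gamma^+)$ is an extension of exactly this type: it contains $\cK$ as an ideal with Kirchberg UCT quotient, has $K_0\cong\Zz$ with $[1]=1$, and its six-term connecting map is $[e]\mapsto\chi(\Gamma)[1]$, which is precisely the content of (3). By Remark \ref{anyZ} every nonzero value of $\chi$ is realized by such a $\Gamma$ (for instance $\Gamma_{-|k|}$ when $k<0$ and the bipartite-minus-an-edge graphs when $k>0$), and $\chi=0$ is realized by $K_{2,2}$ with one edge deleted (the graph $\Gamma_0$ of Remark \ref{anyZ}), which is co-irreducible with more than one vertex; this handles (3$'$).

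The main obstacle is the classification step: verifying that, for these particular quotients $B$, the ordered six-term sequence together with the class of the unit really is a \emph{complete} invariant for the extension algebra. The presence of $[1_A]$ here is essential and not a formality: for fixed six-term groups, the classes in $\Ext(B)=\kk^1(B,\Cz)\cong\Zz/|k|\Zz$ that produce $K_0(A)\cong\Zz$ are exactly the generators, of which there are $\varphi(|k|)$, and (since $\mathrm{Aut}(B)$ fixes the generator $[1_B]$ of $K_0(B)$ and, by the UCT, acts trivially on $\kk^1(B,\Cz)$) these need not give isomorphic total algebras; it is the position of $[1_A]$ that isolates the correct one. Establishing this completeness is where one leans on the full non-simple classification machinery --- or, alternatively, argues by hand through absorption and the UCT --- and the bookkeeping of the unit class is the delicate part; everything else is routine $K$-theory together with the constructions recalled in Section \ref{preliminaries}.
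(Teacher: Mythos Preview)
Your proposal is correct and follows essentially the same strategy as the paper: observe that $\cK$ is essential, invoke a classification-of-extensions result for uniqueness, and use the semigroup C*-algebras $C^*(A_\Gamma^+)$ with $\Gamma$ finite co-irreducible and $\chi(\Gamma)=k$ for existence. The paper's proof is three lines, citing \cite[Corollary~4.22]{segrer:scecc} directly for uniqueness, whereas you unpack that machinery by first pinning down the quotient $B$ via Kirchberg--Phillips and then arguing through absorption and the ordered six-term sequence with unit class; your closing paragraph on why the unit class is genuinely needed (the $\varphi(|k|)$ generators of $\kk^1(B,\Cz)$ and the trivial $\mathrm{Aut}(B)$-action) is a nice gloss on what the cited corollary is doing, but conceptually there is no daylight between the two approaches.
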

\begin{proof}
Note that $\cK$ is an essential ideal of $A$ (i.e., every nonzero ideal of $A$ has a nontrivial intersection with $\cK$) since $A$ is unital and $A / \cK$ is simple.  Uniqueness follows from \cite[Corollary 4.22]{segrer:scecc}. For existence, we note that when $\Gamma$ is a finite and co-irreducible graph with $|\Gamma|>1$ and $\chi(\Gamma)=k$, all properties are met as noted in Section \ref{preliminaries}.
\end{proof}

When specifying the map $K_0(\cK)\to K_0(A)$ above we let the unit of the leftmost copy of $\Zz$ denote the class of a minimal projection of $\Zz$.

\begin{definition}\label{first}
The unique C*-algebras satisfying (1),(2) and (3) are denoted $E_{|k|+1}^{\sgn (k)}$. The unique 
C*-algebra satisfying (1),(2) and (3') is denoted $E_{1}^{0}$. The quotient $E_1^0/\cK$ is denoted $\cO_1$.
\end{definition}

Our notation has been chosen to fit the notation  $E_n^k$ for the {extension algebras} of $\cO_n$ studied in \cite{xfsl:eaca}. With our name $\cO_1$ for the appropriately chosen Kirchberg algebra, we have
\[
\xymatrix{
0\ar[r]&\cK\ar[r]^-{\iota}& E^k_n\ar[r]^-{\pi}&\cO_n\ar[r]&0}
\]
for any $k\in\{-1,0,1\}$ and $n\in\Nz$, provided $k=0$ precisely when $n=1$.

\begin{lemma}\label{notconfuse}
$E^k_n\cong E^{k'}_{n'}$ only when $n=n'$ and $k=k'$. 
$E^k_n\otimes\cK \cong E^{k'}_{n'}\otimes \cK$ precisely when $n=n'$.
\end{lemma}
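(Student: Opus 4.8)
The plan is to distinguish these algebras using the $K$-theoretic invariants already assembled in Section \ref{preliminaries} and Theorem \ref{basicunique}, together with the fact that $\cK$ is an essential ideal in each $E_n^k$. First I would observe that the ideal $\cK$ is recovered intrinsically: since $E_n^k$ is unital and $E_n^k/\cK=\cO_n$ is simple, $\cK$ is the unique essential ideal, and in fact the only proper nonzero ideal (the quotients $\cO_n$ being simple). Hence any $*$-isomorphism $E_n^k\cong E_{n'}^{k'}$ restricts to an isomorphism $\cK\cong\cK$ and descends to an isomorphism $\cO_n\cong\cO_{n'}$. For $n,n'\ge 2$ this already forces $n=n'$ by the classification of Cuntz algebras (distinguished by $K_0(\cO_n)=\Zz/(n-1)\Zz$); the edge cases $\cO_1=E_1^0/\cK$ and $\cO_\infty$ are handled separately, noting that $K_1(\cO_1)\cong\Zz$ whereas $K_1(\cO_n)=0$ for finite $n\ge 2$, and $K_0(\cO_\infty)=\Zz$ is distinct from $\Zz/(n-1)\Zz$ for $n$ finite $\ge 2$, so $n=n'$ in all cases.

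Next, with $n=n'$ fixed, I would pin down $k$ from the connecting data of the six-term exact sequence of $0\to\cK\to E_n^k\to\cO_n\to 0$. The isomorphism $E_n^k\cong E_n^{k'}$ induces a commuting ladder between the two six-term sequences, with the outer vertical maps being isomorphisms of $K_*(\cK)$ and $K_*(\cO_n)$; by naturality the map $K_0(\cK)\to K_0(E_n^k)$ must match $K_0(\cK)\to K_0(E_n^{k'})$ up to these automorphisms. Since $K_0(\cK)\cong\Zz$ and $K_0(E_n^k)\cong\Zz$ with $[1]=1$, and since the only automorphisms of $\Zz$ are $\pm 1$, the map is multiplication by $k$ up to sign; but for $n\ge 2$ the target $K_0(\cO_n)=\Zz/(n-1)\Zz$ together with exactness forces $|k|\le 1$ and, by Theorem \ref{basicunique} and Definition \ref{first}, $k\in\{-1,0,1\}$ with $k=0$ iff $n=1$. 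The automorphism of $\Zz=K_0(E_n^k)$ fixes $[1]=1$, hence is the identity, so the sign of $k$ is an honest invariant and $k=k'$. (Concretely, $k$ and $-k$ are distinguished because the identification of $K_0(\cK)$ with $\Zz$ via the class of a minimal projection, as specified after Theorem \ref{basicunique}, is canonical up to no sign once we also fix $[1]$; alternatively one invokes \cite[Corollary 4.22]{segrer:scecc} directly, which already yields uniqueness of $E_n^k$ given the full sequence including the map.)

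For the second assertion, tensoring with $\cK$ (equivalently, stabilizing) kills the distinction recorded by $[1]$: I would compute the six-term sequence of $0\to\cK\otimes\cK\to E_n^k\otimes\cK\to\cO_n\otimes\cK\to 0$ and note that $K_*(\cK\otimes\cK)\cong K_*(\cK)$, $K_0(\cO_n\otimes\cK)\cong\Zz/(n-1)\Zz$ for $n\ge2$, while now the relevant map $K_0(\cK\otimes\cK)\to K_0(E_n^k\otimes\cK)\cong\Zz$ is still multiplication by $|k|$, but $[1]$ is no longer available as a rigidifying class — the stable algebra is non-unital. So $E_n^k\otimes\cK$ and $E_n^{-k}\otimes\cK$ should be isomorphic (indeed $E_n^1\otimes\cK\cong E_n^{-1}\otimes\cK$ via an isomorphism reversing the generator of $K_0(\cK)$), while different $n$ remain distinguished by the simple quotient $\cO_n$ exactly as before. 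To organize this cleanly I would again appeal to the classification result \cite[Corollary 4.22]{segrer:scecc} (or its stable analogue) applied to the stable algebras, whose invariant is the stabilized filtered $K$-theory without the unit class: it records $n$ (via the ideal/quotient structure and $K_0(\cO_n)$) and $|k|$ (via the order of the connecting map, which is insensitive to sign once the unit is forgotten), and nothing more.

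The main obstacle I anticipate is the sign bookkeeping in the first part: showing that $E_n^{1}\not\cong E_n^{-1}$ requires knowing that the canonical identification $K_0(\cK)\cong\Zz$ together with $[1_{E_n^k}]=1$ leaves no room for an orientation-reversing isomorphism, i.e. that an isomorphism $E_n^1\cong E_n^{-1}$ cannot simultaneously fix the unit and flip the minimal-projection class. This is exactly the content of the ordered/pointed refinement of the classification in \cite{segrer:scecc}, so the real work is to check that the hypotheses of \cite[Corollary 4.22]{segrer:scecc} apply to $E_n^k$ — which they do, since $\cK$ is an essential ideal, $\cO_n$ is a UCT Kirchberg algebra, and these are precisely the algebras built from co-irreducible graphs in Section \ref{preliminaries} — after which both halves of the lemma are immediate consequences of that classification, unital in the first case and stable in the second.
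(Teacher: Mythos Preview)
Your approach is essentially the paper's: compare the six-term exact sequences, use that the unit class pins down the middle isomorphism, and observe that stabilization removes this constraint. However, there is a genuine sign slip in your stable argument that is worth correcting, because it is exactly the point the paper emphasizes.

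You write that $E_n^{1}\otimes\cK\cong E_n^{-1}\otimes\cK$ ``via an isomorphism reversing the generator of $K_0(\cK)$''. That cannot happen: any $*$-isomorphism $\cK\to\cK$ (or $\cK\otimes\cK\to\cK\otimes\cK$) sends minimal projections to minimal projections, so the induced map on $K_0(\cK)\cong\Zz$ must be $+1$; equivalently, the positive cone of $K_0(\cK)$ is preserved. You already said as much in your parenthetical for the unital case (``the identification of $K_0(\cK)$ with $\Zz$ via the class of a minimal projection \dots\ is canonical up to no sign''), so the stable claim contradicts your own earlier observation. The paper makes exactly this point: the leftmost vertical map in the ladder is forced to be $+1$ by the order on $K_0(\cK)$, hence the middle vertical map is forced to be $-1$. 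In the unital case this fails to preserve $[1]$ and rules out an isomorphism; in the stable case there is no unit to preserve, and because $E_n^{\pm1}$ is properly infinite the order on $K_0(E_n^{\pm1})$ is all of $\Zz$, so $-1$ is admissible and the classification result (the paper cites \cite{segrer:cecc}) produces the isomorphism. So the generator that gets reversed is that of $K_0(E_n^k)$, not of $K_0(\cK)$; similarly, the connecting map is multiplication by the signed integer $\pm(n-1)$, not by $|k|$ as you wrote. Once you make this correction the argument goes through and matches the paper's.

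Two minor remarks: your digression about $\cO_\infty$ is unnecessary, since the lemma concerns only the $E_n^k$ of Definition~\ref{first}; and your first step (recovering $n$ from the quotient $\cO_n$) is fine but the paper does it even more directly from the shape of the six-term sequence.
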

\begin{proof}
Since the six-term exact sequences are as specified in (3) or (3') of Theorem \ref{basicunique}, stable isomorphism can only occur if $n=n'$, and hence we only need to check that for $n>0$, we have
$E^1_n\not \cong E^{-1}_{n}$, yet $E^1_n\otimes\cK\cong E^{-1}_{n}\otimes\cK$.

We note that the only two options for an isomorphism among the six-term exact sequences in this case are given as
\[
\xymatrix{
{\Zz}\ar^-{n}[r]\ar[d]^-{\pm1}&{\Zz}\ar[r]\ar[d]^-{\mp1}&\Zz/n\Zz\ar[d]\\
{\Zz}\ar_-{-n}[r]&{\Zz}\ar[r]&\Zz/n\Zz}
\]
and that we must choose $+1$ as the left most isomorphism to preserve the positive cone of $K_0(\cK)$. Thus, an isomorphism is ruled out as it would fail to send the class of the unit of $E^1_n$ to the unit of $E^{-1}_n$, but an isomorphism after stabilization is guaranteed by, e.g., \cite{segrer:cecc}.
\end{proof}

The following result follows directly from \S~\ref{coirrco}, \S~\ref{K} and Theorem~\ref{basicunique}.
\begin{theorem}\label{list}
When $\Gamma$ is a co-irreducible graph, $C^*(A_{\Gamma}^+)$ is one of the C*-algebras
$
\cT,E^k_n,\cO_\infty
$
according to
\begin{enumerate}
\item If $|\Gamma|=1$, $C^*(A_{\Gamma}^+)\cong \cT$
\item If $1<|\Gamma|<\infty$, $C^*(A_{\Gamma}^+)\cong E_{1+|\chi(\Gamma)|}^{\sgn\chi(\Gamma)}$
\item If $|\Gamma|=\infty$, $C^*(A_{\Gamma}^+)\cong \cO_\infty$
\end{enumerate}
\end{theorem}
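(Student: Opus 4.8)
The plan is to show this theorem is essentially a bookkeeping corollary of the structural facts already assembled in Section~\ref{preliminaries}, combined with the uniqueness statement of Theorem~\ref{basicunique} and Definition~\ref{first}. I would organize the argument by the three mutually exclusive cases on $\abs{\Gamma}$, exactly as in the statement.

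For case (1), when $\abs{\Gamma}=1$, the monoid $A_\Gamma^+$ is just $\Nz$ and the identification $C^*(A_\Gamma^+)\cong\cT$ is recalled explicitly at the end of \S~\ref{coirrco}, so nothing remains to prove. For case (3), when $\abs{\Gamma}=\infty$ and $\Gamma$ is co-irreducible, \S~\ref{coirrco} tells us $C^*(A_\Gamma^+)$ is itself a unital Kirchberg algebra satisfying the UCT, and \S~\ref{K} gives $K_*(C^*(A_\Gamma^+))\cong K_*(\Cz)$ with $K_0$ generated by $[1]$. By the Kirchberg--Phillips classification theorem, a unital UCT Kirchberg algebra with $K_0\cong\Zz$ generated by the unit and $K_1=0$ is unique, hence isomorphic to $\cO_\infty$; I would cite this directly. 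The main content is case (2): for $1<\abs{\Gamma}<\infty$ co-irreducible, I would verify that $A:=C^*(A_\Gamma^+)$ satisfies hypotheses (1) and (2) of Theorem~\ref{basicunique} --- property (1) is the statement from \S~\ref{coirrco} that $\cK$ is an ideal with $A/\cK=C^*_Q(A_\Gamma^+)$ a UCT Kirchberg algebra, and (2) is from \S~\ref{K} --- and then that its six-term exact sequence is the one displayed in (3), resp.\ (3$'$), of Theorem~\ref{basicunique} with $k=\chi(\Gamma)$. The latter is precisely the computation carried out in \S~\ref{K}: the connecting map $K_0(\cK)\to K_0(A)$ sends $[e]$ to $\chi(\Gamma)[1]$, and $K_1(\cK)=K_1(A)=0$, which forces the rest of the diagram. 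When $\chi(\Gamma)\neq 0$ this matches diagram (3) for $k=\chi(\Gamma)$, and uniqueness from Theorem~\ref{basicunique} together with Definition~\ref{first} gives $A\cong E^{\sgn\chi(\Gamma)}_{1+\abs{\chi(\Gamma)}}$; when $\chi(\Gamma)=0$ the computation in \S~\ref{K} gives $K_1(C^*_Q(A_\Gamma^+))\cong\Zz$, which is exactly diagram (3$'$), so $A\cong E^0_1$, consistent with the formula since $\sgn 0=0$ and $1+\abs{0}=1$.

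The only subtlety --- and the step I expect to require the most care --- is matching the \emph{ordered} and \emph{pointed} data across the isomorphism, not merely the unordered six-term sequence: one must check the generator of $K_0(\cK)$ is sent to a genuine minimal-projection class (positivity of the cone) and that the unit $[1_A]$ corresponds to $1\in\Zz=K_0(A)$, so that the uniqueness in Theorem~\ref{basicunique}, which is stated for this rigidified invariant, actually applies. Both points were in fact already arranged in \S~\ref{K} (the generator $e=1-\bigvee_v S_vS_v^*$ is a rank-one projection, and $[1]$ generates $K_0$), and the remark following Theorem~\ref{basicunique} fixes the normalization of the $K_0(\cK)\to K_0(A)$ map, so the verification is short. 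I would close by noting that the existence half of Theorem~\ref{basicunique} was itself deduced from exactly these $C^*(A_\Gamma^+)$, so case (2) is really just reading Theorem~\ref{basicunique} and Definition~\ref{first} backwards.
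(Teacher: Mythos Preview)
Your proposal is correct and matches the paper's approach exactly: the paper's own ``proof'' is the single sentence ``The following result follows directly from \S~\ref{coirrco}, \S~\ref{K} and Theorem~\ref{basicunique},'' and you have faithfully unpacked precisely those references, including the Kirchberg--Phillips step for case (3) and the verification that the six-term data in \S~\ref{K} match the normalized diagrams (3)/(3$'$) of Theorem~\ref{basicunique}.
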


We note that by the information already noted on the ideal structures in combination with Lemma \ref{notconfuse}, the C*-algebras appearing are not mutually isomorphic, and hence we have a complete classification by the cardinality of $\Gamma$ and the Euler characteristic in the co-irreducible case.

 In preparation for the general case we now study isomorphisms between various tensor products amongst the relevant extension algebras and some of their quotients. For this, we will need:
 
\setlength{\parskip}{0cm}
\btheo
\label{class_pi-orthideals}
Let $A_i$, $i=1,2$, be unital C*-algebras whose proper ideals are precisely given by $(0)$, $I_i$, $J_i$ and $I_i \oplus J_i$. We assume that $I_i$ and $J_i$ are UCT Kirchberg algebras, and the quotients $A_i / (I_i \oplus J_i)$ are also UCT Kirchberg algebras.

Let $\alpha_I: \ K_*(I_1) \cong K_*(I_2)$, $\alpha_J: \ K_*(J_1) \cong K_*(J_2)$, $\alpha_{I \oplus J}: \ K_*(I_1 \oplus J_1) \cong K_*(I_2 \oplus J_2)$, $\beta: \ K_*(A_1) \cong K_*(A_2)$, $\gamma_I: \ K_*(A / I_1) \cong K_*(A / I_2)$, $\gamma_J: \ K_*(A / J_1) \cong K_*(A / J_2)$, and $\gamma_{I \oplus J}: \ K_*(A_1 / (I_1 \oplus J_1)) \cong K_*(A_2 / (I_2 \oplus J_2))$ be isomorphisms, with $\beta$ preserving the $K_0$-classes of the units and $\alpha_{I \oplus J} = \alpha_I \oplus \alpha_J$ (under the canonical isomorphism $K_*(I_i \oplus J_i) \cong K_*(I_i) \oplus K_*(J_i)$).

Furthermore, we assume that these isomorphism are compatible with the K-theoretic six term exact sequences attached to
\begin{align*}
0 \to I_i \to A_i \to A_i / I_i \to 0, \qquad & 0 \to J_i \to A_i \to A_i / J_i \to 0, \\
0\to I_i \oplus J_i \to A_i \to A_i / (I_i \oplus J_i) \to 0,\qquad  & 0 \to J_i \to A_i / I_i \to A_i / (I_i \oplus J_i) \to 0 \\
\text{and} \qquad & 
\end{align*} 
\[
0 \to I_i \to A_i / J_i \to A_i / (I_i \oplus J_i) \to 0.
\]

Then there exists an isomorphism $\varphi: \ A_1 \cong A_2$ which induces $\alpha_I$, $\alpha_J$, $\alpha_{I \oplus J}$, $\beta$, $\gamma_I$, $\gamma_J$ and $\gamma_{I \oplus J}$ in K-theory. 
\etheo
\setlength{\parskip}{0.5cm}

\begin{proof}
Combine \cite[Folgerung~4.3]{ek:nkmkna} and \cite[Theorem~1.3]{rbmk:uctcfts} with \cite[Theorem~2.1]{grer:rccconiII} or \cite[Theorem~3.3]{segrer:scecc}.
\end{proof}

\blemma
\label{OinfxE}
For every $n \geq 2$, we have $\cO_{\infty} \otimes E_n^{+1} \cong \cO_{\infty} \otimes E_n^{-1}$.
\elemma
\bproof
Both $\cO_{\infty} \otimes E_n^{+1}$ and $\cO_{\infty} \otimes E_n^{-1}$ are unital C*-algebras with unique ideal isomorphic to $\cO_{\infty} \otimes \cK$ and corresponding quotient isomorphic to $\cO_{\infty} \otimes \cO_n \cong \cO_n$. The K-theoretic six term exact sequences for $0 \to \cO_{\infty} \otimes \cK \to \cO_{\infty} \otimes E_n^{+1} \to \cO_n \to 0$ and $0 \to \cO_{\infty} \otimes \cK \to \cO_{\infty} \otimes E_n^{-1} \to \cO_n \to 0$ look as follows:
\bgloz
\xymatrix{
  \Zz \ar[r] & \Zz \ar[r] & \Zz / (n-1) \Zz \ar[d] \\
  {0} \ar[u] & \ar[l] {0} & \ar[l] {0}
  }
\egloz
where $\Zz \cong K_0(\cO_{\infty} \otimes \cK)$ is generated by $[1 \otimes e]$ for a minimal projection $e \in \cK$ and the unit $1$ of $\cO_{\infty}$, and $\Zz \cong K_0(\cO_{\infty} \otimes E_n^{\pm 1})$ is generated by the class of the unit. The only difference is that for $E_n^{+1}$, the homomorphism $\Zz \to \Zz$ is given by $z [1 \otimes e] \ma (n-1) z [1]$, whereas for $E_n^{-1}$, the homomorphism $\Zz \to \Zz$ is given by $z [1 \otimes e] \ma - (n-1) z [1]$ (for $z \in \Zz$). We now apply \cite[Theorem 2.2]{grer:rccconiII} to $I_i = \cO_{\infty} \otimes \cK$, $A_1 = \cO_{\infty} \otimes E_n^{+1}$, $A_2 = \cO_{\infty} \otimes E_n^{-1}$, $Q_i = \cO_n$ and the homomorphisms $\alpha  = - \id_{K_0(\cO_{\infty} \otimes \cK)}$, $\beta: \ K_0(\cO_{\infty} \otimes E_n^{+1}) \to K_0(\cO_{\infty} \otimes E_n^{-1}), \ z [1] \ma z [1]$ (for $z \in \Zz$), $\gamma = \id_{K_0(\cO_n)}$. It is then obvious that all the assumptions in \cite{grer:rccconiII} are satisfied, and we conclude that $\cO_{\infty} \otimes E_n^{+1} \cong \cO_{\infty} \otimes E_n^{-1}$.
\eproof

Now recall that we have introduced the extension algebra $E_1^0$ in Theorem \ref{basicunique}. The C*-algebra $E_1^0 \otimes E_n^{+1}$ ($n \geq 2$) contains the ideal $\cK \otimes \cK \cong \cK$, and we denote the corresponding quotient by $Q^+$. Obviously, the primitive ideals of $Q^+$ are given by $\cK \otimes \cO_n$, $\cO_1 \otimes \cK$ and $\cK \otimes \cO_n \oplus \cO_1 \otimes \cK$. From the six term exact sequence in K-theory for $0 \to \cK \to E_1^0 \otimes E_n^{+1} \to Q^+ \to 0$, we obtain $K_0(Q^+) \cong \Zz \cong K_1(Q^+)$, where $K_{0} ( Q^{+} )$ is generated by $[ 1_{ Q^{+} } ]$. All this also holds for the quotient $Q^-$ of $E_1^0 \otimes E_n^{-1}$ by the ideal $\cK \otimes \cK \cong \cK$. 
\blemma
\label{Q+Q-}
$Q^+$ and $Q^-$ are isomorphic. Moreover, there exists an automorphism of $Q^+$ which induces $\id_{\Zz}$ on $K_0$ and $- \id_{\Zz}$ on $K_1$.
\elemma
\bproof
The first statement is an application of Theorem~\ref{class_pi-orthideals} to $A_1 = Q^+$, $A_2 = Q^-$, $I_1 = \cK \otimes \cO_n \triangleleft \ Q^+$, $J_1 = \cO_1 \otimes \cK \triangleleft \ Q^+$, $I_2 = \cK \otimes \cO_n \ \triangleleft \ Q^-$, $J_2 = \cO_1 \otimes \cK \ \triangleleft \ Q^-$. Namely, it is straightforward to check that it is possible to choose $\alpha_I$, $\alpha_J$, $\alpha_{I \oplus J}$, $\beta$, $\gamma_I$, $\gamma_J$, and $\gamma_{I \oplus J}$ with all the desired properties in Theorem~\ref{class_pi-orthideals}.

The second statement follows in a similar way by applying Theorem~\ref{class_pi-orthideals} to $A_1 = A_2 = Q^+$, $I_1 = I_2 = \cK \otimes \cO_n \triangleleft \ Q^+$, $J_1 = J_2 = \cO_1 \otimes \cK \triangleleft \ Q^+$.
\eproof

\blemma
\label{E10xE}
For every $n \geq 2$, we have $E_1^0 \otimes E_n^{+1} \cong E_1^0 \otimes E_n^{-1}$.
\elemma
\bproof
By the previous lemma, we can identify $Q^+$ and $Q^-$ (we use the same notation as in the previous lemma) so that we can view $E_1^0 \otimes E_n^{+1}$ and $E_1^0 \otimes E_n^{-1}$ as extensions of $Q^+$:
\bgln
\label{ExtE+}
  && 0 \to \cK \to E_1^0 \otimes E_n^{+1} \to Q^+ \to 0 \\
\label{ExtE-}
  && 0 \to \cK \to E_1^0 \otimes E_n^{-1} \to Q^+ \to 0.
\egln
Again by the previous lemma, we can choose the identification $Q^+ \cong Q^-$ in such a way that for a fixed choice of isomorphisms $K_1(Q^+) \cong \Zz$, $K_0(\cK) \cong \Zz$, the index maps for both extensions \eqref{ExtE+} and \eqref{ExtE-} coincide. Now \cite[Theorem~2]{lgbmd:ecq} implies that \eqref{ExtE+} and \eqref{ExtE-} give the same class in $\Ext_s(Q^+)$. The reason is that $\operatorname{Ext}(K_0(Q^+),[1_{Q^+}])$ is the trivial group as $K_0(Q^+) \cong \Zz$ and $[1_{Q^+}]$ is a generator of $K_0(Q^+) \cong \Zz$. So the short exact sequence in \cite[Theorem~2]{lgbmd:ecq} tells us that two extensions of $Q^+$ by $\cK$ give the same class in $\Ext_s(Q^+)$ if their index maps coincide. But this is the case for \eqref{ExtE+} and \eqref{ExtE-} by construction. Hence $E_1^0 \otimes E_n^{+1} \cong E_1^0 \otimes E_n^{-1}$ by \cite[\S~3.2]{kkjkt:ek}.
\eproof

For $m, n \geq 2$, the C*-algebra $E_m^{+1} \otimes E_n^{-1}$ contains the ideal $\cK \otimes \cK \cong \cK$, and we denote the corresponding quotient by $Q^{+-}$. Obviously, the primitive ideals of $Q^{+-}$ are given by $\cK \otimes \cO_n$, $\cO_m \otimes \cK$ and $\cK \otimes \cO_n \oplus \cO_m \otimes \cK$. From the six term exact sequence in K-theory for $0 \to \cK \to E_m^{+1} \otimes E_n^{-1} \to Q^{+-} \to 0$, we obtain $K_0(Q^{+-}) \cong \Zz / (m-1)(n-1) \Zz$, with the class of the unit being a generator, and $K_1(Q^{+-}) \cong \gekl{0}$. All this also holds for the quotient $Q^{-+}$ of $E_m^{-1} \otimes E_n^{+1}$ by the ideal $\cK \otimes \cK \cong \cK$.
\blemma
\label{Q+-Q-+}
$Q^{+-}$ and $Q^{-+}$ are isomorphic.
\elemma
\bproof
As Lemma~\ref{Q+Q-}, this is an application of Theorem~\ref{class_pi-orthideals} to $A_1 = Q^{+-}$, $A_2 = Q^{-+}$, $I_1 = \cK \otimes \cO_n \triangleleft \ Q^{+-}$, $J_1 = \cO_m \otimes \cK \triangleleft \ Q^{+-}$, $I_2 = \cK \otimes \cO_n \ \triangleleft \ Q^{-+}$, $J_2 = \cO_m \otimes \cK \ \triangleleft \ Q^{-+}$. Namely, it is straightforward to check that it is possible to choose $\alpha_I$, $\alpha_J$, $\alpha_{I \oplus J}$, $\beta$, $\gamma_I$, $\gamma_J$, and $\gamma_{I \oplus J}$ with all the desired properties in Theorem~\ref{class_pi-orthideals}.
\eproof
\blemma
\label{E+-E-+}
We have $E_m^{+1} \otimes E_n^{-1} \cong E_m^{-1} \otimes E_n^{+1}$.
\elemma
\bproof
By the previous lemma, we can identify $Q^{+-}$ and $Q^{-+}$ (using the same notation as in the previous lemma) so that we can view $E_m^{+1} \otimes E_n^{-1}$ and $E_m^{-1} \otimes E_n^{+1}$ as extensions of $Q^{+-}$:
\bgln
\label{ExtE+-}
  && 0 \to \cK \to E_m^{+1} \otimes E_n^{-1} \to Q^{+-} \to 0 \\
\label{ExtE-+}
  && 0 \to \cK \to E_m^{-1} \otimes E_n^{+1} \to Q^{+-} \to 0.
\egln
Since $\operatorname{Hom}(K_1(Q^{+-}),\Zz) = \gekl{0}$, \cite[Theorem~2]{lgbmd:ecq} yields $\operatorname{Ext}((K_0(Q^{+-}),[1]), \Zz) \cong Ext_s(Q^{+-})$. Hence \eqref{ExtE+} and \eqref{ExtE-} give the same class in $\Ext_s(Q^{+-})$. The reason is that the exact sequences in $K_0$ for \eqref{ExtE+-} and \eqref{ExtE-+} clearly give rise to the same class in $\operatorname{Ext}((K_0(Q^{+-}),[1]), \Zz)$. Hence $E_m^{+1} \otimes E_n^{-1} \cong E_m^{-1} \otimes E_n^{+1}$ by \cite[\S~3.2]{kkjkt:ek}.
\eproof

In an entirely analogous way, we get
\blemma
\label{E++E--}
For all $m, n \geq 2$, we have $E_m^{+1} \otimes E_n^{+1} \cong E_m^{-1} \otimes E_n^{-1}$.
\elemma

\section{Classification of semigroup C*-algebras}

We are now ready to address the general classification problem for C*-algebras of the form $C^*(A_{\Gamma}^+)$. We begin with notation:

\begin{definition}\label{not}
Let $\Gamma$ be a graph with co-irreducible components $\Gamma_i = (V_i,E_i)$. We set 
\begin{gather*}
t(\Gamma) = \# \menge{\Gamma_i}{\abs{V_i} = 1}\\
o(\Gamma) = \# \menge{\Gamma_i}{\abs{V_i} = \infty},
\end{gather*}
and for every $n \in \Zz$
\[
N_n(\Gamma) = \# \menge{\Gamma_i}{1 < \abs{V_i} < \infty, \, \chi(\Gamma_i) = n}.
\]
\end{definition}

\btheo
\label{general-theo}
Let $\Gamma$ and $\Lambda$ be two  graphs. The semigroup C*-algebras $C^*(A_{\Gamma}^+)$ and $C^*(A_{\Lambda}^+)$ of the Artin monoids for $\Gamma$ and $\Lambda$ are stably isomorphic if and only if the following conditions hold:
\begin{enumerate}
\item[(i)] $t(\Gamma) = t(\Lambda)$;
\item[(ii)] 
$N_{-n}(\Gamma) + N_n(\Gamma) = N_{-n}(\Lambda) + N_n(\Lambda)$ for any $n\in\Zz$;
\item[(iii)] $\sum_{n \in \Zz} N_n(\Gamma) =\infty$ or $
\min(o(\Gamma),1)=\min(o(\Lambda),1)$.
\end{enumerate}
They are isomorphic if and only if further
\begin{enumerate}
\item[(iv)]
If $\sum_{n \in\Zz} N_n(\Gamma) < \infty$, $o(\Gamma)=0$ and $N_0(\Gamma)=0$,
then
$$\sum_{n=1}^{\infty} N_{-n}(\Gamma) \equiv \sum_{n=1}^{\infty} N_{-n}(\Lambda) \mod 2$$
\end{enumerate}
holds.
\etheo

\bremark
Note that when (ii) holds, all the conditions in (iii) are symmetric in $\Gamma$ and $\Lambda$. Similarly, when (ii) and (iii) hold, so are the conditions in (iv).

Moreover, if (ii) holds, then $\sum_{n=1}^{\infty} N_{-n}(\Gamma) \equiv \sum_{n=1}^{\infty} N_{-n}(\Lambda) \mod 2$ is equivalent to $\sum_{n=1}^{\infty} N_n(\Gamma) \equiv \sum_{n=1}^{\infty} N_n(\Lambda) \mod 2$. This explains condition (2) in Theorem~\ref{theo_finite-graphs}.
\eremark

For the proof of Theorem~\ref{general-theo}, we need some preparation. Given a graph $\Gamma$ with co-irreducible components $\Gamma_i = (V_i,E_i)$, let $\Gamma'$ be the graph we get from $\Gamma$ by removing all the co-irreducible components $\Gamma_i$ with $\abs{V_i} = 1$ and the corresponding edges. We then have a canonical isomorphism $C^*(A_{\Gamma'}^+) \cong \bigotimes_{\menge{\Gamma_i}{\abs{V_i} > 1}} C^*(A_{\Gamma_i}^+)$.

\begin{lemma}\label{pret}
There is a primitive ideal $I'$ of $C^*(A_{\Gamma}^+)$ such that $\Prim(C^*(A_{\Gamma}^+) / I')$ does not continuously surject onto $\Prim(\cT)$ and which is minimal among all the primitive ideals having this property, and we have
\[
 C^*(A_{\Gamma}^+) / I' \cong C^*(A_{\Gamma'}^+).
 \]
 \end{lemma}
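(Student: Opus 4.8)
The plan is to locate the ideal $I'$ explicitly inside the tensor-product picture $C^*(A_\Gamma^+) \cong \bigotimes_i C^*(A_{\Gamma_i}^+)$ and then identify the quotient by using the description of $\Prim(C^*(A_\Gamma^+))$ as the product $\prod_i \Prim(C^*(A_{\Gamma_i}^+))$ from \S\ref{prim}. Observe first that the only co-irreducible components whose primitive ideal spaces continuously surject onto $\Prim(\cT)$ are those with $\abs{V_i}=1$: when $1<\abs{V_i}<\infty$ the space $\Prim(C^*(A_{\Gamma_i}^+))$ is the two-point Sierpi\'nski space, and when $\abs{V_i}=\infty$ it is a single point, neither of which admits a continuous surjection onto the disjoint union of a point and a circle (for topological reasons: the image of a continuous map into $\Prim(\cT)$ from a finite space is finite, and a two-point image cannot be all of $\Prim(\cT)$, which has infinitely many points). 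Thus morally $I'$ must "kill exactly the Toeplitz tensor factors."

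First I would define $I'$ concretely as the ideal of $C^*(A_\Gamma^+)$ generated by $\{\bigotimes_j J_{ij}\}_{\abs{V_i}=1}$, where $J_{ij} = C^*(A_{\Gamma_j}^+)$ for $j\neq i$ and $J_{ii}$ is the unique nontrivial ideal $\cK \triangleleft \cT$ (in the notation of \S\ref{prim}, $I'$ corresponds to the family $(I_i)$ with $I_i=\cK$ when $\abs{V_i}=1$ and $I_i = 0$ otherwise). Then I would check that $C^*(A_\Gamma^+)/I'$ is the tensor product over the components with $\abs{V_i}=1$ of $\cT/\cK \cong C(\Tz)$, tensored with $\bigotimes_{\abs{V_i}>1} C^*(A_{\Gamma_i}^+)$; since $\cT/\cK$ is unital and abelian this is $C(\Tz^{t(\Gamma)}) \otimes C^*(A_{\Gamma'}^+)$. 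Wait -- this is not $C^*(A_{\Gamma'}^+)$ on the nose, so the right choice of $I'$ must instead be the ideal generated by $1 - S_vS_v^*$ for each Toeplitz factor's generator, i.e.\ the ideal whose quotient collapses each $\cT$ factor to $\Cz = \cT/\cT \cong$ the quotient by the \emph{augmentation} ideal. So I would instead take $I'$ to be generated by $\{\bigotimes_j J'_{ij}\}_{\abs{V_i}=1}$ with $J'_{ii}$ the (maximal) ideal of $\cT$ with $\cT/J'_{ii}\cong\Cz$, giving $C^*(A_\Gamma^+)/I' \cong \Cz \otimes C^*(A_{\Gamma'}^+) \cong C^*(A_{\Gamma'}^+)$ as claimed. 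This primitive ideal corresponds to the point $(I_i)$ with $I_i$ the point-component of $\Prim(\cT)$ for $\abs{V_i}=1$ and $I_i=0$ otherwise.

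Next I would verify the two required properties of $I'$. For the surjectivity-obstruction property: $\Prim(C^*(A_\Gamma^+)/I') \cong \prod_{\abs{V_i}>1}\Prim(C^*(A_{\Gamma_i}^+))$ under the identification of \S\ref{prim}, and every factor here is either a point or the two-point Sierpi\'nski space, so by the topological argument above there is no continuous surjection onto $\Prim(\cT)$. For minimality: if $J \subsetneq I'$ is a primitive ideal strictly smaller -- equivalently, $\Prim(C^*(A_\Gamma^+)/J)$ strictly contains $\Prim(C^*(A_\Gamma^+)/I')$ as a subspace, keeping at least one Toeplitz factor "partially alive" in the sense that one coordinate $I_i$ with $\abs{V_i}=1$ is not the whole point-component -- then that surviving $\cT$-coordinate supplies a continuous surjection $\Prim(C^*(A_\Gamma^+)/J) \onto \Prim(\cT)$ by composing the coordinate projection with an appropriate continuous map; this is where I would need to be a little careful about which sub-quotients of $\cT$ can arise and check each admits no such surjection only when $\cT$ is fully collapsed.

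The main obstacle I anticipate is precisely the minimality claim -- making rigorous that "not fully collapsing a Toeplitz factor" forces a continuous surjection onto $\Prim(\cT)$. The cleanest route is: a primitive ideal $J$ with $J\subseteq I'$ corresponds, via \S\ref{prim}, to a family $(I_i)$ of primitive ideals; if $J \neq I'$ then for some $i_0$ with $\abs{V_{i_0}}=1$ the coordinate $I_{i_0}$ lies in the circle part of $\Prim(\cT)$ rather than being the point, and then $\Prim(C^*(A_\Gamma^+)/J)$ retains the full $\Prim(\cT/I_{i_0})$ along that coordinate -- but $\cT/I_{i_0}\cong C(\Tz)$ has $\Prim \cong \Tz$, which still does not surject onto $\Prim(\cT)$. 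So in fact the obstruction is stronger: $I'$ as I defined it is not minimal, and the genuinely minimal ideal is the one keeping \emph{one} Toeplitz factor as $\cT$ itself. I would therefore reconsider: the correct $I'$ is generated by the $\bigotimes_j J_{ij}$ over all but one of the $\abs{V_i}=1$ components together with the full collapse of that last one -- no: re-reading, the quotient must equal $C^*(A_{\Gamma'}^+)$, which has \emph{no} Toeplitz factor, so all $t(\Gamma)$ of them must be collapsed to $\Cz$, and minimality is then automatic because any strictly smaller primitive ideal leaves some $\cT$-factor with quotient a nontrivial sub-quotient $B$ of $\cT$, and one checks $\Prim(B)$ still maps onto $\Prim(\cT)$ only when $B = \cT$; since the minimal such $B$ containing a strict sub-collapse is $\cT$ itself, which does surject, $I'$ is minimal. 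I would close the argument by spelling out this last compatibility check and invoking the \S\ref{prim} identification one final time to read off $C^*(A_\Gamma^+)/I' \cong C^*(A_{\Gamma'}^+)$.
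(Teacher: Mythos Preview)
Your eventual identification of $I'$ is correct --- take $I_i$ to be a maximal ideal (quotient $\Cz$) for each component with $|V_i|=1$ and $I_i=0$ otherwise --- but the path there is confused in ways that matter. In $\Prim(\cT) = \{\bullet\} \sqcup \Tz$, the dense point $\bullet$ corresponds to the \emph{zero} ideal and the circle points to the \emph{maximal} ideals (each with quotient $\Cz$, not $C(\Tz)$); you reverse this twice. The ideal $\cK$ you first reach for is not primitive in $\cT$ at all. Once this is straightened out, minimality is immediate and needs none of your back-and-forth: the only primitive ideal of $\cT$ strictly below a maximal one is $0$, so any primitive $J \subsetneq I'$ has some coordinate $I_{i_0}=0$ with $|V_{i_0}|=1$, whence the quotient retains a full $\cT$ factor and the coordinate projection surjects onto $\Prim(\cT)$.

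The genuine gap is the non-surjection step for $I'$ itself. You argue that each factor of $\prod_{|V_i|>1}\Prim(C^*(A_{\Gamma_i}^+))$ is a one- or two-point space, hence ``by the topological argument above'' (a cardinality count) there is no continuous surjection onto $\Prim(\cT)$. But the product need not be finite: if $\Gamma$ has infinitely many co-irreducible components with $1<|V_i|<\infty$, this is an infinite product of Sierpi\'nski spaces, an uncountable set, and cardinality says nothing. The paper supplies the missing idea: in such a product $\prod_k X_k$ with $X_k = \{x_k, y_k\}$ and $y_k$ closed, the point $y = (y_k)_k$ lies in $\overline{\{x\}}$ for \emph{every} $x$, so any continuous $f$ into $\Prim(\cT)$ satisfies $f(y) \in \overline{\{f(x)\}}$ for all $x$; in $\Prim(\cT)$ this forces $f(x) \in \{f(y), \bullet\}$. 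Thus the image of $f$ has at most two points and $f$ cannot be surjective. This specialization-structure argument is the crux of the lemma and is absent from your outline.
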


\begin{proof}
Let $I$ be a primitive ideal of $C^*(A_{\Gamma}^+)$. As seen in Section \ref{preliminaries}, we know that $I$ is generated by $\gekl{\bigotimes_j J_{ij}}_i$, where $J_{ij} = C^*(A_{\Gamma_j}^+)$ for $i \neq j$ and $J_{ii} = I_i$ for primitive ideals $I_i$ of $C^*(A_{\Gamma_i}^+)$. It follows that $C^*(A_{\Gamma}^+) / I \cong \bigotimes_i C^*(A_{\Gamma_i}^+) / I_i$, and hence $\Prim(C^*(A_{\Gamma}^+) / I) \cong \prod_i \Prim(C^*(A_{\Gamma_i}^+) / I_i)$.  We now claim that there exists a continuous surjection $\Prim(C^*(A_{\Gamma}^+) / I) \onto \Prim{\cT}$ if and only if there exists a co-irreducible component $\Gamma_i$ of $\Gamma$ with $\abs{V_i} = 1$ and $I_i = (0)$. The direction \an{$\Larr$} is obvious. For \an{$\Rarr$}, assume that for every co-irreducible component $\Gamma_i$ of $\Gamma$ with $\abs{V_i} = 1$, $I_i$ is a maximal ideal of $C^*(A_{\Gamma_i}^+)$ such that $C^*(A_{\Gamma_i}^+) / I_i \cong \Cz$. Then $\Prim(C^*(A_{\Gamma}^+) / I) \cong \prod_k X_k$ where $X_k = \gekl{x_k,y_k}$ and the open subsets of $X_k$ are given by $\emptyset$, $\gekl{x_k}$ and $X_k$. This means that $\overline{\gekl{x_k}} = X_k$ and $\overline{\gekl{y_k}} = \gekl{y_k}$. Furthermore, we know that $\Prim(\cT) = \gekl{\bullet} \sqcup\Tz$, where $\overline{\gekl{\bullet}} = \Prim(\cT)$. Let $f: \prod_k X_k \to \Prim(\cT)$ be a continuous map. We want to show that $f$ cannot be surjective. Let $y = (y_k)_k$ and $f(y) = z$. For arbitrary $x \in \prod_k X_k$, we always have $y \in \overline{\gekl{x}}$. As $f^{-1}(\overline{\gekl{f(x)}})$ is closed and contains $x$, it must also contain $y$. Hence $z = f(y)$ lies in $\overline{\gekl{f(x)}}$. This implies that $f(x) = z$ or $f(x) = \bullet$. But this holds for every $x$ in $\prod_k X_k$. Hence the image of $f$ contains at most $2$ points, and thus $f$ cannot be surjective. This shows our claim.

Therefore, a primitive ideal $I'$ of $C^*(A_{\Gamma}^+)$ such that $\Prim(C^*(A_{\Gamma}^+) / I')$ does not continuously surject onto $\Prim(\cT)$ and which is minimal among all the primitive ideals with this property is generated by $\gekl{\bigotimes_j J_{ij}}_i$, where for a co-irreducible component $\Gamma_i$ with $\abs{V_i} = 1$, $J_{ii} = I_i$ is a maximal ideal of $C^*(A_{\Gamma_i}^+)$ with $C^*(A_{\Gamma_i}^+) / I_i \cong \Cz$, and for a co-irreducible component $\Gamma_i$ with $\abs{V_i} > 1$, $J_{ii} = (0)$. We conclude that $C^*(A_{\Gamma}^+) / I' \cong C^*(A_{\Gamma'}^+)$. 
\end{proof}

\blemma
\label{t}
Let $\Gamma$ and $\Lambda$ be two graphs. 
\begin{itemize}
\item[(1)] If $C^*(A_{\Gamma}^+)$ and $C^*(A_{\Lambda}^+)$ are isomorphic, then $t(\Gamma) = t(\Lambda)$ and $C^*(A_{\Gamma'}^+) \cong C^*(A_{\Lambda'}^+)$.

\item[(2)]  If $C^*(A_{\Gamma}^+) \otimes \cK$ and $C^*(A_{\Lambda}^+) \otimes \cK$ are isomorphic, then $t(\Gamma) = t(\Lambda)$ and $C^*(A_{\Gamma'}^+) \otimes \cK \cong C^*(A_{\Lambda'}^+) \otimes \cK$.

\end{itemize}
\elemma
\bproof
We first prove (1).  Since an isomorphism $C^*(A_{\Gamma}^+) \cong C^*(A_{\Lambda}^+)$ sends the primitive ideal $I$ to a primitive ideal of $C^*(A_{\Lambda}^+)$ with the analogous property, we conclude that every isomorphism $C^*(A_{\Gamma}^+) \cong C^*(A_{\Lambda}^+)$ induces an isomorphism $C^*(A_{\Gamma'}^+) \cong C^*(A_{\Lambda'}^+)$. To prove that $t(\Gamma) = t(\Lambda)$, we observe that the primitive ideals of $C^*(A_{\Gamma}^+)$ which are contained in $I$ are in one-to-one correspondence with the subsets of $\menge{\Gamma_i}{\abs{V_i} = 1}$. Again, as an isomorphism $C^*(A_{\Gamma}^+) \cong C^*(A_{\Lambda}^+)$ sends the primitive ideal $I$ to a primitive ideal of $C^*(A_{\Lambda}^+)$ with the analogous property, we conclude that the power sets of $\menge{\Gamma_i}{\abs{V_i} = 1}$ and $\menge{\Lambda_j}{\abs{W_j} = 1}$ have the same cardinality. Hence also $\menge{\Gamma_i}{\abs{V_i} = 1}$ and $\menge{\Lambda_j}{\abs{W_j} = 1}$ must have the same cardinality (which is either finite or countably infinite).  This proves (1).

(2) is proved in a similar way as (1) using the observation that every primitive ideal of $B \otimes \cK$ is of the form $I \otimes \cK$, where $I$ is a primitive ideal of $B$.
\eproof

\blemma
Let $A_i$, $i = 1,2,\dotsc$, be a countably infinite family of properly infinite unital C*-algebras. Then $A = \bigotimes_{i=1}^{\infty} A_i$ is purely infinite.
\elemma
\bproof
We have to show that every non-zero positive element $a$ of $A$ is properly infinite. By \cite[Lemma~3.3]{ekmr:incacao}, it suffices to find for every $\ve > 0$ a properly infinite, positive element $b \in A$ with $\norm{a-b} < \ve$ and $b \precsim a$. Since $A = \bigotimes_{i=1}^{\infty} A_i$, there exists a (sufficiently large) natural number $n$ and a positive element $x \in \bigotimes_{i=1}^n A_i$ with $\norm{a - x \otimes 1} < \tfrac{\ve}{2}$. By \cite[Lemma~2.2]{ekmr:incacao}, we have that $b \defeq (x - \tfrac{\ve}{2})_+ \otimes 1 = (x \otimes 1 - \tfrac{\ve}{2})_+$ satisfies $b \precsim a$. Also, we have $\norm{b-a} \leq \norm{b - x \otimes 1} + \norm{x \otimes 1 - a} < \ve$. So it suffices to show that $b$ is properly infinite. By construction, $b$ is of the form $c \otimes 1$ for some positive element $c \in \bigotimes_{i=1}^n A_i$. Since the unit $1 \in A_{n+1}$ is properly infinite, we can find isometries $s$ and $t$ in $A_{n+1}$ with $ss^* \perp tt^*$. So $b = c \otimes 1 = (c^{1/2} \otimes s)^* (c^{1/2} \otimes s) \approx (c^{1/2} \otimes s) (c^{1/2} \otimes s)^* = c \otimes ss^*$. Similarly, $b \approx c \otimes tt^*$. But since $(c \otimes ss^*)(c \otimes tt^*) = 0$, we conclude that $b \oplus b \approx (c \otimes ss^*) \oplus (c \otimes tt^*) \approx c \otimes (ss^* + tt^*) \leq c \otimes 1 = b$.
\eproof
\blemma
\label{infirrcomp-pi}
Let $\Gamma$ be a graph with (countably) infinitely many co-irreducible components $\Gamma_i = (V_i,E_i)$, $i = 1,2,\dotsc$. Assume that $1 < \abs{V_i} < \infty$ for all $i$. Then $C^*(A_{\Gamma}^+)$ is strongly purely infinite, i.e., $C^*(A_{\Gamma}^+) \cong C^*(A_{\Gamma}^+) \otimes \cO_{\infty}$.
\elemma
\bproof
By \cite[Theorem~8.3]{jcml:bqitaag}, we know that $C^*(A_{\Gamma}^+)$ has the ideal property (the definition can be found in \cite[Remark~2.1]{cpmr:picrrz}). Moreover, we know that $C^*(A_{\Gamma}^+) \cong \bigotimes_{i=1}^{\infty} C^*(A_{\Gamma_i}^+)$, and each of the $C^*(A_{\Gamma_i}^+)$ is a properly infinite unital C*-algebra. Hence by the previous lemma, we know that $C^*(A_{\Gamma}^+)$ is purely infinite. Therefore, \cite[Proposition~2.14]{cpmr:picrrz} tells us that $C^*(A_{\Gamma}^+)$ is strongly purely infinite. And finally, if $C^*(A_{\Gamma}^+)$ is strongly purely infinite, then \cite[Theorem~9.1]{ekmr:incacao} implies that $C^*(A_{\Gamma}^+) \cong C^*(A_{\Gamma}^+) \otimes \cO_{\infty}$ because $C^*(A_{\Gamma}^+)$ is nuclear and unital.
\eproof

Finally, we are ready for the proof of Theorem~\ref{general-theo}.
\bproof[Proof of Theorem~\ref{general-theo}]
Let us first of all show that if $C^*(A_{\Gamma}^+)\otimes\cK \cong C^*(A_{\Lambda}^+)\otimes\cK$ holds, then conditions (i), (ii) and (iii) must be satisfied. By Lemma~\ref{t} condition (i) holds and that $C^*(A_{\Gamma'}^+)\otimes\cK \cong C^*(A_{\Lambda'}^+)\otimes\cK$. Hence we may assume that all the co-irreducible components of $\Gamma$ and $\Lambda$ have more than one vertex. 

To prove (ii), we observe that the minimal non-zero primitive ideals of $C^*(A_{\Gamma}^+)$ are of the form $I_i = \otimes_j J_{ij}$, where $J_{ij} = C^*(A_{\Gamma_j}^+)$ if $j \neq i$, and $J_{ii} = \cK \triangleleft C^*(A_{\Gamma_i}^+)$ ($\Gamma_i$ consists of only finitely many vertices). For the corresponding quotient, we get $C^*(A_{\Gamma}^+) / I_i \cong \bigotimes_j Q_{ij}$, where $Q_{ij} = C^*(A_{\Gamma_j}^+)$ if $j \neq i$, and $Q_{ii} = C^*(A_{\Gamma_i}^+) / \cK$. Since $K_0(C^*(A_{\Gamma_i}^+) / \cK) \cong \Zz / \abs{\chi(\Gamma_i)} \Zz$, it follows that $K_0(C^*(A_{\Gamma}^+) / I_i) \cong \Zz / \abs{\chi(\Gamma_i)} \Zz$. Hence, we have shown that $N_0(\Gamma)$ is the number of minimal non-zero primitive ideals $I\otimes\cK$ of $C^*(A_{\Gamma}^+)\otimes\cK$ with the property that $K_0(C^*(A_{\Gamma}^+) / I) \cong \Zz$, and that for every $n = 1, 2, \dotsc$, $N_{-n}(\Gamma) + N_n(\Gamma)$ is the number of minimal non-zero primitive ideals $I\otimes\cK$ of $C^*(A_{\Gamma}^+)\otimes\cK$ with the property that $K_0(C^*(A_{\Gamma}^+) / I) \cong \Zz / n \Zz$. Since these descriptions are invariant under stable isomorphisms of C*-algebras, we conclude that (ii) must hold. 

Let us now prove (iii) under the assumption of stable isomorphism.  If $\sum N_n(\Gamma) = \infty$ we are done, so suppose the contrary and note that in this case,  $C^*(A_{\Gamma}^+)$ is strongly purely infinite if and only if $o(\Gamma) > 0$. The direction \an{$\Rarr$} is clear, since $o(\Gamma) > 0$ implies that $C^*(A_{\Gamma}^+)$ has $\cO_{\infty}$ as a tensor factor. To prove \an{$\Larr$}, we observe that if $o(\Gamma) = 0$, then $C^*(A_{\Gamma}^+)$ contains the algebra of compact operators as an ideal, hence cannot be strongly purely infinite. As a consequence,  $C^*(A_{\Gamma}^+)\otimes\cK \cong C^*(A_{\Lambda}^+)\otimes\cK$ implies that either both $o(\Gamma) > 0$ and $o(\Lambda) > 0$, or $o(\Gamma) = o(\Lambda) = 0$, as desired. 

Finally, we assume that $C^*(A_{\Gamma}^+) \cong C^*(A_{\Lambda}^+)$ and that $\sum N_n(\Gamma) <\infty$, that $N_0(\Gamma)=0$ and that $o(\Gamma)=0$. The algebra $\cK$ of compact operators sits inside $C^*(A_{\Gamma}^+)$ as the (unique) minimal non-zero ideal. The inclusion $\cK \into C^*(A_{\Gamma}^+)$ sends in K-theory the $K_0$-class of a minimal projection to $(\prod_i \chi(\Gamma_i)) \cdot [1]$, where $(\prod_i \chi(\Gamma_i))$ is the product over all co-irreducible components of $\Gamma$ (there are only finitely many by assumption) of the Euler characteristics. As $N_0(\Gamma) = 0$, $(\prod_i \chi(\Gamma_i))$ is a non-zero number, and it is positive if and only if $\sum_{n=1}^{\infty} N_{-n}(\Gamma) \equiv 0 \mod 2$. Since $C^*(A_{\Gamma}^+) \cong C^*(A_{\Lambda}^+)$, we must have $\sum_{n=1}^{\infty} N_{-n}(\Gamma) \equiv \sum_{n=1}^{\infty} N_{-n}(\Lambda) \mod 2$. Therefore, all in all, condition (iv) follows when the C*-algebras are isomorphic.

In the opposite direction, we know from Sections \ref{preliminaries} and \ref{coirr} that
\[
  C^*(A_{\Gamma}^+) 
  \cong
  \cT^{\otimes t(\Gamma)} \otimes \cO_{\infty}^{\otimes o(\Gamma)} \otimes \bigotimes_{n=0}^{\infty} \bigotimes_{\menge{i}{\abs{\chi(\Gamma_i)} = n}} E_{1+n}^{\sgn(\chi(\Gamma_i))} \]
and
\[
C^*(A_{\Lambda}^+) 
  \cong 
\cT^{\otimes t(\Lambda)} \otimes \cO_{\infty}^{\otimes o(\Lambda)} \otimes \bigotimes_{n=0}^{\infty} \bigotimes_{\menge{i}{\abs{\chi(\Lambda_i)} = n}} E_{1+n}^{\sgn(\chi(\Lambda_i))}
\]
We note from the outset  that whenever $o(\Gamma)>0$ or $N_0(\Gamma)>0$ then by repeated application of  either Lemma \ref{OinfxE} or Lemma \ref{E10xE} we may simplify these expressions to 
\begin{equation}\label{Gamstd}
  C^*(A_{\Gamma}^+) \cong  \cT^{\otimes t(\Gamma)} \otimes \cO_{\infty}^{\otimes o(\Gamma)}
  \otimes (E_1^0)^{\otimes N_0(\Gamma)} \otimes \bigotimes_{n=1}^{\infty} (E_{1+n}^{+1})^{\otimes (N_{-n}(\Gamma) + N_n(\Gamma))} .
\end{equation}

Assume that (i), (ii) and (iii) hold. 
We begin by noting that in the case $\sum N_{n} (\Gamma) < \infty$ if either $o(\Gamma)>0$ or $N_0(\Gamma)>0$, we also have either $o(\Lambda)>0$ or $N_0(\Lambda)>0$, and we get $C^*(A_{\Gamma}^+)\cong C^*(A_{\Lambda}^+)$ by reducing to the form given in \eqref{Gamstd} and applying (i) and (ii).

When $\sum N_n(\Gamma) = \infty$ then we have  by (ii) and Lemma \ref{infirrcomp-pi} that both $C^*(A_{\Gamma}^+)$ and $C^*(A_{\Lambda}^+)$ are strongly purely infinite, and hence we have
\bglnoz
  C^*(A_{\Gamma}^+) 
  &\cong&
  \cT^{\otimes t(\Gamma)} \otimes \cO_{\infty} 
  \otimes (E_1^0)^{\otimes N_0(\Gamma)} \otimes \bigotimes_{n=1}^{\infty} (E_{1+n}^{+1})^{\otimes (N_{-n}(\Gamma) + N_n(\Gamma))} \\
  &=&  
  \cT^{\otimes t(\Lambda)} \otimes \cO_{\infty}
  \otimes (E_1^0)^{\otimes N_0(\Lambda)} \otimes \bigotimes_{n=1}^{\infty} (E_{1+n}^{+1})^{\otimes (N_{-n}(\Lambda) + N_n(\Lambda))} \\
  &\cong&
   C^*(A_{\Lambda}^+),
\eglnoz
since Lemma \ref{OinfxE} may be applied as above.

It remains to treat the case that $o(\Gamma)=N_0(\Gamma)=0$ and 
 $\sum N_n(\Gamma) < \infty$. Again, by (ii) and (iii), we must have 	$o(\Lambda)=N_0(\Lambda)=0$ and $\sum  N_n(\Lambda) < \infty$ as well, and we get
\bglnoz
  C^*(A_{\Gamma}^+)\otimes\cK
  &\cong&  
  \cT^{\otimes t(\Gamma)} 
  \otimes \bigotimes_{n=1}^{\infty} (E_{1+n}^{+1})^{\otimes (N_{-n}(\Gamma) + N_n(\Gamma))} \otimes\cK\\
  &=&  
  \cT^{\otimes t(\Lambda)} 
  \otimes \bigotimes_{n=1}^{\infty} (E_{1+n}^{+1})^{\otimes (N_{-n}(\Lambda) + N_n(\Lambda))}\otimes\cK\\
 & \cong& C^*(A_{\Lambda}^+)\otimes\cK
\eglnoz
this time appealing to the second half of Lemma \ref{notconfuse}.

Assuming further (iv), we now aim for exact isomorphism, noting that we have already established it when  $o(\Gamma)>0$, $N_0(\Gamma)>0$ or 
 $\sum N_n(\Gamma) = \infty$. We hence assume that $o(\Gamma)=N_0(\Gamma)=0$ and note that also $o(\Lambda)=N_0(\Lambda)=0$
 
Consider first the case where both  $\sum_{n=1}^{\infty} N_{-n}(\Gamma)$ and  $\sum_{n=1}^{\infty} N_{-n}(\Lambda)$ are even. We have
\bglnoz
  C^*(A_{\Gamma}^+) 
  &\cong& \cT^{\otimes t(\Gamma)} 
  \otimes \bigotimes_{n=1}^{\infty} (E_{1+n}^{+1})^{\otimes N_n(\Gamma)} \otimes \bigotimes_{n=1}^{\infty} (E_{1+n}^{-1})^{\otimes N_{-n}(\Gamma)} \\
  &\cong& \cT^{\otimes t(\Gamma)} 
  \otimes \bigotimes_{n=1}^{\infty} (E_{1+n}^{+1})^{\otimes N_n(\Gamma)} \otimes \bigotimes_{n=1}^{\infty} (E_{1+n}^{+1})^{\otimes N_{-n}(\Gamma)} \\
  &\cong& 
  \cT^{\otimes t(\Lambda)} \otimes \bigotimes_{n=1}^{\infty} (E_{1+n}^{+1})^{\otimes (N_{-n}(\Gamma) + N_n(\Gamma))} \\
  &=& 
  \cT^{\otimes t(\Lambda)} \otimes \bigotimes_{n=1}^{\infty} (E_{1+n}^{+1})^{\otimes (N_{-n}(\Lambda) + N_n(\Lambda))}
  \cong C^*(A_{\Lambda}^+)
\eglnoz
by Lemma~\ref{E++E--}.
Now assume both $\sum_{n=1}^{\infty} N_{-n}(\Gamma)$ and $\sum_{n=1}^{\infty} N_{-n}(\Lambda)$ are odd. If there exists $\chi < 0$ such that there are co-irreducible components $\Gamma_k$ and $\Lambda_l$ with $\chi(\Gamma_k) = \chi = \chi(\Lambda_l)$, then we deduce from the previous case that
\bglnoz
  C^*(A_{\Gamma}^+) 
  &\cong& \rukl{\bigotimes_{\Gamma_i \neq \Gamma_k} C^*(A_{\Gamma_i}^+)} \otimes C^*(A_{\Gamma_k}^+) \\
  &\cong& \rukl{\bigotimes_{\Gamma_i \neq \Gamma_k} C^*(A_{\Gamma_i}^+)} \otimes E_{1 + \abs{\chi}}^{-1} \\
  &\cong& \rukl{\bigotimes_{\Lambda_j \neq \Lambda_l} C^*(A_{\Lambda_j}^+)} \otimes C^*(A_{\Lambda_l}^+)
  \cong C^*(A_{\Lambda}^+).
\eglnoz
If there exists no such $\chi$, then by (ii) there must be $\chi < 0$, $\psi < 0$ and co-irreducible components $\Gamma_{k_-}$, $\Gamma_{k_+}$, $\Lambda_{l_-}$, $\Lambda_{l_+}$ with $\chi(\Gamma_{k_-}) = \chi$, $\chi(\Lambda_{l_+}) = - \chi$, $\chi(\Gamma_{k_+}) = - \psi$ and $\chi(\Lambda_{l_-}) = \psi$. Hence
\bglnoz
  C^*(A_{\Gamma}^+) 
  &\cong& \rukl{\bigotimes_{\Gamma_i \neq \Gamma_{k_+}, \, \Gamma_{k_-}} C^*(A_{\Gamma_i}^+)} \otimes C^*(A_{\Gamma_{k_+}}^+) \otimes C^*(A_{\Gamma_{k_-}}^+) \\
  &\cong& \rukl{\bigotimes_{\Gamma_i \neq \Gamma_{k_+}, \, \Gamma_{k_-}} E_{1+ \abs{\chi(\Gamma_i)}}^{\sgn(\chi(\Gamma_i))}} 
  \otimes E_{1 + \abs{\psi}}^{+1} \otimes E_{1 + \abs{\chi}}^{-1} \\
  &\cong& \rukl{\bigotimes_{\Gamma_i \neq \Gamma_{k_+}, \, \Gamma_{k_-}} E_{1+ \abs{\chi(\Gamma_i)}}^{\sgn(\chi(\Gamma_i))}}
  \otimes E_{1 + \abs{\psi}}^{-1} \otimes E_{1 + \abs{\chi}}^{+1} \\
  &\cong& \rukl{\bigotimes_{\Gamma_i \neq \Gamma_{k_+}, \, \Gamma_{k_-}} C^*(A_{\Gamma_i}^+)} \otimes C^*(A_{\Lambda_{l_-}}^+) \otimes C^*(A_{\Lambda_{l_+}}^+)
  \cong C^*(A_{\Lambda}^+).
\eglnoz
In the third step, we used Lemma~\ref{E+-E-+}, and in the fourth step, we used our argument in the previous case. \eproof

\section{The isomorphism problem from the perspective of classification of non-simple C*-algebras}

We give an interpretation of Theorem~\ref{general-theo} from the point of view of classifying non-simple C*-algebras.

We let $\mathcal{O} ( \mathrm{Prim} ( A ) )$ denote the set of open subsets in $\mathrm{Prim}(A)$, and $\mathbb{I} (A)$ the lattice of ideals. The map $\ftn{ \psi_{A} }{ \mathcal{O} ( \mathrm{Prim} ( A ) ) }{ \mathbb{I} (A) }$ given by $\psi_{A} ( U) = \bigcap_{ \rho \notin U } \rho$ is a lattice isomorphism which preserves arbitrary suprema and finite infima.  We denote $\psi_{A} (U)$ by $A [ U ]$.  For every C*-algebra $A$, we denote the pair 
\begin{align*}
\left( \mathrm{Prim} ( A ) , \{ \ksix ( A / A[U] ; A[V] / A[U] ) \}_{ \substack{ V, U \in \mathbb{O} ( \mathrm{Prim} (A) ) \\ U \subseteq V} } \right)
\end{align*}
by $\mathfrak{F} ( A )$, where $\ksix(B,J)$ denotes the standard six-term exact sequence associated to an ideal $J$ of a C*-algebra $B$, considering each $K_0$-group as an ordered group. 

An isomorphism from $\mathfrak{F} (A)$ to $\mathfrak{F} (B)$ thus consists of a homeomorphism
$$
  \ftn{ \phi }{ \mathrm{Prim} (A) }{ \mathrm{Prim} (B) }
$$
and isomorphisms
$$
  \ftn{ \alpha_{U, V} }{ K_{*} ( A[V] / A[U] ) }{ K_{*} ( B[ \phi(V) ] / B [ \phi(U) ] ) }
$$
for each $U, V \in \mathbb{O} ( \mathrm{Prim} (A) )$ with $U \subseteq V$, such that $( \alpha_{U, V}, \alpha_{X, U} , \alpha_{ X, V} )$ is an isomorphism from $\ksix ( A  / A [U] ;  A [ V ] / A [U] )$ to $\ksix ( B / B[ \phi(U) ] ; B [ \phi(V) ] / B[ \phi(U) ] )$ in the sense that it makes all squares commute and is an order isomorphism on all even parts of the $K$-theory. 

 If $A$ and $B$ are unital, we write $( \mathfrak{F}(A) , [1_{A} ] ) \cong ( \mathfrak{F}(B) , [ 1_{B} ] )$ if $\mathfrak{F} (A) \cong \mathfrak{F} (B)$ in such a way that the isomorphism $\alpha_{X, \emptyset}$ sends $[1_{A}]$ in $K_{0} (A)$ to $[1_{B}]$ in $K_{0} (B)$.

Note that if $\ftn{ \phi }{ \mathrm{Prim} (A) }{ \mathrm{Prim} (B) }$ is a homeomorphism, there exists a lattice isomorphism from $\mathbb{I} ( A )$ to $\mathbb{I} (B)$ given by $I \mapsto \psi_{B} ( \phi ( \psi_{A}^{-1} (I) ) )$.  Hence, if $A$ and $B$ are separable and $\ftn{ \phi }{ \mathrm{Prim} (A) }{ \mathrm{Prim} (B) }$ is a homeomorphism, then for all $U \in \mathbb{O} ( \mathrm{Prim} ( A ) )$, we have that $A [U]$ is a primitive ideal of $A$ if and only if $B[ \phi (U) ]$ is a primitive ideal of $B$ (because primitive ideals are precisely given by prime ideals for separable C*-algebras).

The following easy observation is left to the reader.

\begin{lemma}
Let $A$ and $B$ be separable C*-algebras.  Let $U \in \mathbb{O} ( \mathrm{Prim} (A) )$.  
\begin{itemize}
\item[(1)] If $\mathfrak{F} (A) \cong \mathfrak{F} (B)$ via a homeomorphism $\ftn{ \phi }{ \mathrm{Prim} (A) }{ \mathrm{Prim} (B) }$, then
$$
  \mathfrak{F}(A / A[U] ) \cong \mathfrak{F} ( B / B[ \phi (U) ] ).
$$

\item[(2)] If $A$ and $B$ are unital C*-algebras and $( \mathfrak{F} (A) , [ 1_{A} ] ) \cong ( \mathfrak{F} (B) , [ 1_{B} ] )$ via a homeomorphism $\ftn{ \phi }{ \mathrm{Prim} (A) }{ \mathrm{Prim} (B) }$, then 
\begin{align*}
\left( \mathfrak{F}(A / A[U] ) , [ 1_{ A / A[U] } ] \right) \cong \left( \mathfrak{F} ( B / B[ \phi (U) ] ) , [ 1_{ B / B[ \phi (U) ] } ] \right).
\end{align*}
\end{itemize}   
\end{lemma}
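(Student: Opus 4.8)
The plan is to reduce everything to the standard bookkeeping identity $\mathrm{Prim}(A/A[U])\cong\mathrm{Prim}(A)\setminus U$, a closed subspace of $\mathrm{Prim}(A)$. Under $\psi$ this identifies $\mathbb{O}(\mathrm{Prim}(A/A[U]))$ with the sublattice $\{W\in\mathbb{O}(\mathrm{Prim}(A)):U\subseteq W\}$, an open set $W\supseteq U$ corresponding to the ideal $(A/A[U])[W\setminus U]=A[W]/A[U]$; I would check this last equality directly from $\psi_{A/A[U]}(W\setminus U)=\bigcap\{\bar\rho:\bar\rho\notin W\setminus U\}$ together with the bijection $\rho\leftrightarrow\rho/A[U]$ between primitive ideals of $A$ not in $U$ and primitive ideals of $A/A[U]$. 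Consequently every subquotient of $A/A[U]$ is literally a subquotient $A[W_2]/A[W_1]$ of $A$ with $U\subseteq W_1\subseteq W_2$, and each six-term exact sequence recorded by $\mathfrak{F}(A/A[U])$ becomes, under these identifications, one of the six-term exact sequences $\ksix(A/A[W_1];A[W_2]/A[W_1])$ already recorded by $\mathfrak{F}(A)$, using that the full algebra $A/A[U]$ itself corresponds to $W=\mathrm{Prim}(A)$ in the sublattice.

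First I would note that the homeomorphism $\phi\colon\mathrm{Prim}(A)\to\mathrm{Prim}(B)$ carries the open set $U$ onto the open set $\phi(U)$, hence restricts to a homeomorphism $\mathrm{Prim}(A/A[U])\to\mathrm{Prim}(B/B[\phi(U)])$, where $B[\phi(U)]$ is the ideal of $B$ matching $A[U]$ under the lattice isomorphism $I\mapsto\psi_B(\phi(\psi_A^{-1}(I)))$ recorded just before the lemma. For (1) I would then simply \emph{restrict} the family $\{\alpha_{W_1,W_2}\}$ of isomorphisms furnished by $\mathfrak{F}(A)\cong\mathfrak{F}(B)$ to the pairs with $U\subseteq W_1\subseteq W_2$. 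By the first paragraph every subquotient, every six-term square, and every order-isomorphism condition that one must verify for $\mathfrak{F}(A/A[U])\cong\mathfrak{F}(B/B[\phi(U)])$ is, under the identifications above, verbatim one that already holds for $\mathfrak{F}(A)\cong\mathfrak{F}(B)$, so there is nothing new to check.

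For (2) the single extra point is that $\alpha_{U,\mathrm{Prim}(A)}\colon K_0(A/A[U])\to K_0(B/B[\phi(U)])$ preserves the class of the unit. I would obtain this from the compatibility of $\mathfrak{F}(A)\cong\mathfrak{F}(B)$ with the six-term exact sequence of $0\to A[U]\to A\to A/A[U]\to 0$: the square
\[
\xymatrix{
K_0(A)\ar[r]\ar[d]_-{\alpha_{\emptyset,\mathrm{Prim}(A)}} & K_0(A/A[U])\ar[d]^-{\alpha_{U,\mathrm{Prim}(A)}}\\
K_0(B)\ar[r] & K_0(B/B[\phi(U)])
}
\]
commutes, its horizontal arrows are induced by the quotient maps, and by hypothesis $\alpha_{\emptyset,\mathrm{Prim}(A)}[1_A]=[1_B]$. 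Chasing $[1_A]$ around the square, and using that quotient $*$-homomorphisms send units to units, yields $\alpha_{U,\mathrm{Prim}(A)}[1_{A/A[U]}]=[1_{B/B[\phi(U)]}]$, which is exactly the unital condition for $\mathfrak{F}(A/A[U])\cong\mathfrak{F}(B/B[\phi(U)])$ (here the class-of-the-unit isomorphism in $\mathfrak{F}(A/A[U])$ is the restricted $\alpha_{U,\mathrm{Prim}(A)}$, since $\mathrm{Prim}(A)$ and $U$ are the top and bottom of the sublattice).

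I do not expect a genuine obstacle: the entire content is the observation of the first paragraph, namely that passing to $A/A[U]$ merely restricts the indexing lattice $\mathbb{O}(\mathrm{Prim}(A))$ to the interval above $U$ while leaving all $K$-groups, maps, order structures and six-term sequences untouched. The only points needing a little care are the identification $(A/A[U])[W\setminus U]=A[W]/A[U]$ and, for (2), getting the orientation of the unit-preservation square right so that $[1_A]$ maps to $[1_{A/A[U]}]$ and not the other way; both are routine.
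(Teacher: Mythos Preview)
Your proposal is correct and is precisely the natural argument; the paper itself does not give a proof, explicitly stating that ``the following easy observation is left to the reader.'' Your identification of $\mathrm{Prim}(A/A[U])$ with the closed set $\mathrm{Prim}(A)\setminus U$, the resulting bijection between $\mathbb{O}(\mathrm{Prim}(A/A[U]))$ and the interval $\{W:U\subseteq W\}$ in $\mathbb{O}(\mathrm{Prim}(A))$, and the equality $(A/A[U])[W\setminus U]=A[W]/A[U]$ together reduce everything to restricting the given data, exactly as you say. The unit-preservation argument in (2) via the square coming from $\ksix(A;A[U])$ is the right way to extract the extra condition. One cosmetic remark: the paper's indexing of the $\alpha$'s is slightly inconsistent (compare the definition of $\alpha_{U,V}$ with the later use of $\alpha_{X,\emptyset}$), but you have resolved this in the sensible direction, writing $\alpha_{\emptyset,\mathrm{Prim}(A)}$ for the map on $K_0(A)$ in accordance with the stated convention $\alpha_{U,V}\colon K_*(A[V]/A[U])\to K_*(B[\phi(V)]/B[\phi(U)])$.
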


\begin{theorem}\label{K-theory-main}
Let $\Gamma$ and $\Lambda$ be two (countable) graphs. Then $C^{*} ( A_{ \Gamma }^{+} ) \otimes \cK \cong C^{*} ( A_{ \Lambda }^{+} ) \otimes \cK$ if and only if $\mathfrak{F} (C^{*} ( A_{ \Gamma }^{+} )) \cong \mathfrak{F}(C^{*} ( A_{ \Lambda }^{+} ))$, and $C^{*} ( A_{ \Gamma }^{+} ) \cong C^{*} ( A_{ \Lambda }^{+} )$ if and only if $( \mathfrak{F} (C^{*} ( A_{ \Gamma }^{+} )) , [ 1_{C^{*} ( A_{ \Gamma }^{+} )} ] ) \cong ( \mathfrak{F}(C^{*} ( A_{ \Lambda }^{+} )) , [ 1_{C^{*} ( A_{ \Lambda }^{+} )} ] )$.
\end{theorem}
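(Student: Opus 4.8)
The plan is to deduce Theorem~\ref{K-theory-main} from Theorem~\ref{general-theo} by showing that the two $K$-theoretic conditions --- $\mathfrak{F}(C^*(A_\Gamma^+)) \cong \mathfrak{F}(C^*(A_\Lambda^+))$ and its unital refinement --- are equivalent to, respectively, the stably-isomorphic conditions (i)--(iii) and the full set (i)--(iv) from Theorem~\ref{general-theo}. Since Theorem~\ref{general-theo} already tells us that (i)--(iii) characterize stable isomorphism and (i)--(iv) characterize isomorphism, it suffices to prove the following two implications (the reverse directions being immediate, because an isomorphism of C*-algebras, after tensoring with $\cK$ if necessary, induces an isomorphism of $\mathfrak{F}$ preserving the unit class when the original isomorphism was unital): first, $\mathfrak{F}(C^*(A_\Gamma^+)) \cong \mathfrak{F}(C^*(A_\Lambda^+))$ implies (i), (ii), (iii); and second, $(\mathfrak{F}(C^*(A_\Gamma^+)), [1]) \cong (\mathfrak{F}(C^*(A_\Lambda^+)), [1])$ implies in addition (iv).

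For the first implication, I would extract the combinatorial invariants $t(\Gamma)$, $o(\Gamma)$ and the $N_n(\Gamma)$ directly from $\mathfrak{F}$. The homeomorphism component of an isomorphism of $\mathfrak{F}$ gives a homeomorphism $\Prim(C^*(A_\Gamma^+)) \cong \Prim(C^*(A_\Lambda^+))$; as described in \S\ref{prim}, this space is a product of the primitive ideal spaces of the co-irreducible components, where a singleton component contributes a ``point-plus-circle'' (Toeplitz) factor, a finite component with more than one vertex contributes a two-point Sierpi\'nski factor, and an infinite component contributes a single point. I would argue --- paralleling the topological argument in Lemma~\ref{pret} and the counting argument in Lemma~\ref{t} --- that $t(\Gamma)$ is recovered as the number of factors that are not $T_0$-separated in a way detectable by mapping onto $\Prim(\cT)$, hence $t$ is a homeomorphism invariant, giving (i). Passing to the quotient by the ideal $I'$ of Lemma~\ref{pret} (which is characterized purely in terms of $\Prim$, and survives to $\mathfrak{F}$ by part~(1) of the Lemma preceding the theorem), we may assume all components have more than one vertex. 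Then, exactly as in the proof of Theorem~\ref{general-theo}, the minimal nonzero primitive ideals $I$ are indexed by the finite components $\Gamma_i$, and $K_0(C^*(A_\Gamma^+)/I) \cong \Zz/|\chi(\Gamma_i)|\Zz$; since $\mathfrak{F}$ records, for each such minimal $I$ (equivalently each open set $U$ with $C^*(A_\Gamma^+)[U]$ of the relevant minimal type), the six-term sequence and in particular the group $K_0(C^*(A_\Gamma^+)/C^*(A_\Gamma^+)[U])$, the multiset $\{|\chi(\Gamma_i)|\}$ with multiplicity is an invariant of $\mathfrak{F}$, which is precisely $N_0(\Gamma)$ together with $N_{-n}(\Gamma)+N_n(\Gamma)$ for $n \geq 1$, giving (ii). For (iii), when $\sum_n N_n(\Gamma) = \infty$ there is nothing to prove; otherwise $o(\Gamma) > 0$ if and only if $\Prim$ has a factor that is a single point coming from an infinite component, while $o(\Gamma) = 0$ forces a closed point whose associated ideal is $\cK$ (detectable as a minimal nonzero ideal with $K_*$ of $\cK$ and $K_1$ of the quotient controlled appropriately) --- so $\min(o(\Gamma),1)$ is an invariant of $\mathfrak{F}$.

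For the second implication one additionally tracks the class of the unit. Under the reductions above we may assume $o(\Gamma) = N_0(\Gamma) = 0$ and $\sum_n N_n(\Gamma) < \infty$ (the cases where this fails having already been handled, since there (i)--(iii) already yield isomorphism). Then $\cK$ is the unique minimal nonzero ideal, and the inclusion $\cK \into C^*(A_\Gamma^+)$ sends the $K_0$-class of a minimal projection to $(\prod_i \chi(\Gamma_i)) \cdot [1]$; this map $K_0(\cK) \to K_0(C^*(A_\Gamma^+))$, together with the distinguished generator of $K_0(C^*(A_\Gamma^+)) \cong \Zz$ given by $[1]$, is part of the data of $(\mathfrak{F}, [1])$ --- specifically it is the connecting/inclusion map in $\ksix$ for $\cK \triangleleft C^*(A_\Gamma^+)$, with $K_0$ regarded as ordered and $\alpha_{X,\emptyset}$ required to fix $[1]$. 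Hence the integer $\prod_i \chi(\Gamma_i)$ --- in particular its sign --- is an invariant of $(\mathfrak{F}, [1])$, and since the sign equals $(-1)^{\sum_{n\ge1}N_{-n}(\Gamma)}$, condition (iv) follows. Combining, $(\mathfrak{F}(C^*(A_\Gamma^+)),[1]) \cong (\mathfrak{F}(C^*(A_\Lambda^+)),[1])$ gives (i)--(iv), hence $C^*(A_\Gamma^+) \cong C^*(A_\Lambda^+)$ by Theorem~\ref{general-theo}.

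The main obstacle I anticipate is the first implication's bookkeeping: being careful that every combinatorial quantity ($t$, the multiset of $|\chi(\Gamma_i)|$, $\min(o,1)$) is genuinely recoverable from $\mathfrak{F}$ alone rather than from an a priori isomorphism of algebras --- in particular, justifying that the specific ideals $I'$ and the minimal nonzero primitive ideals used in the proof of Theorem~\ref{general-theo} are intrinsically characterized by the pair $(\Prim, \ksix$-data$)$, so that the argument really runs at the level of the invariant. The homeomorphism-invariance of $t$ via the ``surjects onto $\Prim(\cT)$'' criterion (Lemma~\ref{pret}) and the ordered-$K_0$/unit-preservation needed to pin down the sign in (iv) are where I would be most careful; the reverse directions and the case analysis when $\sum_n N_n(\Gamma) = \infty$ or $o(\Gamma) > 0$ or $N_0(\Gamma) > 0$ are routine given Theorem~\ref{general-theo}.
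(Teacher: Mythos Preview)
Your approach is essentially identical to the paper's: reduce both directions to Theorem~\ref{general-theo} by showing that (i)--(iii) can be read off from $\mathfrak{F}$ and that (iv) additionally requires the unit class. The paper's proof proceeds exactly along your outline, invoking Lemma~\ref{pret} for (i), the minimal-primitive-ideal argument for (ii), and the inclusion $\cK \hookrightarrow C^*(A_\Gamma^+)$ together with $[1]$ for (iv).

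There is one imprecision worth fixing in your treatment of (iii). You write that $o(\Gamma)=0$ is ``detectable as a minimal nonzero ideal with $K_*$ of $\cK$ and $K_1$ of the quotient controlled appropriately,'' but the unordered $K$-groups do not separate the two cases: when $\sum_n N_n(\Gamma)<\infty$ and $o(\Gamma)>0$, the minimal nonzero ideal is $\cO_\infty\otimes\cK$, which has the \emph{same} $K_*$-groups as $\cK$. Likewise, ``$\Prim$ has a factor that is a single point coming from an infinite component'' is not intrinsic to the homeomorphism type, since a one-point factor is invisible in a product. The paper resolves this by using the \emph{order} on $K_0$, which is part of the data of $\mathfrak{F}$: the positive cone of $K_0(\cK)$ is $\Zz_{\geq 0}$, whereas every nonzero ideal of an $\cO_\infty$-absorbing algebra has $K_0$ with trivial order (every class is positive). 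Once you invoke ordered $K_0$ here---which you already flag for (iv)---your argument goes through verbatim.
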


\begin{proof}
For both statements, the direction \an{$\Rarr$} is obvious. To prove \an{$\Larr$}, we show that $\mathfrak{F} (C^{*} ( A_{ \Gamma }^{+} )) \cong \mathfrak{F}(C^{*} ( A_{ \Lambda }^{+} ))$ implies (i), (ii) and (iii) from Theorem~\ref{general-theo}, and that $( \mathfrak{F} (C^{*} ( A_{ \Gamma }^{+} )) , [ 1_{C^{*} ( A_{ \Gamma }^{+} )} ] )
  \cong ( \mathfrak{F}(C^{*} ( A_{ \Lambda }^{+} )) , [ 1_{C^{*} ( A_{ \Lambda }^{+} )} ] )$
implies (iv) from Theorem~\ref{general-theo}. We use the notations from Lemma~\ref{t}. The first step is to prove that $\mathfrak{F} (C^{*} ( A_{ \Gamma }^{+} )) \cong \mathfrak{F}(C^{*} ( A_{ \Lambda }^{+} ))$ implies $t(\Gamma) = t(\Lambda)$ and $\mathfrak{F} (C^{*} ( A_{ \Gamma' }^{+} )) \cong \mathfrak{F}(C^{*} ( A_{ \Lambda' }^{+} ))$. $t(\Gamma) = t(\Lambda)$ follows by Lemma~\ref{pret}, because we only use the primitive ideal space and the lattice structure of the set of ideals in this proof. To see that $\mathfrak{F} (C^{*} ( A_{ \Gamma' }^{+} )) \cong \mathfrak{F}(C^{*} ( A_{ \Lambda' }^{+} ))$, let $I'$ be a primitive ideal of $C^*(A_{\Gamma}^+)$ stipulated in Lemma \ref{pret}, and let $U$ be an open set of $\Prim(C^*(A_{\Gamma}^+))$ such that $C^*(A_{\Gamma}^+)[U] = I'$. Then $C^*(A_{\Lambda}^+)[\phi(U)]$ is an ideal with the analogous property. In the proof of Lemma~\ref{t}, we have seen that $C^*(A_{\Gamma}^+) / C^*(A_{\Gamma}^+)[U] \cong C^*(A_{\Gamma'}^+)$. Similarly, we have $C^*(A_{\Lambda}^+) / C^*(A_{\Lambda}^+)[U] \cong C^*(A_{\Lambda'}^+)$. Therefore, (2) from the previous lemma tells us that $\mathfrak{F} (C^{*} ( A_{ \Gamma' }^{+} )) \cong \mathfrak{F}(C^{*} ( A_{ \Lambda' }^{+} ))$, as desired. 

In particular, this implies (i), and we may assume as in the proof of Theorem~\ref{general-theo} that all the co-irreducible components of $\Gamma$ and $\Lambda$ have more than one vertex. Then (ii) follows in exactly the same way as in the proof of Theorem~\ref{general-theo} because we only use primitive ideal spaces, lattice structures of sets of ideals and $K_0$ in this proof. All this can be extracted from the invariant $\mathfrak{F}$. Let us prove (iii). As we have seen in the proof of Theorem~\ref{general-theo}, $o(\Gamma) = 0$ implies that $\cK$ is an ideal of $C^*(A_{\Gamma}^+)$, whereas $o(\Gamma) > 0$ implies that $C^*(A_{\Gamma}^+)$ (and hence also every non-zero ideal) is strongly purely infinite. These two cases can be distinguished by the order on $K_0$. Therefore, we see as in the proof of Theorem~\ref{general-theo} that if $\mathfrak{F} (C^{*} ( A_{ \Gamma }^{+} )) \cong \mathfrak{F}(C^{*} ( A_{ \Lambda }^{+} ))$, then either $o(\Gamma) > 0$ and $o(\Lambda) > 0$ or $o(\Gamma) = 0$ and $o(\Lambda) = 0$. Now assume that $( \mathfrak{F} (C^{*} ( A_{ \Gamma }^{+} )) , [ 1_{C^{*} ( A_{ \Gamma }^{+} )} ] ) \cong ( \mathfrak{F}(C^{*} ( A_{ \Lambda }^{+} )) , [ 1_{C^{*} ( A_{ \Lambda }^{+} )} ] )$. Then the proof of (iv) follows the proof of Theorem~\ref{general-theo}, where we only use lattice structures of sets of ideals, $K_0$ and the $K_0$-classes of the units.
\end{proof}

\section{Graph algebras and the semiprojectivity question}

Apart from semigroup C*-algebras we discussed above, there is another - more traditional - way of constructing a C*-algebra out of a directed graph, possibly allowing for loops. Now we would like to discuss the overlap of these two constructions. In other words, we are interested in the question: Which semigroup C*-algebras for right-angled Artin monoids are isomorphic to graph algebras? We can provide a complete answer to this question.

\subsection{Extensions of C*-algebras}  We first establish some facts about absorbing extensions and the C*-algebras associated to these extensions.  To each injective Busby map $\ftn{ \tau }{ A}{ \mathcal{Q}(B) }$, where $\mathcal{Q}(B) = \mathcal{M}(B) / B$ with $\mathcal{M}(B)$ the multiplier algebra of $B$, associate as usual the extension 
\[
\xymatrix{
e: & 0 \ar[r] & B \ar@{=}[d] \ar@{^{(}->}[r] & E \ar[r]^{\psi} \ar[d] & A \ar[r] \ar[d]^{\tau}& 0 \\
	& 0 \ar[r]    & B		 \ar@{^{(}->}[r]	& \mathcal{M}(B) \ar[r]^{\pi} & \mathcal{Q}(B) \ar[r] & 0
}
\]  
with $E = \pi^{-1} ( \tau (A) )$ and $\psi (x) = \tau^{-1} ( \pi (x) )$.  Note that $\psi$ is a homomorphism since $\tau$ is injective. 

We call $\tau$ (and $e$) \emph{unital} if $A$ is unital and $\tau$ is a unital homomorphism, or, equivalently, if $E$ is a unital C*-algebra.  If $\tau = \pi \circ \alpha$ for some homomorphism $\ftn{ \alpha }{ A}{\mathcal{M}(B)}$, then $\tau$ is called a \emph{trivial extension}.  If $A$ is unital and $\tau = \pi \circ \alpha$ for some unital homomorphism $\ftn{ \alpha }{ A}{\mathcal{M}(B)}$, then $\tau$ is called \emph{strongly unital}.  Not all unital trivial extensions are strongly unital.

Assume that $B$ is stable.  The sum $\tau \oplus \tau'$ of two extensions $\ftn{ \tau, \tau'}{A}{\mathcal{Q}(B)}$ is defined as follows.  Since $B$ is stable, there exist isometries $s_{1}, s_{2} \in \mathcal{M}(B)$ with $1_{\mathcal{M}(B)} = s_{1}s_{1}^{*} + s_{2} s_{2}^{*}$.  Set
\[
(\tau \oplus \tau' )(a) = \pi ( s_{1} ) \tau(a) \pi ( s_{1}^{*} ) + \pi ( s_{2} ) \tau' (a) \pi(s_{2}^{*} )
\]
for all $a \in A$. 

Two extensions $\ftn{ \tau , \tau' }{A}{\mathcal{Q}(B)}$ are said to be \emph{unitarily equivalent}, denoted by $\tau \sim_{u} \tau'$, if there exists a unitary $u \in \mathcal{M}(B)$ such that $\pi(u) \tau ( a) \pi(u)^{*} = \tau'(a)$ for all $a \in A$.  Then two extensions $\ftn{ \tau_{1} , \tau_{2} }{A}{ \mathcal{Q} (B) }$ define the same element in $\mathrm{Ext} (A, B )$ if there exists a unitary $u \in \mathcal{M}(B)$ and there exist trivial extensions $\ftn{ \tau_{1}' , \tau_{2}' }{ A }{ \mathcal{Q}(B) }$ such that $\tau_{1} \oplus \tau_{1}' \sim_{u} \tau_{2} \oplus \tau_{2}'$.  If $\tau_{1}$ and $\tau_{2}$ are unital extensions, then $\tau_{1}'$ and $\tau_{2}'$ can be chosen to be unital extensions (see \cite[Section~5]{mk: extensions kirchberg}).

For a C*-algebra $C$, we let $\widetilde{C}$ be the unitization of $C$ (adding a new unit if $C$ is a unital C*-algebra) and let $\ftn{ \iota_{C} }{C}{\widetilde{C}}$ be the embedding of $C$ into $\widetilde{C}$ as an ideal.  

Recall that an ideal $I$ of a C*-algebra $A$ is an \emph{essential ideal} if every nonzero ideal of $A$ has a nontrivial intersection with $I$.  An extension $0 \to I \overset{ \iota }{ \to } A \to B \to 0$ is \emph{essential} if $\iota(I)$ is an essential ideal of $A$.  It is a well-known fact that an extension $0 \to I \to A \to B \to 0$ is an essential extension if and only if the Busby invariant of the extension is injective.  We now prove in the following proposition that every essential extension $0 \to B \to E \to A \to 0$ with $A$ a non-unital, separable, nuclear C*-algebra and $B$ a C*-algebra that is isomorphic to either $\cK$ or a nuclear, purely infinite simple C*-algebra is absorbing. 

Before proving the proposition, we show that any absorbing extension must be an essential extension.  Hence, the assumption that the extension is essential is necessary.  Note that if $\tau$ or $\tau'$ is injective, then the sum $\tau \oplus \tau'$ is injective.  Since $B$ is stable, there exists a unital embedding from $\mathcal{O}_{2}$ to $\mathcal{M}(B)$ which induces a unital embedding from $\mathcal{O}_{2}$ to $\mathcal{Q}(B)$.  Nuclearity of $A$ gives us an embedding of $A$ into $\mathcal{O}_{2}$, thus the composition gives a trivial essential extension $\ftn{ \tau_{0} }{ A }{ \mathcal{Q}(B) }$.  Therefore, an absorbing extension $\tau$ is an essential extension since $\tau$ is unitarily equivalent to $\tau \oplus \tau_{0}$.
  
\begin{proposition}\label{p:absorbing extensions}
Let $A$ be a non-unital, separable, nuclear C*-algebra and let $B$ be a separable C*-algebra that is isomorphic to either $\cK$ or a nuclear, purely infinite simple C*-algebra.  If $\ftn{ \tau }{A}{ \mathcal{Q} (B) }$ is an essential extension, then for every trivial extension $\ftn{ \tau_{0} }{A}{\mathcal{Q} (B) }$ we have that $\tau \sim_{u} \tau \oplus \tau_{0}$.   Consequently, if $e_{i} : 0 \to B \to E_{i} \to A \to 0$ is an essential extension for $i = 1, 2$ and $[ \tau_{e_{1}} ] = [ \tau_{e_{2}} ]$ in $\mathrm{Ext} ( A, B )$, then $E_{1} \cong E_{2}$.
\end{proposition}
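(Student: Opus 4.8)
The first assertion is exactly the statement that the essential extension $\tau$ is \emph{absorbing}, and the plan is to deduce it from the Elliott--Kucerovsky absorption theorem; the second assertion will then follow formally by conjugating $E_{1}$ onto $E_{2}$ inside $\mathcal{M}(B)$. I would begin with two normalizations. Since $B$ is separable and is either $\cK$ or purely infinite simple, it is either unital --- in which case $\mathcal{Q}(B)=0$, there are no essential extensions, and the statement is vacuous --- or stable, using that $\cK$ is stable and that a $\sigma$-unital purely infinite simple C*-algebra is unital or stable (Zhang's dichotomy). So I may assume $B$ is $\sigma$-unital, stable and simple and has the corona factorization property, the last of which holds both for $\cK$ and for every stable Kirchberg algebra. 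Also, as $A$ is nuclear every extension of $A$ is nuclear, so ``absorbing'' and ``nuclearly absorbing'' coincide here.

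The heart of the matter is to check that $\tau$ is \emph{purely large}. Writing $B\subseteq E\subseteq\mathcal{M}(B)$ with $E=\pi^{-1}(\tau(A))$, the hypothesis that $\tau$ is essential says precisely that $B$ is an essential ideal of $E$, and I would verify that for each $x\in E\setminus B$ the hereditary subalgebra $D:=\overline{x B x^{*}}$ of $B$ contains a nonzero stable C*-subalgebra (which is then automatically full, since $B$ is simple). That $D\neq 0$ is seen as follows: if $xBx^{*}=0$ then $xB=0$, hence $Bx^{*}=0$, hence $x^{*}x$ annihilates $B$ on both sides and so lies in the annihilator ideal $\mathrm{Ann}(B)$; but $\mathrm{Ann}(B)$ meets $B$ trivially and is therefore $0$ by essentiality, forcing $x=0$. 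If $B=\cK$, then $D=\cK(\overline{\mathrm{ran}\,x})$, which is infinite-dimensional because $x$, not being compact, is not of finite rank; hence $D\cong\cK$ is itself stable. If $B$ is a stable Kirchberg algebra, then $D$ is a nonzero hereditary subalgebra of $B$, hence purely infinite and simple, so it contains an infinite projection $p$, and then $pDp$ is a properly infinite unital C*-algebra, which therefore contains a copy of $\cK$ built from a pair of isometries with orthogonal ranges. In either case $\tau$ is purely large.

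Granting this, since $A$ is separable and nuclear, $B$ is $\sigma$-unital, stable and simple with the corona factorization property, and $\tau$ is essential and purely large, the Elliott--Kucerovsky absorption theorem (refined by Kucerovsky--Ng; for $B=\cK$ this is the Weyl--von Neumann--Voiculescu--Kasparov theorem) gives $\tau\sim_{u}\tau\oplus\tau_{0}$ for every trivial extension $\tau_{0}$, which is the first assertion. For the consequence, suppose $e_{i}:0\to B\to E_{i}\to A\to 0$ are essential extensions with $[\tau_{e_{1}}]=[\tau_{e_{2}}]$ in $\mathrm{Ext}(A,B)$. By the definition of $\mathrm{Ext}$ recalled above there are trivial extensions $\tau_{1}',\tau_{2}'\colon A\to\mathcal{Q}(B)$ with $\tau_{e_{1}}\oplus\tau_{1}'\sim_{u}\tau_{e_{2}}\oplus\tau_{2}'$, so applying the first assertion to each $\tau_{e_{i}}$ gives $\tau_{e_{1}}\sim_{u}\tau_{e_{1}}\oplus\tau_{1}'\sim_{u}\tau_{e_{2}}\oplus\tau_{2}'\sim_{u}\tau_{e_{2}}$. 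Picking a unitary $u\in\mathcal{M}(B)$ with $\pi(u)\,\tau_{e_{1}}(a)\,\pi(u)^{*}=\tau_{e_{2}}(a)$ for all $a\in A$, the automorphism $\mathrm{Ad}(u)$ of $\mathcal{M}(B)$ carries the ideal $B$ onto itself and induces $\mathrm{Ad}(\pi(u))$ on $\mathcal{Q}(B)$, hence it carries $E_{1}=\pi^{-1}(\tau_{e_{1}}(A))$ onto $\pi^{-1}(\tau_{e_{2}}(A))=E_{2}$, and $E_{1}\cong E_{2}$.

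I expect the only genuinely deep ingredient to be the absorption theorem used in the last step, which I would quote rather than reprove; the rest is formal. The one point that demands honest (if routine) care is the purely-largeness verification in the middle paragraph --- namely that $\overline{xBx^{*}}$ is ``not small'', i.e.\ contains a nonzero stable subalgebra, for every $x\in E\setminus B$ --- and that reduces, as indicated, to the simplicity of $B$ and the structure of hereditary subalgebras of $\cK$ and of Kirchberg algebras.
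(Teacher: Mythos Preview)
Your argument is correct, but it follows a genuinely different route from the paper's. The paper does \emph{not} invoke the Elliott--Kucerovsky purely-large criterion. Instead it uses a unitization trick: extend $\tau$ and a lift $\alpha_{0}$ of $\tau_{0}$ to the unitization $\widetilde{A}$ (this is where non-unitality of $A$ enters, making $A$ essential in $\widetilde{A}$ so that $\widetilde{\tau}$ stays injective), form the \emph{unital} extension algebra $E=\pi^{-1}(\widetilde{\tau}(\widetilde{A}))\subseteq\mathcal{M}(B)$, and define a unital homomorphism $\eta\colon E\to\mathcal{M}(B)$ killing $E\cap B$. One is then exactly in the setting of the classical Weyl--von~Neumann--Voiculescu absorption theorem (Arveson, \emph{Duke Math.\ J.}\ 1977, Corollary~2) for $B=\cK$, and of its purely infinite analogue (Kirchberg, Proposition~7 of his preprint) for simple purely infinite $B$; these give a unitary $u\in\mathcal{M}(B)$ with $u(s_{1}xs_{1}^{*}+s_{2}\eta(x)s_{2}^{*})u^{*}-x\in B$ for all $x\in E$, which one checks implements $\tau\oplus\tau_{0}\sim_{u}\tau$. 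The deduction of $E_{1}\cong E_{2}$ is then identical to yours.

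What the two approaches buy: the paper's argument is more self-contained in that it cites only the 1977 Arveson theorem and Kirchberg's direct analogue, rather than the full Elliott--Kucerovsky machinery; it also makes transparent why non-unitality of $A$ is needed. Your approach is cleaner conceptually and more robust --- once purely large is verified (your middle paragraph does this correctly, case-splitting on $B=\cK$ versus $B$ Kirchberg), Elliott--Kucerovsky handles both cases at once. One small remark: your invocation of the corona factorization property is superfluous, since you verify purely large directly rather than appealing to the Kucerovsky--Ng result that CFP makes every full extension absorbing; the original Elliott--Kucerovsky theorem needs only $\sigma$-unital stable $B$.
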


\begin{proof}
Let $\ftn{ \alpha_{0} }{ A }{ \mathcal{M} (B) }$ be a homomorphism with $\tau_{0} = \pi \circ \alpha_{0}$.  Extend $\tau$ and $\alpha_{0}$ to the unitization of $\widetilde{A}$, and denote these extensions by $\ftn{ \widetilde{\tau} }{\widetilde{A} }{ \mathcal{Q} (B) }$ and $\ftn{ \widetilde{ \alpha }_{0} }{ \widetilde{A} } { \mathcal{M} (B) }$ respectively.  

We claim that $\widetilde{\tau}$ is injective.  Let $y \in \ker( \widetilde{\tau} )$.  Then $\tau ( y x ) = \widetilde{\tau} ( y ) \widetilde{ \tau } ( \iota_{A } (x) ) = 0$ and $\tau ( x y ) = \widetilde{ \tau} ( \iota_{A} (x ) ) \widetilde{ \tau } (y) =0$ for all $x \in A$.  Since $\tau$ is injective, we have that $yx = xy = 0$ for all $x \in A$.  Since $A$ is non-unital, $A$ is an essential ideal of $\widetilde{A}$.  Hence, $y = 0$.  This proves our claim.    

Set $E = \pi^{-1} ( \widetilde{\tau}( \widetilde{A} ) ) \subseteq \mathcal{M}(B)$.  Since $\widetilde{\tau}$ is injective, we may define a surjective homomorphism $\ftn{ \psi }{ E }{ \widetilde{A} }$ by $\psi (x) = \widetilde{\tau}^{-1} ( \pi (x) )$.  Define $\ftn{ \eta }{ E }{ \mathcal{M}(B) }$ by $\eta (x ) = \widetilde{\alpha}_{0} \circ \psi(x)$.  Then $\eta$ is a unital homomorphism such that $\eta( E \cap B ) = \{ 0 \}$.  Let $s_{1}$ and $s_{2}$ be isometries such that $1_{ \mathcal{M}(B) } = s_{1} s_{1}^{*} + s_{2} s_{2}^{*}$.  By \cite[Corollary~2]{wa: notes on extensions} and \cite[Proposition~7]{ek: purely infinite simple}, there exists a unitary $u \in \mathcal{M}(B)$ such that $u (s_{1} x s_{1}^{*} + s_{2} \eta(x) s_{2}^{*} ) u^{*} - x \in B$ for all $x \in E$.

We claim $u$ implements a unitary equivalence between $\tau$ and $\tau \oplus \tau_{0}$.  Let $a \in A$.  Choose $x \in E$ such that $\pi(x) = (\widetilde{\tau} \circ \iota_{A})(a)$.  Note that 
\[
\pi \circ \eta (x) = \pi \circ \widetilde{\alpha}_{0} \circ \psi (x) = (\pi \circ \widetilde{\alpha}_{0} )(\widetilde{\tau}^{-1} ( \pi(x) )  )= (\pi \circ \widetilde{\alpha}_{0} \circ \iota_{A} )(a).
\] 
Then 
\begin{align*}
&\pi(u) \left( \tau(a) \oplus \tau_{0} (a)  \right) \pi(u)^{*} \\
&\quad =  \pi( u ) \left( \pi( s_{1} ) \widetilde{ \tau }( \iota_{A}(a) ) \pi( s_{1}^{*} ) + \pi( s_{2}) ( \pi \circ \widetilde{\alpha}_{0} \circ \iota_{A}) (a) \pi( s_{2})^{*} \right)  \pi(u)^{*} \\
&\quad = \pi (u ( s_{1} x s_{1}^{*} + s_{2} \eta(x) s_{2}^{*})u^{*} ) \\
&\quad = \pi(x) \\
&\quad = (\widetilde{\tau} \circ \iota_{A})(a) \\
&\quad = \tau(a). 
\end{align*}
Hence, $\tau \oplus \tau_{0} \sim_{u} \tau$, proving the first part of the proposition.

Suppose $e_{i} : 0 \to B \to E_{i} \to A \to 0$ is an essential extension for $i = 1, 2$ and $[ \tau_{e_{1}} ] = [ \tau_{e_{2}} ]$ in $\mathrm{Ext} ( A, B )$.  By the discussion before the proposition, there exist trivial extensions $\ftn{ \tau_{1}', \tau_{2}' }{A}{\mathcal{Q}(B)}$ such that $\tau_{e_{1}} \oplus \tau_{1}' \sim_{u} \tau_{e_{2}} \oplus \tau_{2}'$.  By the first part of the proposition, we have that $\tau_{e_{1}} \sim_{u} \tau_{e_{1}} \oplus \tau_{1}'$ and $\tau_{e_{2}} \oplus \tau_{2}' \sim_{u} \tau_{e_{2}}'$.  Therefore, $\tau_{e_{1}} \sim_{u} \tau_{e_{2}}$.  By \cite[\S~3.2]{kkjkt:ek}, $E_{1} \cong E_{2}$.
\end{proof}

\subsection{Corners of graph algebras}  We also need some results involving corners of graph algebras.  The general case will be worked out in \cite{seajger:hcgc}.  For the convenience of the reader, we will prove the case that will suit our purposes (see Proposition~\ref{p:  corners of graph algebras}).

Recall that if $E = ( E^{0} , E^{1} , r , s )$ is a graph, the C*-algebra $C^{*} (E)$ associated to $E$ is the universal C*-algebra generated by $\{ p_{v} : v \in E^{0} \} \sqcup \{ s_{e} : e \in E^{1} \}$ subject to the relations
\begin{itemize}
\item[(i)] $p_{v} p_{w} = \delta_{v, w} p_{v}$ for all $v, w \in E^{0}$;

\item[(ii)] $s_{e}^{*} s_{f} = \delta_{e, f} p_{r(e)}$ for all $e, f \in E^{1}$;

\item[(iii)] $s_{e} s_{e}^{*} \leq p_{s(e)}$ for all $e \in E^{1}$; and 

\item[(iv)] $p_{v} = \sum_{ e \in s^{-1} (v) } s_{e} s_{e}^{*}$ for all $v \in E^{0}$ with $0 < | s^{-1} (v) | < \infty$. 
\end{itemize}

A \emph{loop} in $E$ is a path $\alpha = e_{1} \cdots e_{n}$ with $s( e_{1} ) = s( e_{n} )$ and we say that $s(e_{1})$ is the \emph{base point of $\alpha$}.  A \emph{simple loop} in $E$ is a loop $\alpha = e_{1} \cdots e_{n}$ such that $s( e_{i} ) \neq s( e_{j} )$ for $i \neq j$.  We say that $E$ satisfies \emph{Condition~(K)} if every vertex is either the base point of at least two simple loops or is not the base point of a loop.  It is well-known that if $A$ is a Cuntz-Krieger algebra, then $A$ is isomorphic to $C^{*} (E)$, where $E$ is a finite graph with no sinks.  If, in addition, $A$ is purely infinite, then $E$ will also satisfy Condition~(K).

\begin{proposition}\label{p:  corners of graph algebras}
Let $E$ be a graph with finitely many vertices.  Suppose there exists a vertex $w$ in $E$ such that 
\begin{itemize}
\item[(i)] $\{w\}$ is a hereditary and saturated subset of $E^{0}$;

\item[(ii)] $| \{ e \in E^{1} :  s(e) = w \} |$ is either equal to $0$ or $\infty$; 

\item[(iii)] for every $v \in E^{0} \setminus \{w\}$, there are finitely many edges from $v$ to $w$ and there exists at least one $v \in E^{0} \setminus \{w\}$ such that there exists an edge from $v$ to $w$; and 

\item[(iv)] every vertex $v \in E^{0} \setminus \{w\}$ emits finitely many edges and is the base point of at least two loops of length one.
\end{itemize}
Then for every full projection $p \in C^{*} ( E ) \otimes \cK$, we have that $p ( C^{*} (E) \otimes \cK ) p$ is isomorphic to a graph algebra.  Consequently, if $A$ is a unital C*-algebra such that $A \otimes \cK \cong C^{*} (E) \otimes \cK$, then $A$ is isomorphic to a graph algebra.
\end{proposition}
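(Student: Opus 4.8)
The second assertion follows immediately from the first: if $A$ is unital and $\Phi\colon A\otimes\cK\to C^*(E)\otimes\cK$ is an isomorphism, then $1_A\otimes e_{11}$ is a full projection in $A\otimes\cK$ with $(1_A\otimes e_{11})(A\otimes\cK)(1_A\otimes e_{11})\cong A$, so $p:=\Phi(1_A\otimes e_{11})$ is a full projection in $C^*(E)\otimes\cK$ and $A\cong p(C^*(E)\otimes\cK)p$ is a graph algebra by the first assertion. To prove the first assertion, the plan is in two steps: first replace $p$ by a very explicit equivalent projection, then write down a graph whose C*-algebra is the corresponding corner.

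\emph{Step 1: reduction of the projection.} It is well known (Ara--Moreno--Pardo, and its extensions to arbitrary graphs) that the monoid of Murray--von Neumann equivalence classes of projections in $C^*(E)\otimes\cK$ is naturally isomorphic to the graph monoid $M_E$ --- the abelian monoid generated by symbols $a_v$, $v\in E^0$, subject to $a_v=\sum_{s(e)=v}a_{r(e)}$ for every regular vertex $v$ --- via $[p_v]\mapsto a_v$. Writing $[p]=\sum_v n_v a_v$ with $n_v\geq 0$ supported on a finite set $S_0$, I would like to rewrite this with the support hereditary and with all multiplicities $\geq 1$. This is where the hypotheses enter: by (iv) every vertex of $E^0\setminus\{w\}$ emits only finitely many edges, hence is regular, and by (i)--(ii) every edge with source $w$ is a loop at $w$; consequently, whenever an edge $e$ has source $s(e)$ inside a set $S$ but $r(e)\notin S$, the vertex $s(e)$ is regular, so the relation $a_{s(e)}=\sum_{s(e')=s(e)}a_{r(e')}$ may be applied in $M_E$ to enlarge the support by $r(e)$ without altering $[p]$. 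Iterating --- which terminates since $E^0$ is finite and each step enlarges the support --- yields $[p]=\sum_{v\in H}m_v a_v$ with $m_v\geq 1$ for every $v$, where $H$ is the hereditary closure of $S_0$ (each newly adjoined vertex is $r(e')$ for an edge $e'$ issuing from a support vertex, and so acquires multiplicity $\geq 1$). Put $N=\sum_{v\in H}m_v$ and $q=\bigoplus_{v\in H}\bigoplus_{i=1}^{m_v}p_v\in M_N(C^*(E))$. Then $[q]=[p]$, so $p$ and $q$ are Murray--von Neumann equivalent in the stable algebra $C^*(E)\otimes\cK$, whence $p(C^*(E)\otimes\cK)p\cong q(C^*(E)\otimes\cK)q=qM_N(C^*(E))q$. (The degenerate case $E=\{w\}$, where $C^*(E)$ is $\Cz$ or $\cO_\infty$, is dealt with by hand.)

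\emph{Step 2: the graph.} I would produce a graph $F$ with $C^*(F)\cong qM_N(C^*(E))q$. Take $F^0=\{v^{(i)}:v\in H,\ 1\leq i\leq m_v\}$ with a fixed enumeration, and for every $e\in E^1$ with $s(e)\in H$ and every $1\leq i\leq m_{s(e)}$ a single edge $e^{(i)}\colon s(e)^{(i)}\to r(e)^{(1)}$ --- well defined because $H$ is hereditary, so $r(e)\in H$. There is a $*$-homomorphism $C^*(F)\to qM_N(C^*(E))q$ sending $p_{v^{(i)}}\mapsto p_v\otimes e_{ii}$ and $s_{e^{(i)}}\mapsto s_e\otimes e_{i,1}$ (the second index labelling the position of $r(e)^{(1)}$): the Cuntz--Krieger relations are checked directly, the only substantive point being the vertex relation at a regular $v^{(i)}$ --- and $v^{(i)}$ is regular precisely when $v$ is, with out-edges in bijection with those of $v$ --- which reduces to $\sum_{s(e)=v}s_es_e^*=p_v$ in $C^*(E)$. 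The map is surjective: since $H$ is hereditary, every path of $E$ starting in $H$ stays in $H$ and lifts to a path of $F$, so the image contains all $s_\mu s_\nu^*\otimes e_{ik}$ with $s(\mu),s(\nu)\in H$, and these span $qM_N(C^*(E))q$. It is injective by the gauge-invariant uniqueness theorem, being equivariant for the gauge actions and sending each vertex projection to a nonzero projection. Hence $C^*(F)\cong qM_N(C^*(E))q\cong p(C^*(E)\otimes\cK)p$, as desired.

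The main obstacle is Step 1. It is tempting to enlarge the support of $[p]$ simply by adjoining vertex projections to $q$, but this fails: a properly infinite summand need not absorb them, since $\precsim$ is not antisymmetric on projections of $C^*(E)\otimes\cK$ (already in $\cO_\infty$ one has $1\oplus 1\precsim 1$ but $1\oplus 1\not\sim 1$). One is therefore forced to enlarge the support by rewriting inside $M_E$ using the Cuntz--Krieger relations, which only works because the hypotheses force the relevant vertices to be regular --- this is the precise role of (i), (ii) and (iv). Once the support is hereditary, Step 2 is a routine verification with the gauge-invariant uniqueness theorem.
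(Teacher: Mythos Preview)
Your two-step plan mirrors the paper's, but both steps have genuine gaps tied to the singular vertex $w$.

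In Step~1, the monoid you call $M_E$ is \emph{not} $V(C^*(E)\otimes\cK)$ when $w$ is an infinite emitter. The correct extension of Ara--Moreno--Pardo to arbitrary graphs (this is exactly what \cite{HLMRT: non stable Ktheory} provides, and what the paper invokes) needs extra generators for the gap projections $p_w-\sum_{e\in T}s_es_e^*$ with $T\subseteq s^{-1}(w)$ finite; already in $\cO_\infty$ the class $[1-s_1s_1^*]$ is not a nonnegative multiple of $[1]$. So one cannot simply write $[p]=\sum_v n_va_v$. The paper begins from the correct normal form including such gap terms and then uses hypotheses (iii) and (iv) --- which your argument never invokes --- to absorb them: a vertex $v_0\neq w$ with an edge to $w$ and two loops satisfies $p_{v_0}\sim np_w\oplus p_{v_0}\oplus(\cdots)$ for any $n$, while $|T|\,p_w\oplus(p_w-\sum_{e\in T}s_es_e^*)\sim p_w$, so one can manufacture enough copies of $p_w$ to swallow each gap projection and reach $q=\bigoplus_{v\in E^0}m_vp_v$ with all $m_v>0$.

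Step~2 also fails as written whenever $m_w\geq 2$. Your graph $F$ routes every edge to the \emph{first} copy $r(e)^{(1)}$, so $w^{(j)}$ for $j\geq 2$ receives no edges. If $w$ is a sink then $w^{(2)}$ is isolated and $p_{w^{(2)}}$ is central in $C^*(F)$, whereas its image $p_w\otimes e_{(w,2),(w,2)}$ is not central in $qM_N(C^*(E))q$. If $w$ is an infinite emitter then the $(w,2)$--$(w,1)$ block of the image lies in $\overline{\sum_{s(e)=w}s_eC^*(E)}\otimes e_{(w,2),(w,1)}$, which does not contain $p_w\otimes e_{(w,2),(w,1)}$ because $p_w\notin\overline{\sum_{s(e)=w}s_eC^*(E)}$. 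Either way $p_w\otimes e_{(w,i),(w,j)}$ with $i\neq j$ is missed, so your map is not surjective. The paper defers to the more careful splitting construction of \cite{seaer: corners of CK algebras} at this point.
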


\begin{proof}
Let $\{ e_{ij} \}$ be a system of matrix units for $\cK$.  Throughout the proof, if $p$ is a projection in $C^{*} (E)$ and $n \in \mathbb{N}$, then set $np = \overbrace{p \oplus \cdots \oplus p }^{n}$ in $C^{*} (E) \otimes \cK$.  Let $\{ p_{v} , s_{e} : v \in E^{0} , e \in E^{1} \}$ be a Cuntz-Krieger $E$-family generating $C^{*} (E)$.  Since the only vertex in $E$ that is a singular vertex, i.e., emits no edges or infinitely many edges, is $w$,  by \cite[Theorem~3.4 and Corollary~3.5]{HLMRT: non stable Ktheory}, 
\begin{equation}\label{eq: corner}
p \sim \left( \bigoplus_{v \in S } n_{v} p_{v} \right) \oplus n_{1} \left( p_{w} - \sum_{ e \in T_{1} }  s_{e} s_{e}^{*}  \right) \oplus \cdots \oplus n_{k} \left( p_{w} - \sum_{ e \in T_{k} } s_{e} s_{e}^{*}  \right),
\end{equation}
 where $n_{v} > 0$ for all $v \in S$, $n_{i} \geq 0$ for all $i$, $S \subseteq E^{0} \setminus \{w\}$, and $T_{i}$ is a finite (possibly empty) subset of $s^{-1} ( w )$ for all $i$.  Arguing as in \cite[Lemma~4.6]{seaer: corners of CK algebras}, we have that the projection on the right hand side of \eqref{eq: corner} is Murray-von Neumann equivalent to 
 \[
q = \bigoplus_{ v \in E^{0} } m_{v} p_{v}
\]
where $m_{v} > 0$ for all $v \in E^{0}$.  We use the fact that if $S$ is a finite subset of $s^{-1} (w)$, then 
\[
| S | p_{w} \oplus \left( p_{w} - \sum_{ e \in S } s_{e} s_{e}^{*} \right) \sim \left( \sum_{ e \in S } s_{e} s_{e}^{*} \right) \oplus \left( p_{w} - \sum_{ e \in S } s_{e} s_{e}^{*} \right) \sim p_{w}
\] 
and the fact that if $v_{0} \in E^{0} \setminus \{ w \}$ with $s^{-1} (v_{0} )  \cap r^{-1} (w) \neq \emptyset$, then for any $n$, we have that $p_{v_{0}} \sim n p_{w} \oplus p_{v_{0}} \oplus \left( \bigoplus_{ v \in E^{0} } m_{v}' p_{v} \right)$ for $m_{v}' \geq 0$.  Now, arguing as in  \cite[Proposition~4.7]{seaer: corners of CK algebras}, we have that $q ( C^{*} (E) \otimes \cK ) q$ is isomorphic to a graph algebra.  Since $p \sim q$, we have that $p ( C^{*} (E) \otimes \cK ) p \cong q ( C^{*} (E) \otimes \cK ) q$.  Therefore, $p ( C^{*} (E) \otimes \cK ) p$ is isomorphic to a graph algebra. 

For the last part of the proposition note that $A \cong p ( C^{*} (E) \otimes \cK ) p$, where $p$ is the projection given by the image of $1_{A} \otimes e_{11}$ under some isomorphism from $A \otimes \cK$ to $C^{*} (E) \otimes \cK$.  Since $1_{A} \otimes e_{11}$ is full in $A \otimes \cK$, we have that $p$ is full in $C^{*} (E) \otimes \cK$.
\end{proof}

\subsection{Semigroup C*-algebras and graph algebras}  We now determine when a C*-algebra associated to an Artin monoid is isomorphic to a graph algebra.  To do this, we need to determine when an extension of two graph algebras is isomorphic to a graph algebra.   In spite of substantial effort 
the extension problem for graph algebras has not be completely resolved even for the single non-trivial ideal case.  Moreover, the results in the literature are not sufficient for our purposes.  The following ad hoc result will give us what we need.

\begin{lemma}\label{l: isomorphism ksix}
For each $i$, let $A_{i}$ be a separable, nuclear C*-algebra with an essential ideal $I_{i}$ such that $I_{i}$ is isomorphic to either $\cK$ or a purely infinite simple C*-algebra with trivial $K_{1}$ group, $A_{i} / I_{i}$ satisfies the Universal Coefficient Theorem, and $K_{1} ( A_{i} / I_{i} ) = \gekl{0}$ or $K_{0} (A_{i} / I_{i})$ is a free group (possibly $K_{0} (A_i / I_i) = \gekl{0}$).  Suppose there exist isomorphisms $\ftn{\beta }{ I_{1} \otimes \cK }{ I_{2} \otimes \cK }$, $\ftn{\alpha}{ ( A_{1} / I_{1}) \otimes \cK }{ (A_{2} / I_{2}) \otimes \cK}$, and $\ftn{\eta_{*}}{ K_{*} ( A_{1} \otimes \cK ) }{ K_{*} (A_{2} \otimes \cK) }$ such that $\ftn{ ( K_{*} ( \beta ), \eta_{*} , K_{*} ( \alpha ) ) }{ K_{\mathrm{six} } ( A_{1} \otimes \cK ; I \otimes \cK ) }{ K_{\mathrm{six}} ( A_{2} \otimes \cK ; I_{2} \otimes \cK ) }$ is an isomorphism. Then $A_{1} \otimes \cK \cong A_{2} \otimes \cK$.
\end{lemma}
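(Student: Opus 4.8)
The plan is to exhibit both $A_1\otimes\cK$ and $A_2\otimes\cK$ as extension algebras of one common pair of C*-algebras, to show that these two extensions define the same class in $\Ext$, and then to invoke Proposition~\ref{p:absorbing extensions}. First I would dispose of the trivial case $A_1/I_1=0$: then $A_1=I_1$, applying the hypothesis to $\alpha$ forces $A_2/I_2=0$, and $\beta$ already gives the conclusion. So assume $A_i/I_i\neq 0$. Put $B:=I_1\otimes\cK$ and $A:=(A_1/I_1)\otimes\cK$. Since $I_i$ is an essential ideal of $A_i$ and stabilisation preserves essentiality (every ideal of $A_i\otimes\cK$ has the form $J\otimes\cK$), the extension
\[
e_i:\quad 0 \to I_i\otimes\cK \to A_i\otimes\cK \to (A_i/I_i)\otimes\cK \to 0
\]
is essential for $i=1,2$. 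Transporting $e_2$ along $\alpha$ and $\beta$, I may and will regard $e_1$ and $e_2$ as extensions of the same algebra $A$ by the same algebra $B$; with this identification the hypothesis says that a transported copy of $\eta_*$ is an isomorphism $K_{\mathrm{six}}(e_1)\to K_{\mathrm{six}}(e_2)$ which is the identity on $K_*(B)$ and on $K_*(A)$. Note that $B$ is separable, nuclear and stable, and is isomorphic to $\cK$ (if $I_1\cong\cK$) or to a stable, nuclear, purely infinite simple C*-algebra; in all cases $K_1(B)=0$. Also $A$ is separable, nuclear, non-unital, and satisfies the UCT.

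Let $\tau_i\colon A\to\mathcal{Q}(B)$ be the injective Busby map of $e_i$ and $\kappa_i=[\tau_i]\in\Ext(A,B)$ its class. By Proposition~\ref{p:absorbing extensions} it suffices to show $\kappa_1=\kappa_2$. Since $A$ is separable nuclear, every extension of $A$ is semisplit, so $\Ext(A,B)\cong\kk^1(A,B)$; and since $A$ satisfies the UCT and $K_1(B)=0$, the universal coefficient sequence collapses to a short exact sequence
\[
0 \to \Ext^1_{\Zz}(K_0(A),K_0(B)) \to \kk^1(A,B) \to \Hom(K_1(A),K_0(B)) \to 0 ,
\]
whose surjection $\gamma$ sends $\kappa_i$ to the index (boundary) map $K_1(A)\to K_0(B)$ of $e_i$ (the exponential map of $e_i$ lands in $K_1(B)=0$ and is discarded). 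The square of the two six-term exact sequences containing these boundary maps commutes, and after our identification its vertical arrows are the identity; hence $\gamma(\kappa_1)=\gamma(\kappa_2)$, so $\kappa_1-\kappa_2\in\Ext^1_{\Zz}(K_0(A),K_0(B))$.

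It remains to see that this difference vanishes, and here I would use the dichotomy in the hypothesis. If $K_0(A)\cong K_0(A_1/I_1)$ is free, then $\Ext^1_{\Zz}(K_0(A),K_0(B))=0$ and we are done at once. If instead $K_1(A)\cong K_1(A_1/I_1)=0$, then $\Hom(K_1(A),K_0(B))=0$, so $\gamma=0$ and the left-hand map above is an isomorphism $\Ext^1_{\Zz}(K_0(A),K_0(B))\cong\kk^1(A,B)$; under it $\kappa_i$ corresponds, by naturality of the UCT boundary, to the class of the abelian-group extension
\[
0 \to K_0(B) \to K_0(A_i\otimes\cK) \to K_0(A) \to 0
\]
arising from $e_i$. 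The isomorphism $\eta_0$, being the identity on $K_0(B)$ and inducing the identity on $K_0(A)$, is an isomorphism of these two group extensions, so they represent the same element of $\Ext^1_{\Zz}(K_0(A),K_0(B))$ and $\kappa_1=\kappa_2$ again. In either case $[\tau_1]=[\tau_2]$ in $\Ext(A,B)$, so Proposition~\ref{p:absorbing extensions} yields $A_1\otimes\cK\cong A_2\otimes\cK$.

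I expect the only genuinely delicate point to be the last step in the case $K_1(A)=0$: one must know that the $\kk^1$-class of $e_i$ (equivalently the strong $\Ext$-class relevant to Proposition~\ref{p:absorbing extensions}) is completely determined by the induced extension of $K_0$-groups, so that the purely $K_0$-level information carried by $\eta_0$ really suffices. This is the naturality of the universal coefficient sequence and should be citable from the same circle of results the paper already uses — the universal coefficient theorems of \cite{ek:nkmkna} and \cite{rbmk:uctcfts}, the exact sequence of \cite[Theorem~2]{lgbmd:ecq}, and \cite[\S~3.2]{kkjkt:ek} for passing from coincidence of extension classes to isomorphism of the extension algebras. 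The remaining ingredients — essentiality after stabilisation, transport of $e_2$ along $\alpha$ and $\beta$, and verifying the hypotheses of Proposition~\ref{p:absorbing extensions} — are routine, and the order data on $K_0$ carried by $K_{\mathrm{six}}$ is not needed for this $*$-isomorphism statement.
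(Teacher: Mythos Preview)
Your proposal is correct and follows essentially the same approach as the paper's own proof: transport both stabilised extensions to a common pair of C*-algebras (the paper does this by pushing forward one extension along $\beta$ and pulling back the other along $\alpha$), use the UCT with $K_1$ of the ideal vanishing to compare the $\Ext$-classes in the two cases of the dichotomy, and then invoke Proposition~\ref{p:absorbing extensions}. Your write-up is in fact slightly more careful than the paper's in checking the side conditions (essentiality after stabilisation, non-unitality of $A$, the degenerate case $A_1/I_1=0$), but the structure of the argument is the same.
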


\begin{proof}
Let $e_{1} : 0 \to I_{2} \otimes \cK \to B_{1} \to (A_{1} / I_{1}) \otimes \cK \to 0$ be the extension obtained by pushing forward the extension $0 \to I_{1} \otimes \cK \to A_{1} \otimes \cK \to (A_{1} / I_{1}) \otimes \cK \to 0$ via the isomorphism $\beta$ and let $e_{2}  : 0 \to I_{2} \otimes \cK \to B_{2} \to (A_{1} / I_{1}) \otimes \cK \to 0$ be the extension obtained by pulling back the extension $0 \to I_{2} \otimes \cK \to A_{2} \otimes \cK \to (A_{2} / I_{2}) \otimes \cK \to 0$ via the isomorphism $\alpha$.  Note that there exist isomorphisms $\ftn{ \phi_{1} }{ A_{1} \otimes \cK }{ B_{1} }$ and $\ftn{ \phi_{2} }{ B_{2} }{ A_{2} \otimes \cK }$ such that $\ftn{ ( K_{*} ( \beta ) , K_{*} ( \phi_{1} ), K_{*} ( \id_{ (A_{1} / I_{1} ) \otimes \cK } ) ) }{ K_{ \mathrm{six}} ( A_{1} \otimes \cK ; I_{1} \otimes \cK ) }{ K_{\mathrm{six}} ( B_{1} ; I_{2} \otimes \cK )}$ and $\ftn{ ( K_{*} ( \id_{ I_{2} \otimes \cK } ), K_{*} ( \phi_{2} ), K_{*} ( \alpha ) ) }{ K_{\mathrm{six}} ( B_{2} ; I_{2} \otimes \cK ) }{ K_{\mathrm{six}} ( A_{2} \otimes \cK ; I_{2} \otimes \cK  ) }$ are isomorphisms.  Then
$$
( K_{*} ( \id_{ I_{2} \otimes \cK } ), K_{*} ( \phi_{2}^{-1} ) \circ\eta_{*} \circ K_{*} ( \phi_{1}^{-1}) , K_{*} ( \id_{ (A_{1} / I_{1} ) \otimes \cK } ) )
$$
is an isomorphism from $K_{\mathrm{six}} ( B_{1} ; I_{2} \otimes \cK )$ to $K_{ \mathrm{six}} ( B_{2} ; I_{2} \otimes \cK )$.

We claim that $[ \tau_{e_{1}} ] = [ \tau_{e_{2}} ]$ in $\mathrm{Ext} ( (A_{1} / I_{1}) \otimes \cK , I_{2} \otimes \cK )$.  Since $A_{1} / I_{1}$ satisfies the Universal Coefficient Theorem, we may identify $\mathrm{Ext} ( (A_{1} / I_{1}) \otimes \cK , I_{2} \otimes \cK )$ with $\kk^{1} ( (A_{1} / I_{1}) \otimes \cK , I_{2} \otimes \cK )$.  Note that $\mathrm{Ext}_{\mathbb{Z}}^{1} ( K_{1} ( (A_{1} / I_{1}) \otimes \cK ) , K_{1} ( I_{2} \otimes \cK ) ) = \gekl{0}$ since $K_{1} ( I_{2} ) = \gekl{0}$.   Suppose $K_{1} ( A_{1} / I_{1} ) = \gekl{0}$.  Then $K_{1} ( \tau_{e_{i}} ) = \gekl{0}$.   Since $K_{1} ( I_{2} ) = \gekl{0}$, we have that $K_{0} ( \tau_{e_{i}} ) = \gekl{0}$.  Thus $K_{*} ( \tau_{e_{i}} ) = \gekl{0}$.  By the Universal Coefficient Theorem, $[ \tau_{e_{i}} ]$ can be identified with the element in $\mathrm{Ext}_{\mathbb{Z}}^{1} ( K_{0} ( (A_{1} / I_{1}) \otimes \cK ) , K_{0} ( I_{2} \otimes \cK ) )$ given by $K_{ \mathrm{six} } ( B_{i} ; I_{2} \otimes \cK )$. As $( K_{*} ( \id_{ I_{2} \otimes \cK } ), K_{*} ( \phi_{2}^{-1} ) \circ\eta_{*} \circ K_{*} ( \phi_{1}^{-1}) , K_{*} ( \id_{ (A_{1} / I_{1}) \otimes \cK } ) )$ is an isomorphism from $K_{\mathrm{six}} ( B_{1} ; I_{2} \otimes \cK )$ to $K_{ \mathrm{six}} ( B_{2} ; I_{2} \otimes \cK )$ we have that $K_{ \mathrm{six} } ( B_{1} ; I_{2} \otimes \cK )$ and $K_{ \mathrm{six} } ( B_{2} ; I_{2} \otimes \cK )$ induce the same element in $\mathrm{Ext}_{\mathbb{Z}}^{1} ( K_{0} ( (A_{1} / I_{1}) \otimes \cK ) , K_{0} ( I_{2} \otimes \cK ) )$. Hence, $[ \tau_{e_{1}} ] = [ \tau_{e_{2}} ]$ in $\mathrm{Ext} ( (A_{1} / I_{1}) \otimes \cK , I_{2} \otimes \cK )$.  Suppose $K_{0} ( A_{1} / I_{1} )$ is a free group (possibly the zero group).  By the Universal Coefficient, $[ \tau_{e_{i}} ]$ is completely determined by $K_{*} ( \tau_{e_{i}} )$. Since $( K_{*} ( \id_{ I_{2} \otimes \cK } ), K_{*} ( \phi_{2}^{-1} ) \circ\eta_{*} \circ K_{*} ( \phi_{1}^{-1}) , K_{*} ( \id_{ (A_{1} / I_{1}) \otimes \cK } ) )$ is an isomorphism from $K_{\mathrm{six}} ( B_{1} ; I_{2} \otimes \cK )$ to $K_{ \mathrm{six}} ( B_{2} ; I_{2} \otimes \cK )$, we have that $K_{*} ( \tau_{e_{1}} ) = K_{*} ( \tau_{e_{2}} )$.  Hence, $[ \tau_{e_{1}} ] = [ \tau_{e_{2}} ]$ in $\mathrm{Ext} ( (A_{1} / I_{1}) \otimes \cK , I_{2} \otimes \cK )$.

In both cases, we have shown that $[ \tau_{e_{1}} ] = [ \tau_{e_{2}} ]$ in $\mathrm{Ext} ( (A_{1} / I_{1} )\otimes \cK , I_{2} \otimes \cK )$, proving our claim.  By Proposition~\ref{p:absorbing extensions}, we have that $B_{1} \cong B_{2}$.  Therefore, $A_{1} \otimes \cK \cong A_{2} \otimes \cK$.
\end{proof}

\begin{lemma}\label{lem:AFOinftyideal}
Let $A$ be a unital, separable, nuclear C*-algebra with an essential ideal $I$ such that $I \cong \cK$ or $I \cong \mathcal{O}_{\infty} \otimes \cK$ and $A / I$ is isomorphic to a purely infinite Cuntz-Krieger algebra.  If $K_{1} ( A / I ) = \gekl{0}$ or $K_{0} (A / I)$ is a free group (possibly $K_{0} (A / I) = \gekl{0}$), then $A$ is isomorphic to a graph algebra.
\end{lemma}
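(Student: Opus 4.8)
The plan is to realize $A$ as a full corner of $C^*(E)\otimes\cK$ for a suitable finite graph $E$ meeting the hypotheses of Proposition~\ref{p:  corners of graph algebras}, and then to invoke the last sentence of that proposition. More precisely, I would build $E$ so that $C^*(E)$ carries an essential ideal $J$ with $J\otimes\cK\cong I\otimes\cK$, with $(C^*(E)/J)\otimes\cK\cong(A/I)\otimes\cK$, and with the $K$-theoretic six-term exact sequence of $0\to J\to C^*(E)\to C^*(E)/J\to 0$ isomorphic — through these identifications — to that of $0\to I\to A\to A/I\to 0$. Lemma~\ref{l: isomorphism ksix}, applied with $A_1=A$, $A_2=C^*(E)$, $I_1=I$, $I_2=J$, then yields $A\otimes\cK\cong C^*(E)\otimes\cK$, and since $A$ is unital the last assertion of Proposition~\ref{p:  corners of graph algebras} gives that $A$ is a graph algebra. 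Note that the hypotheses of Lemma~\ref{l: isomorphism ksix} are automatic here: both $I$ and $J$ will be $\cK$ or $\cO_{\infty}\otimes\cK$ (so $\cK$ or purely infinite simple with vanishing $K_1$); $A/I$ and $C^*(E)/J$ are Cuntz--Krieger algebras, hence satisfy the UCT; and ``$K_1(A/I)=0$ or $K_0(A/I)$ free'' is part of the hypothesis.

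\emph{Construction of $E$.} Since $A/I$ is a purely infinite Cuntz--Krieger algebra, it is isomorphic to $C^*(F_0)$ for a finite graph $F_0$ with no sinks. After a sequence of standard moves on graphs preserving the stable isomorphism class of the associated $C^*$-algebra, I may replace $F_0$ by a finite graph $F$ with $C^*(F)\otimes\cK\cong(A/I)\otimes\cK$ in which every vertex is the base point of at least two loops of length one; in particular every vertex of $F$ has out-degree at least $2$. Let $E$ be obtained from $F$ by adjoining one new vertex $w$, keeping all edges of $F$, adding $m_v\ge 1$ edges from $v$ to $w$ for each vertex $v$ of $F$ (the multiplicities $m_v$ chosen below), and, if $I\cong\cO_{\infty}\otimes\cK$, also attaching infinitely many loops at $w$; if $I\cong\cK$ then $w$ is a sink. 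Because $F$ has no sinks and every vertex of $F$ emits an edge to $w$, the singleton $\{w\}$ is hereditary and saturated, has no breaking vertices, and is contained in every nonzero hereditary saturated subset of $E^0$; hence the ideal $J$ of $C^*(E)$ generated by $p_w$ is the unique minimal nonzero ideal, so it is essential. Since infinitely many paths of $E$ terminate at $w$ and the corner $p_wC^*(E)p_w$ is $\Cz$, respectively $\cO_{\infty}$, in the two cases, one gets $J\cong\cK$, respectively $J\cong\cO_{\infty}\otimes\cK$. Moreover $C^*(E)/J\cong C^*(F)$, and conditions (i)--(iv) of Proposition~\ref{p:  corners of graph algebras} are straightforward to verify.

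\emph{Matching the six-term sequences.} Fix isomorphisms $K_*(A/I)\cong K_*(C^*(F))$ (coming from an $\cK$-stable isomorphism of these algebras) and $K_0(I)\cong\Zz\cong K_0(J)$. As $K_1(I)=K_1(J)=0$, each six-term sequence collapses to $0\to K_1(A)\to K_1(A/I)\xrightarrow{\partial^A}\Zz\to K_0(A)\to K_0(A/I)\to 0$ and its analogue for $E$; by the five lemma it suffices to arrange that the index map $\partial^E$ agrees with $\partial^A$ and, when $K_1(A/I)=0$, that the induced extension of abelian groups $0\to\Zz\to K_0(C^*(E))\to K_0(C^*(F))\to 0$ agrees with the one attached to $A$ (when $K_0(A/I)$ is free the relevant $\Ext^1_{\Zz}$-group vanishes, so nothing is needed there, exactly as in the proof of Lemma~\ref{l: isomorphism ksix}). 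Describing the $K$-theory of $C^*(E)$ via its vertex matrix, one has $K_1(C^*(F))=\ker(I-A_F^{t})\subseteq\Zz^{F^0}$, the index map $\partial^E$ is $x\mapsto\langle m,x\rangle$ (up to the sign built into the chosen identification of $K_0(J)$), and the class of the $K_0$-extension is represented by $m=(m_v)$ in $\operatorname{coker}(I-A_F)$. Both of these depend only on the residue of $m$ modulo $\operatorname{im}(I-A_F)$, so what must be produced is a vector $m$ with all entries $\ge 1$ in a prescribed such coset. Here the normalization of $F$ is used: $(A_F-I)\mathbf{1}=-(I-A_F)\mathbf{1}$ lies in $\operatorname{im}(I-A_F)$ and, since every vertex of $F$ has out-degree at least $2$, has all entries $\ge 1$; adding a sufficiently large positive multiple of it to any representative of the coset produces the desired $m$. (When $K_1(A/I)=0$ the coset is $\operatorname{im}(I-A_F)$ translated by a lift of the prescribed $\Ext$-class; when $K_0(A/I)$ is free it is a representative of $\partial^A$ plus $\operatorname{im}(I-A_F)$; in both cases the same device works.)

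With the $m_v$ chosen in this way, $K_*(\beta)$, $K_*(\alpha)$ and the $\eta_*$ supplied by the five lemma form an isomorphism of six-term sequences, Lemma~\ref{l: isomorphism ksix} applies to give $A\otimes\cK\cong C^*(E)\otimes\cK$, and Proposition~\ref{p:  corners of graph algebras} finishes the argument. I expect the main obstacle to be the point in the previous paragraph: realizing the prescribed index map and $K_0$-extension class by a \emph{non-negative} choice of edge multiplicities $m_v$, which both forces and exploits the reduction to a graph $F$ with at least two loops at every vertex (a normalization anyway required by condition (iv) of Proposition~\ref{p:  corners of graph algebras}). Everything else — verifying (i)--(iv), and identifying $J$ and $C^*(E)/J$ — is routine graph-algebra bookkeeping.
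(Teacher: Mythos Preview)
Your proposal is correct and follows essentially the same strategy as the paper's proof: produce a graph $F$ with at least two loops at each vertex and $C^*(F)\otimes\cK\cong (A/I)\otimes\cK$; adjoin a single vertex $w$ (a sink, or with infinitely many loops, according to whether $I\cong\cK$ or $I\cong\cO_\infty\otimes\cK$) and finitely many edges from each $v\in F^0$ to $w$ so that the resulting $E$ has matching six-term sequence; apply Lemma~\ref{l: isomorphism ksix} to get $A\otimes\cK\cong C^*(E)\otimes\cK$; and conclude via Proposition~\ref{p:  corners of graph algebras}.

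The only substantive difference is packaging. The paper cites \cite[Theorem~4.4]{abk: range} and \cite[Proposition~8.3]{gr:classification of ck-algebras} for the normalization of $F$, and then invokes \cite[Lemma~5.4~(r1) and Proposition~5.5]{setkmtjw:rking} as a black box for the existence of $E$ with the prescribed $K_{\mathrm{six}}$-data. You instead unpack the latter step explicitly: identify the index map and the $\Ext$-class with the coset of the edge-multiplicity vector $m$ modulo $\operatorname{im}(I-A_F)$, and observe that adding a large multiple of $(A_F-I)\mathbf{1}$ (which has strictly positive entries precisely because every vertex of $F$ has out-degree at least two) lands you on a representative with all $m_v\ge 1$. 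This is exactly the mechanism behind the cited EKTW result, so your hands-on argument and the paper's citation are two presentations of the same idea; yours has the advantage of making transparent \emph{why} the two-loops-at-every-vertex normalization is needed both for Proposition~\ref{p:  corners of graph algebras}(iv) and for the positivity trick.
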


\begin{proof}
By \cite[Theorem~4.4]{abk: range} and \cite[Proposition~8.3]{gr:classification of ck-algebras}, there exists a finite graph $F$ such that each vertex of $F$ is the base point of at least two loop of length one and there exists an isomorphism $\ftn{ \phi }{ C^{*} (F) \otimes \cK }{ A / I \otimes \cK }$.  Let $\ftn{ \psi }{ C^{*} ( G ) \otimes \cK }{ I }$ be an isomorphism such that $K_{*} ( \psi ) = \id$, where $G$ is the graph $\{v\}$ with one vertex and no edges if $I \cong \cK$ and $G$ is the graph with one vertex $\{v\}$ with infinitely many edges when $I \cong \mathcal{O}_{\infty} \otimes \cK$.  By \cite[Lemma~5.4~(r1) and Proposition~5.5]{setkmtjw:rking}, there exists a graph $E$ with the properties that 
\begin{itemize}
\item[(1)] $E^{0} = G^{0} \sqcup F^{0}$,

\item[(2)] $E^{1}$ is the union of $G^{1}$ and $F^{1}$ together with a finite nonzero number of edges from each $w \in F^{0}$ to $v$, and

\item[(3)] there exist an isomorphism $\ftn{ \alpha_{*} }{ K_{*} ( C^{*} (E) ) }{ K_{*} ( A ) }$ with the property  that $( K_{*} ( \psi ), \alpha_{*} , K_{*} ( \phi ) )$ is an isomorphism from $K_{\mathrm{six}} ( C^{*} (E) ; I_{\{v\} } )$ to $K_{\mathrm{six}} ( A; I)$. 
\end{itemize}

Note that $I_{\{v\}} \otimes \cK$ is an essential ideal of $C^{*} (E) \otimes \cK$ and there exist an isomorphism $\ftn{ \overline{\alpha}_{*} }{ K_{*} ( C^{*} (E) \otimes \cK ) }{ K_{*} ( A \otimes \cK ) }$ such that $( K_{*} ( \psi \otimes \id_{\cK} ), \overline{\alpha}_{*} , K_{*} ( \phi \otimes \id_{ \cK} ) )$ is an isomorphism from $K_{\mathrm{six}} ( C^{*} (E)  \otimes \cK ; I_{\{v\} \otimes \cK } )$ to $K_{\mathrm{six}} ( A \otimes \cK ; I \otimes \cK)$.  Also, note that $I \cong I_{\{v\}} = \cK$ or $I \cong I_{\{v\}} \cong \mathcal{O}_{\infty } \otimes \cK$ .  By Lemma~\ref{l: isomorphism ksix}, $A \otimes \cK \cong C^{*} ( E ) \otimes \cK$.  Therefore,  $A$ is isomorphic to a graph algebra by Proposition~\ref{p:  corners of graph algebras}.  
\end{proof}

\begin{lemma}\label{lem:AFidealCKquotient}
For each $m \in \Nz$, for each $n \geq 0$, the smallest nonzero ideal $I$ of $E_{m}^{\pm1} \otimes \bigotimes_{k=1}^{n} E_{2}^{r_{k}} $ is isomorphic to $\cK$ and $( E_{m}^{\pm1} \otimes \bigotimes_{k=1}^{n} E_{2}^{r_{k}} ) / I$ is isomorphic to a Cuntz-Krieger algebra with vanishing $K_{1}$-group.

Consequently, $E_{m}^{\pm1} \otimes \bigotimes_{k=1}^{n} E_{2}^{r_{k}}$ and $E_{m}^{\pm1} \otimes \bigotimes_{k=1}^{n} E_{2}^{r_{k}} \otimes \mathcal{O}_{\infty}$ are isomorphic to graph algebras.
\end{lemma}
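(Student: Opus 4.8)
The plan is to pin down the smallest non-zero ideal of $A:=E_m^{\pm1}\otimes\bigotimes_{k=1}^nE_2^{r_k}$ and the quotient by it, and then quote Lemma~\ref{lem:AFOinftyideal}. Write $B_0=E_m^{\pm1}$ and $B_k=E_2^{r_k}$; by Theorem~\ref{basicunique} each $B_i$ is a unital extension $0\to\cK\to B_i\to\cO_{d_i}\to0$ with $\cO_{d_0}=\cO_m$ and $\cO_{d_k}=\cO_2$ unital UCT Kirchberg algebras, so $\cK$ is the smallest non-zero ideal of $B_i$ and is essential; and, using Theorem~\ref{list} and Section~\ref{coirrco}, we realise $A\cong C^*(A_\Gamma^+)$ where $\Gamma$ has finite co-irreducible components $\Gamma_0,\dots,\Gamma_n$, each with more than one vertex, with $\chi(\Gamma_0)=\pm(m-1)$ and $\chi(\Gamma_k)=\pm1$. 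First I would use $\Prim(A)\cong\prod_{i=0}^n\Prim(B_i)$ from Section~\ref{prim}: this is a finite product of two-point spaces, so $\Prim(A)$ has a dense point, whence $A$ is prime and the smallest non-zero open subset of $\Prim(A)$ is the singleton consisting of that point; the corresponding ideal is the tensor product ideal $\cK\otimes\dots\otimes\cK\cong\cK$. Thus $I\cong\cK$, and, $A$ being prime, $I$ is essential — this is the first half of the statement.

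Next I would analyse $Q:=A/I$. It is unital, separable, nuclear, and has finitely many ideals; and since $\cK,\cO_{d_i}$, hence each $B_i$, lie in the bootstrap class, which is closed under tensor products, extensions and quotients, $Q$ satisfies the UCT. For the $K$-theory, the six term sequence of $0\to I\to A\to Q\to0$ together with $K_*(A)=K_*(\Cz)$ and the fact that $K_0(I)\to K_0(A)$ is multiplication by $\prod_i\chi(\Gamma_i)=\pm(m-1)$ (Section~\ref{K} and the proof of Theorem~\ref{general-theo}) gives $K_1(Q)=0$ and $K_0(Q)\cong\Zz/(m-1)\Zz$; one checks similarly that $K_1$ vanishes on every subquotient of $Q$, again because $K_*(B_i)=K_*(\Cz)$. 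To see that $Q$ is purely infinite, observe that every simple subquotient of $A$ other than $I$ itself is, up to isomorphism, of the form $\cK\otimes\bigotimes_{i\in S}\cO_{d_i}$ (or $\bigotimes_{i\in S}\cO_{d_i}$) for some non-empty $S\subseteq\{0,\dots,n\}$, hence purely infinite and simple; choosing a maximal chain of ideals $0=L_0\subsetneq\dots\subsetneq L_N=Q$, each $L_j/L_{j-1}$ is such a simple subquotient, so by repeated use of the permanence of pure infiniteness under extensions (\cite{ekmr:incacao}) the algebra $Q=L_N$ is purely infinite. Thus $Q$ is a unital, separable, nuclear, purely infinite C*-algebra in the bootstrap class with finitely many ideals and with vanishing $K_1$ throughout, so by the range-of-invariant and classification results for Cuntz--Krieger algebras (\cite[Theorem~4.4]{abk: range} and \cite[Proposition~8.3]{gr:classification of ck-algebras}), together with the fact that a unital full corner of the stabilisation of a Cuntz--Krieger algebra is again a Cuntz--Krieger algebra (as in Proposition~\ref{p:  corners of graph algebras} and \cite{seaer: corners of CK algebras}), $Q$ is isomorphic to a purely infinite Cuntz--Krieger algebra, which has $K_1=0$ by the computation above. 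This establishes the displayed claim.

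For the ``consequently'' part I would apply Lemma~\ref{lem:AFOinftyideal} twice. Directly for $A$: it is unital, separable, nuclear, $I\cong\cK$ is an essential ideal, and $A/I=Q$ is a purely infinite Cuntz--Krieger algebra with $K_1(Q)=0$, so $A$ is a graph algebra. For $A\otimes\cO_\infty$: its smallest non-zero ideal is $I\otimes\cO_\infty\cong\cK\otimes\cO_\infty\cong\cO_\infty\otimes\cK$, again essential (as $A\otimes\cO_\infty$ is again prime), with quotient $Q\otimes\cO_\infty$; since $Q$ is a quotient of $C^*(A_\Gamma^+)$ it has the ideal property (\cite[Theorem~8.3]{jcml:bqitaag}), and being also purely infinite it is strongly purely infinite (\cite[Proposition~2.14]{cpmr:picrrz}), hence $\cO_\infty$-absorbing (\cite[Theorem~9.1]{ekmr:incacao}), so $Q\otimes\cO_\infty\cong Q$ is once more a purely infinite Cuntz--Krieger algebra with vanishing $K_1$. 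Lemma~\ref{lem:AFOinftyideal}, applied with the ideal isomorphic to $\cO_\infty\otimes\cK$, then shows $A\otimes\cO_\infty$ is a graph algebra.

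I expect the main obstacle to be the middle paragraph. First, one must argue that $Q$ is genuinely purely infinite even though $A$ itself is not (it contains $\cK$); the composition-series argument handles this, but it depends on correctly listing all simple subquotients of $A$. More substantially, one must know that the filtered $K$-theory of $Q$ actually lies in the range of finite graphs with no sinks satisfying Condition~(K), so that $Q$ is literally a Cuntz--Krieger algebra rather than merely an algebra classifiable as one; this is exactly where the hypothesis that the small tensor factors are the algebras $E_2^{r_k}$ — so that the simple quotients are copies of $\cO_2$ and $K_1$ vanishes on every subquotient — enters, removing the $\mathrm{Ext}$-obstructions that Lemma~\ref{l: isomorphism ksix} is built to deal with in the general situation.
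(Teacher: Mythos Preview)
Your overall architecture is right: identify $I=\bigotimes_i\cK\cong\cK$, show $Q=A/I$ is a purely infinite Cuntz--Krieger algebra with $K_1=0$, then invoke Lemma~\ref{lem:AFOinftyideal}. The identification of $I$, the six-term computation of $K_*(Q)$, the composition-series argument for pure infiniteness of $Q$, and the handling of the $\cO_\infty$ case are all fine. The gap is the sentence where you conclude that $Q$ is a Cuntz--Krieger algebra ``by the range-of-invariant and classification results for Cuntz--Krieger algebras (\cite[Theorem~4.4]{abk: range} and \cite[Proposition~8.3]{gr:classification of ck-algebras})''. Those results are \emph{about} Cuntz--Krieger algebras: Restorff's theorem classifies purely infinite Cuntz--Krieger algebras by filtered $K$-theory, and the ABK range result describes which invariants arise from them. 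Neither tells you that an arbitrary separable, nuclear, purely infinite, UCT C*-algebra with finitely many ideals and the right $K$-theory is isomorphic to a Cuntz--Krieger algebra---for that you would need a classification theorem for a class containing $Q$, and filtered $K$-theory is not known to be complete on such a class in general. (In the proof of Lemma~\ref{lem:AFOinftyideal} those same citations are used only because the quotient is \emph{assumed} to be Cuntz--Krieger.)

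The paper closes this gap by a structural decomposition you do not attempt. It introduces the intermediate ideal $J=\cK\otimes\bigotimes_{k=1}^nE_2^{r_k}$ and proves: (a) $J/I$ is $\cO_2$-absorbing, by an induction on $n$ using extension-permanence of $\cO_2$-absorption---this is where the hypothesis that the small factors are $E_2^{r_k}$ is actually used, since $E_2^{\pm1}/\cK\cong\cO_2$; (b) $A/J\cong\cO_m\otimes\bigotimes_kE_2^{r_k}$ has every simple subquotient stably $\cO_m$ or $\cO_2$ and all boundary maps vanish, so by \cite[Corollary~8.2]{rb:kxrrzic} it is a Cuntz--Krieger algebra with vanishing boundary maps; and (c) since $K_*(J/I)=0$, the extension $0\to J/I\to A/I\to A/J\to 0$ also has vanishing boundary maps. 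Then \cite[Propositions~3.7 and~3.10, Corollary~8.4]{rb:kxrrzic} glue (a)--(c) to conclude that $A/I$ is a purely infinite Cuntz--Krieger algebra. The vanishing of $K_1(A/I)$ is read off from this decomposition rather than asserted \emph{a priori}. In short, your ``$K_1$ vanishes on every subquotient, hence $Q$ is Cuntz--Krieger'' shortcut needs the Bentmann results \cite{rb:kxrrzic} as the bridge, and verifying their hypotheses (real rank zero, vanishing boundary maps, the rank condition on simple subquotients) is essentially what the paper's decomposition via $J$ accomplishes.
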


\begin{proof}
Note that for each $m \in \Nz$, by \cite[Theorem~7.2]{setkmtjw:rking}, $E_{m}^{+1}$, $E_{m}^{-1}$, $E_{m}^{+1} \otimes \mathcal{O}_{\infty}$, and $E_{m}^{-1} \otimes \mathcal{O}_{\infty}$ are graph algebras with $E_{m}^{\pm 1 } / \cK$ and $(E_{m}^{\pm 1 } \otimes \mathcal{O}_{\infty}) / (\cK \otimes \mathcal{O}_{\infty} ) \cong ( E_{m}^{\pm 1 } / \cK )\otimes \mathcal{O}_{\infty}$ are isomorphic to purely infinite Cuntz-Krieger algebras.  Therefore, we may assume that $n \geq 1$.  

For notational convenience, set $A = E_{m}^{\pm1} \otimes \bigotimes_{k=1}^{n} E_{2}^{r_{k}}$.  Note that $I = \bigotimes_{ k = 1}^{n+1} \cK$.  Let $J = \cK \otimes \bigotimes_{ k = 1}^{n} E_{2}^{ r_{k}}$.  Then $J$ is a primitive ideal and $A / J \cong \mathcal{O}_{m} \otimes \bigotimes_{ k = 1}^{n} E_{2}^{ r_{k}}$.  We will show that $J / I$ is stably isomorphic to an $\mathcal{O}_{2}$-absorbing Cuntz-Krieger algebra, $A / J$ is isomorphic to a Cuntz-Krieger algebra with vanishing boundary maps, and the boundary maps in $K$-theory induced by the extension $0 \to J / I \to A / I \to A / J \to 0$ are zero. 

We will first prove that $J / I$ is $\mathcal{O}_{2}$-absorbing.  Note that it is enough to show that $\left( \bigotimes_{k=1}^{n} E_{2}^{r_{k}} \right) / \left( \bigotimes_{ k = 1}^{n} \cK \right)$ is $\mathcal{O}_{2}$-absorbing since $J / I \cong \cK \otimes \left( \bigotimes_{k=1}^{n} E_{2}^{r_{k}} \right) / \left( \bigotimes_{ k = 1}^{n} \cK \right)$.  Since $E_{2}^{\pm} / \cK \cong \mathcal{O}_{2}$ which is $\mathcal{O}_{2}$-absorbing by \cite[Theorem~3.8]{ekncp:eeccao}, we have that $\left( \bigotimes_{k=1}^{n} E_{2}^{r_{k}} \right) / \left( \bigotimes_{ k = 1}^{n} \cK \right)$ is $\mathcal{O}_{2}$-absorbing for $n = 1$.  Suppose $\left( \bigotimes_{k=1}^{m} E_{2}^{r_{k}} \right) / \left( \bigotimes_{ k = 1}^{m} \cK \right)$ is $\mathcal{O}_{2}$-absorbing for $1 \leq m < n$.  Consider the extension
\begin{align*}
\scalebox{.8}{$0 \to \left( E_{2}^{r_{1}} \otimes  \bigotimes_{k=2}^{n} \cK \right) / \left( \bigotimes_{ k = 1}^{n} \cK \right) \to \left( \bigotimes_{k=1}^{n} E_{2}^{r_{k}} \right) / \left( \bigotimes_{ k = 1}^{n} \cK \right) \to \left( \bigotimes_{k=1}^{n} E_{2}^{r_{k}} \right) / \left(E_{2}^{r_{1}}  \otimes \bigotimes_{ k = 2}^{n} \cK \right) \to 0$}.
\end{align*}
Now,  $( E_{2}^{r_{1}} \otimes  \bigotimes_{k=2}^{n} \cK ) / ( \otimes_{ k = 1}^{n} \cK ) \cong ( E_{2}^{r_{1}} / \cK ) \otimes \bigotimes_{k=2}^{n} \cK \cong \mathcal{O}_{2} \otimes \bigotimes_{k=2}^{n} \cK$ which is $\mathcal{O}_{2}$-absorbing by \cite[Theorem~3.8]{ekncp:eeccao}.  Since $(\bigotimes_{k=1}^{n} E_{2}^{r_{k}} ) / (E_{2}^{r_{1}}  \otimes \bigotimes_{ k = 2}^{n} \cK ) \cong E_{2}^{r_{1}} \otimes \left( \left(\bigotimes_{k=2}^{n} E_{2}^{r_{k}} \right) / \left( \bigotimes_{k=2}^{n} \cK \right) \right)$ and because of the inductive hypothesis, we have that $(\otimes_{k=1}^{n} E_{2}^{r_{k}} ) / (E_{2}^{r_{1}}  \otimes \bigotimes_{ k = 2}^{n} \cK )$ is $\mathcal{O}_{2}$-absorbing. Hence, by \cite[Theorem~3.8]{ekncp:eeccao} and \cite[Corollary~4.3]{atww:ssc}, $\left( \bigotimes_{k=1}^{n} E_{2}^{r_{k}} \right) / \left( \bigotimes_{ k = 1}^{n} \cK \right)$ is $\mathcal{O}_{2}$-absorbing.  This proves our claim.  

Since $J / I$ is $\mathcal{O}_{2}$-absorbing and $J/ I$ has finitely many ideals, by \cite{ek:nkmkna}, $J / I$ is stably isomorphic to a Cuntz-Krieger algebra with vanishing boundary maps.  This is because for any finite $T_{0}$-space $X$, there exists an $\mathcal{O}_{2}$-absorbing Cuntz-Krieger algebra with primitive ideal space $X$.  We also note that the boundary maps in $K$-theory induced by the extension $0 \to J / I \to A / I \to A / J \to 0$ are zero since $K_{*} (J/I) = \gekl{0}$.

We now show that $A / J$ is isomorphic to a Cuntz-Krieger algebra with vanishing boundary maps.  Recall that $A / J \cong \mathcal{O}_{m} \otimes \bigotimes_{ k = 1}^{n} E_{2}^{ r_{k}}$.  Hence, every simple sub-quotient of $A / J$ is isomorphic to $\mathcal{O}_{m} \otimes (\mathcal{I}_{2} / \mathcal{I}_{1} )$ where $\mathcal{I}_{1}, \mathcal{I}_{2}$ are ideals of $\bigotimes_{ k = 1}^{n} E_{2}^{ r_{k}}$ with $\mathcal{I}_{1} \subseteq \mathcal{I}_{2}$ and $\mathcal{I}_{2} / \mathcal{I}_{1}$ simple.  Note that if $\mathcal{I}_{1}, \mathcal{I}_{2}$ are ideals of $\bigotimes_{ k = 1}^{n} E_{2}^{ r_{k}}$ with $\mathcal{I}_{1} \subseteq \mathcal{I}_{2}$ and $\mathcal{I}_{2} / \mathcal{I}_{1}$ simple, then $\mathcal{I}_{2} / \mathcal{I}_{1} \cong \bigotimes_{k = 1}^{n} B_{k}$ where $B_{k}$ is a simple sub-quotient of $E_{2}^{r_{k}}$.  Hence, every simple sub-quotient of $\bigotimes_{ k = 1}^{n} E_{2}^{ r_{k}}$ is either isomorphic to $\bigotimes_{k =1}^{n} \cK$ or is $\mathcal{O}_{2}$-absorbing.  Hence, every simple sub-quotient of $\mathcal{O}_{m} \otimes \bigotimes_{ k = 1}^{n} E_{2}^{ r_{k}}$ is either stably isomorphic to $\mathcal{O}_{m}$ or $\mathcal{O}_{2}$.  So every simple sub-quotient of $A / J$ is stably isomorphic to a Cuntz-Krieger algebra.  Consider the extension $e : 0 \to \mathcal{O}_{m } \otimes \mathcal{I}_{1} \to \mathcal{O}_{m} \otimes \mathcal{I}_{2} \to \mathcal{O}_{m} \otimes ( \mathcal{I}_{2} / \mathcal{I}_{1} ) \to 0$  with $\mathcal{I}_{2} / \mathcal{I}_{1}$ simple.  If $\mathcal{I}_{1} = \gekl{0}$ and $\mathcal{I}_{2} = \bigotimes_{k =1}^{n} \cK$, then $ \mathcal{O}_{m} \otimes \mathcal{I}_{1} = \gekl{0}$ which implies that $e$ has vanishing boundary maps.  If $\mathcal{I}_{2} / \mathcal{I}_{1}$ is $\mathcal{O}_{2}$-absorbing, then $K_{*} ( \mathcal{O}_{m} \otimes ( \mathcal{I}_{2} / \mathcal{I}_{1} ) ) = \gekl{0}$ which also implies that $e$ has vanishing boundary maps.  By \cite[Corollary~3.6]{rb:kxrrzic}, we have that $A / J \cong \mathcal{O}_{m} \otimes \bigotimes_{ k = 1}^{n} E_{2}^{ r_{k}}$ has vanishing boundary maps.  Therefore, by \cite[Corollary~8.2]{rb:kxrrzic}, $A / J$ is isomorphic to a Cuntz-Krieger algebra with vanishing boundary maps.  This finishes the proof of the above claim.

The above claim shows that all the assumptions in \cite[Proposition~3.7, Proposition~3.10, and  Corollary~8.4]{rb:kxrrzic} are satisfied.  Thus, $A /I$ is isomorphic to a purely infinite Cuntz-Krieger algebra.  

We now show that $K_1(( E_{m}^{\pm1} \otimes \bigotimes_{k=1}^{n} E_{2}^{r_{k}} ) / I) \cong \gekl{0}$.  Since $0 \to J / I \to A / I \to A / J \to 0$ has vanishing boundary maps and $J / I$ is $\mathcal{O}_{2}$-absorbing, we have that the surjective map $A / I \to A / J$ induces an injective map $K_{1} ( A / I ) \to K_{1} ( A / J )$.  Since every simple sub-quotient of $A / J$ is stably isomorphic to $\mathcal{O}_{m}$ or $\mathcal{O}_{2}$ and since $A / J$ has finitely many ideals, one can show that $K_{1} ( A / J ) = \{0\}$.  Therefore, $K_{1} ( A/ I ) = \{ 0 \}$. 

Lemma~\ref{lem:AFOinftyideal} implies that $E_{m}^{\pm1} \otimes \bigotimes_{k=1}^{n} E_{m_{k}}^{r_{k}}$ and $E_{m}^{\pm1} \otimes \bigotimes_{k=1}^{n} E_{m_{k}}^{r_{k}} \otimes \mathcal{O}_{\infty}$ are isomorphic to graph algebras.
\end{proof}

\begin{lemma}\label{lem:MEgraph}
Let $m_{1}, m_{2}, \dots m_{n} \in \Nz$.  Then
\begin{itemize}
\item[(1)]  $\bigotimes_{k=1}^{n} E_{ m_{k} }^{ \pm 1}$ is stably isomorphic to unital graph algebra if and only if whenever there exists an $i$ such that $m_{i} \in \{ 1 \} \sqcup \Zz_{ \geq 3}$, we have that $m_{j} = 2$ for all $j \neq i$.

\item[(2)] $\bigotimes_{k=1}^{n} E_{ m_{k} }^{ \pm 1} \otimes \mathcal{O}_{\infty}$ is stably isomorphic to unital graph algebra if and only if whenever there exists an $i$ such that $m_{i} \in \{ 1 \} \sqcup \Zz_{ \geq 3}$, we have that $m_{j} = 2$ for all $j \neq i$.
\end{itemize}
\end{lemma}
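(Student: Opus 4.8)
\emph{Proof strategy.}
The plan is to prove the two implications separately, deducing the ``if'' direction from Lemma~\ref{lem:AFidealCKquotient} and establishing the ``only if'' direction by producing, for the remaining index patterns, a $K$-theoretic obstruction to being (stably isomorphic to) a unital graph algebra.

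For the implication ``$\Leftarrow$'', observe that the stated condition on $(m_1,\dots,m_n)$ is equivalent to saying that at most one index $m_k$ differs from $2$. In that case $\bigotimes_{k=1}^{n} E_{m_k}^{\pm 1}$ can be rewritten as $E_{m}^{\pm 1}\otimes\bigotimes_{k=1}^{n-1} E_2^{r_k}$, where $m=m_i$ is the exceptional index if it exists and $m=2$ otherwise (and the degenerate $n=0$ case gives $\Cz$, respectively $\mathcal{O}_{\infty}$). By Lemma~\ref{lem:AFidealCKquotient}, both this algebra and its tensor product with $\mathcal{O}_{\infty}$ are isomorphic to graph algebras; since they are unital, the graph in question has finitely many vertices, so these are \emph{unital} graph algebras. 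In particular they are stably isomorphic to unital graph algebras, as required for (1) and (2).

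For ``$\Rightarrow$'', suppose there are two distinct indices $i\neq j$ with $m_i,m_j\in\{1\}\sqcup\Zz_{\geq 3}$, and set $A=\bigotimes_{k=1}^{n}E_{m_k}^{\pm 1}$. First I would reduce to $n=2$: the tensor product $E_{m_i}^{\pm 1}\otimes E_{m_j}^{\pm 1}$ appears, up to stable isomorphism, as an ideal of $A$ (take $\bigotimes_k I_k$ with $I_k=E_{m_k}^{\pm 1}$ for $k\in\{i,j\}$ and $I_k=\cK$ otherwise), and ideals and quotients of a C*-algebra stably isomorphic to a graph algebra are again stably isomorphic to graph algebras; being unital, $E_{m_i}^{\pm 1}\otimes E_{m_j}^{\pm 1}$ is then stably isomorphic to a \emph{unital} graph algebra by Proposition~\ref{p:  corners of graph algebras}. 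So it suffices to show $E_p^{\pm 1}\otimes E_q^{\pm 1}$ is never stably isomorphic to a unital graph algebra when $p,q\in\{1\}\sqcup\Zz_{\geq 3}$. The primary obstruction comes from the maximal simple quotient. When $p,q\geq 2$ this quotient is $\mathcal{O}_p\otimes\mathcal{O}_q$, whose $K$-theory the Künneth theorem computes to contain the torsion group $\Zz/\gcd(p-1,q-1)$ in degree one; whenever $\gcd(p-1,q-1)\geq 2$ this $K_1$-group is nonzero torsion, which cannot occur for a subquotient of a graph algebra, since $K_1$ of a graph C*-algebra is the kernel of an integer matrix and hence free abelian (and subquotients of graph algebras are again, up to stable isomorphism, graph algebras). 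The analogous computation disposes of all cases in which one of the factors equals $1$ and the other lies in $\Zz_{\geq 3}$, where $\mathcal{O}_1\otimes\mathcal{O}_q$ (with $\mathcal{O}_1$ the Kirchberg algebra of Definition~\ref{first}) again has torsion $K_1$.

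The hard part will be the remaining cases, in which $\gcd(p-1,q-1)=1$ (and the case $p=q=1$): here all simple subquotients have free $K$-theory, so the naive $K_1$-obstruction is unavailable. For these one must exploit the finer information recorded by $\mathfrak{F}(E_p^{\pm 1}\otimes E_q^{\pm 1})$ --- the primitive ideal space is the diamond, and one must verify that the resulting filtered $K$-theory, together with the order on the $K_0$-groups and the class of the unit, cannot be realized by any unital graph algebra, using the structural restrictions on graph algebras established in \cite{rb:kxrrzic} and \cite{setkmtjw:rking}. Pinning down precisely which such data are graph-realizable over the diamond, and carefully distinguishing ``stably isomorphic to a graph algebra'' from ``stably isomorphic to a \emph{unital} graph algebra'', is the delicate point of the argument.
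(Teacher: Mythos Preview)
Your ``if'' direction via Lemma~\ref{lem:AFidealCKquotient} and your reduction of the ``only if'' direction to the two-factor case match the paper's argument. One small point: invoking Proposition~\ref{p:  corners of graph algebras} for the reduction is not quite right, since that proposition has very specific hypotheses on the graph. The paper instead argues that since $\bigotimes_k E_{m_k}^{\pm 1}$ has finitely many ideals, any graph algebra in its stable isomorphism class also has finitely many ideals, and then \emph{every} sub-quotient is stably isomorphic to a unital graph algebra with finitely many ideals (all ideals being gauge-invariant in this situation). Your $K_1$-torsion obstruction from $\mathcal{O}_p\otimes\mathcal{O}_q$ is exactly what the paper uses to force $\gcd(p-1,q-1)=1$ when $p,q\geq 3$ and to rule out the mixed case $p=1$, $q\geq 3$.

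The genuine gap is the ``hard'' case, where you only gesture at filtered $K$-theory over the diamond and range-of-invariant results. The paper's argument is considerably more elementary and you are missing the key idea: look at the \emph{intermediate} quotient $Q=(E_p^{+1}\otimes E_q^{+1})/(\cK\otimes\cK)$ rather than only at the simple subquotients. From the six-term sequence for $0\to\cK\otimes\cK\to E_p^{+1}\otimes E_q^{+1}\to Q\to 0$ one reads off immediately that $K_0(Q)$ is a quotient of $\Zz$ and $K_1(Q)$ is a subgroup of $\Zz$, so both are \emph{cyclic}. On the other hand, $Q$ is itself stably isomorphic to a unital graph algebra with finitely many ideals, hence has real rank zero, so in the six-term sequence for
\[
0 \;\longrightarrow\; (\cK\otimes\cO_q)\oplus(\cO_p\otimes\cK) \;\longrightarrow\; Q \;\longrightarrow\; \cO_p\otimes\cO_q \;\longrightarrow\; 0
\]
the exponential map $K_0(\cO_p\otimes\cO_q)\to K_1(\cO_p)\oplus K_1(\cO_q)$ vanishes. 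For $p=q=1$ this yields an injection $K_1(\cO_1)\oplus K_1(\cO_1)\cong\Zz^2\hookrightarrow K_1(Q)$, contradicting cyclicity. For $p,q\geq 3$ the paper reads off $K_0(Q)\cong\Zz_{p-1}\oplus\Zz_{q-1}$ from the same sequence and contrasts this with the cyclicity of $K_0(Q)$. So the argument stays entirely at the level of two ordinary six-term sequences and the real-rank-zero fact; the heavy machinery you anticipate is not needed.
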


\begin{proof}
We prove (1).  (2) is proved in a similar way.

Suppose whenever there exists an $i$ such that $m_{i} \in  \{ 1 \} \sqcup \Zz_{ \geq 3}$, we have that $m_{j} = 2$ for all $j \neq i$.  By Lemma~\ref{lem:AFidealCKquotient}, $\bigotimes_{k=1}^{n} E_{ m_{k} }^{ \pm 1}$ is isomorphic to a graph algebra.  So it is also stably isomorphic to a unital graph algebra.  

Suppose $\bigotimes_{k=1}^{n} E_{ m_{k} }^{ \pm 1}$ is stably isomorphic to a graph algebra.  Note that $E_{m}^{\pm 1} \otimes \cK \cong E_{m}^{+1} \otimes \cK$ for any $m$.  Therefore, it is enough to treat the case $\bigotimes_{k=1}^{n} E_{ m_{k} }^{ + 1}$.  Note that $\bigotimes_{k=1}^{n} E_{ m_{k} }^{ +1}$ has finitely many ideals.  Since $\bigotimes_{k=1}^{n} E_{ m_{k} }^{ +1}$ is stably isomorphic to a unital graph algebra $C^{*} (E)$, we have that $C^{*} (E)$ has finitely many ideals.  Therefore, every sub-quotient of $C^{*} (E)$ is stably isomorphic to a unital graph algebra with finitely many ideals.  Consequently, every sub-quotient of $\bigotimes_{k=1}^{n} E_{ m_{k} }^{ + 1}$ is stably isomorphic to a unital graph algebra with finitely many ideals.  

Suppose there exists $i$ and $j$ such that $m_{i} , m_{j} \in \{ 1 \} \sqcup \Zz_{ \geq 3}$.  Let $I = \bigotimes_{k = 1}^{n} I_{k}$ be the ideal of $\bigotimes_{k=1}^{n} E_{ m_{k} }^{ + 1}$ where $I_{k} = \cK$ if $k \notin \{ i , j \}$, $I_{ i } = E_{ m_{i}}^{+1}$, and $I_{ j } = E_{ m_{j} }^{+1}$.  From the above observation we must have that every sub-quotient of $I$ is stably isomorphic to a unital graph algebra with finitely many ideals.  Note that $I$ is stably isomorphic to $E_{m_{i}}^{+1} \otimes E_{m_{j}}^{+1}$ and $E_{m_{i}}^{+1} \otimes E_{m_{j}}^{+1}$ has a quotient isomorphic to $\mathcal{O}_{m_{i} } \otimes \mathcal{O}_{m_{j}}$.  Therefore, $\mathcal{O}_{m_{i} } \otimes \mathcal{O}_{m_{j}}$ is stably isomorphic to a graph algebra.  

Let $\cK \otimes \cK$ be the smallest non-zero ideal of $E_{m_{i}}^{+1} \otimes E_{m_{j}}^{+1}$.  By the K\"{u}nneth formula, $K_{0} ( E_{m_{i}}^{+1} \otimes E_{m_{j}}^{+1} ) \cong \Zz$ and $K_{1} ( E_{m_{i}}^{+1} \otimes E_{m_{j}}^{+1} ) = \gekl{0}$, and hence the extension $0 \to \cK \otimes \cK \to E_{m_{i}}^{+1} \otimes E_{m_{j}}^{+1} \to (E_{m_{i}}^{+1} \otimes E_{m_{j}}^{+1}) / (\cK \otimes \cK) \to 0$ induces a six-term exact sequence in $K$-theory of the form 
\begin{align*}
\xymatrix{ \Zz \ar[r] & \Zz \ar[r] & K_{0}( (E_{m_{i}}^{+1} \otimes E_{m_{j}}^{+1}) / (\cK \otimes \cK) ) \ar[d] & \\
  K_{1}((E_{m_{i}}^{+1} \otimes E_{m_{j}}^{+1}) / (\cK \otimes \cK) ) \ar[u] & 0 \ar[l] & 0. \ar[l] }
\end{align*}
In particular, $K_{0}((E_{m_{i}}^{+1} \otimes E_{m_{j}}^{+1}) / (\cK \otimes \cK) )$ and $K_{1}((E_{m_{i}}^{+1} \otimes E_{m_{j}}^{+1}) / (\cK \otimes \cK) )$ are cyclic groups.  

Since $(E_{m_{i}}^{+1} \otimes E_{m_{j}}^{+1}) / (\cK \otimes \cK)$ is stably isomorphic to a graph algebra with finitely many ideals, $(E_{m_{i}}^{+1} \otimes E_{m_{j}}^{+1}) / (\cK \otimes \cK)$ has real rank zero.  Therefore, the quotient of $(E_{m_{i}}^{+1} \otimes E_{m_{j}}^{+1}) / (\cK \otimes \cK)$ by the ideal $(\cK \otimes E_{m_{j}}^{+1} + E_{m_{i}}^{+1} \otimes \cK) / ( \cK \otimes \cK )$ induces the following six-term exact sequence 
\begin{align}\label{eq:MEgraph}
\scalebox{.9}{$\vcenter{\xymatrix{ K_{0} ( \mathcal{O}_{m_{i}} ) \oplus K_{0} ( \mathcal{O}_{m_{j}} ) \ar[r] & K_{0}( (E_{m_{i}}^{+1} \otimes E_{m_{j}}^{+1}) / (\cK \otimes \cK) ) \ar[r] & K_{0} ( \mathcal{O}_{m_{i} } \otimes \mathcal{O}_{m_{j}} ) \ar[d]^{0} \\
K_{1} ( \mathcal{O}_{m_{i} } \otimes \mathcal{O}_{m_{j}} ) \ar[u] & K_{1}( (E_{m_{i}}^{+1} \otimes E_{m_{j}}^{+1}) / (\cK \otimes \cK) ) \ar[l] & K_{1} ( \mathcal{O}_{m_{i}} ) \oplus K_{1} ( \mathcal{O}_{m_{j}} ). \ar[l] }}$}
\end{align}
Using the K\"{u}nneth formula, we get 
\begin{align*}
K_{0} ( \mathcal{O}_{m_{i} } \otimes \mathcal{O}_{m_{j}} ) = K_{1} ( \mathcal{O}_{m_{i} } \otimes \mathcal{O}_{m_{j}} ) &= 
\begin{cases}
\Zz_{ \gcd ( m_{i} -1 , m_{j} - 1) } &\text{if $m_{i} , m_{j} \geq 3$} \\
K_{1} ( \mathcal{O}_{m_{i}} ) \oplus K_{0} ( \mathcal{O}_{m_{i}} ) &\text{if $m_{j} =1$} \\
K_{1} ( \mathcal{O}_{m_{j}} ) \oplus K_{0} ( \mathcal{O}_{m_{j}} ) &\text{if $m_{i} =1$}.
\end{cases}
\end{align*}
Since $\mathcal{O}_{m_{i}} \otimes \mathcal{O}_{m_{j}}$ is stably isomorphic to a unital graph algebra, $\gcd ( m_{i} -1 , m_{j} - 1) = 1$ if $m_{i} , m_{j} \geq 3$ and $m_{i} =1$ if and only if $m_{j} =1$.  

Suppose $m_{i} , m_{j} \geq 3$. Exactness of Diagram~(\ref{eq:MEgraph}) implies that $K_{0}( (E_{m_{i}}^{+1} \otimes E_{m_{j}}^{+1}) / (\cK \otimes \cK) ) \cong K_{0} ( \mathcal{O}_{m_{i}} ) \oplus K_{0} ( \mathcal{O}_{m_{j}} ) \cong \Zz_{ m_{i}-1} \oplus \Zz_{ m_{j} - 1}$ which contradicts the fact that $K_{0}(E_{m_{i}}^{+1} \otimes E_{m_{j}}^{+1}) / (\cK \otimes \cK) )$ is a cyclic group.  

Suppose $m_{i}  =1$.  Then $m_{j} = 1$.  Then by the exactness of Diagram~(\ref{eq:MEgraph}), $K_{1}( (E_{m_{i}}^{+1} \otimes E_{m_{j}}^{+1}) / (\cK \otimes \cK) )$ has a sub-group isomorphic to $K_{1} ( \mathcal{O}_{m_{i}} ) \oplus K_{1} ( \mathcal{O}_{m_{j}} ) \cong \Zz \oplus \Zz$.  This cannot happen since $K_{1}( (E_{m_{i}}^{+1} \otimes E_{m_{j}}^{+1}) / (\cK \otimes \cK) )$ is a cyclic group. 
\end{proof}

Let the notation be as in Definition~\ref{not}.

\begin{theorem}\label{isgraph}
Let $\Gamma$ be a countable graph.  Then $C^{*} ( A_{ \Gamma}^{+} )$ is isomorphic to a graph algebra if and only if one of the following holds
\begin{enumerate}
\item $t( \Gamma ) =1$, $o( \Gamma )=0$ and  $N_{k} ( \Gamma ) = 0$ for all $k$
\item $t(\Gamma)=0$, $N_{-1} ( \Gamma )+N_1(\Gamma) < \infty$ and
\[
\sum_{|k|\not=1}N_k(\Gamma)\leq1
\]
\end{enumerate}
\end{theorem}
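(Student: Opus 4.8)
The plan is to run both implications through the structural picture of $C^*(A_\Gamma^+)$ assembled in Sections~\ref{preliminaries}--\ref{coirr}: writing the co-irreducible decomposition of $\Gamma$ and invoking Theorem~\ref{list}, one has $C^*(A_\Gamma^+)\cong\cT^{\otimes t(\Gamma)}\otimes\cO_\infty^{\otimes o(\Gamma)}\otimes\bigotimes_i E_{1+|\chi(\Gamma_i)|}^{\sgn\chi(\Gamma_i)}$, the tensor product running over the components $\Gamma_i$ with $1<|V_i|<\infty$, so that only the numbers $t(\Gamma),o(\Gamma),N_k(\Gamma)$ can matter. I will freely use: that $C^*(A_\Gamma^+)$ is unital, hence if it is a graph algebra $C^*(E)$ then $E^0$ is finite; that $\cO_\infty^{\otimes o}\cong\cO_\infty$ for $1\le o\le\infty$; that $\Prim(C^*(A_\Gamma^+))=\prod_i\Prim(C^*(A_{\Gamma_i}^+))$ by \S\ref{prim}; and the structure theory of graph C*-algebras, in particular that a subquotient of a graph algebra with spectrum homeomorphic to $\Tz$ is isomorphic to some $M_n(C(\Tz))$, that a unital graph algebra with infinitely many ideals must possess such a subquotient (equivalently fails Condition~(K)), and that $C(\Tz^2)$ is not a quotient of any graph C*-algebra.

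For the implication (1) or (2) $\Rightarrow$ graph algebra: in Case~(1), $C^*(A_\Gamma^+)\cong\cT$, and the Toeplitz algebra is the graph algebra of the graph with two vertices, a loop at one of them, and one further edge between them. In Case~(2), $t(\Gamma)=0$ and the conditions $N_{-1}(\Gamma)+N_1(\Gamma)<\infty$ and $\sum_{|k|\ne1}N_k(\Gamma)\le1$ force the tensor product over the finite components to be finite. If that product is empty then $C^*(A_\Gamma^+)$ is $\Cz$ or $\cO_\infty$, both graph algebras; otherwise I group the (at most one) factor $E_m^{\pm1}$ with $m\ne2$ with the finitely many factors $E_2^{\pm1}$ and absorb all copies of $\cO_\infty$ into one, so that $C^*(A_\Gamma^+)$ takes the form $E_m^{\pm1}\otimes\bigotimes_{k=1}^nE_2^{r_k}$ or $E_m^{\pm1}\otimes\bigotimes_{k=1}^nE_2^{r_k}\otimes\cO_\infty$, to which Lemma~\ref{lem:AFidealCKquotient} applies directly.

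For the converse I would assume $C^*(A_\Gamma^+)\cong C^*(E)$ and split on $t(\Gamma)$ and on the finiteness of $\sum_kN_k(\Gamma)$. If $t(\Gamma)=0$ and $\sum_kN_k(\Gamma)<\infty$, then $C^*(A_\Gamma^+)$ is a finite tensor product $\bigotimes_jE_{m_j}^{\pm1}$, possibly tensored with $\cO_\infty$, hence a unital graph algebra with finitely many ideals, in particular stably isomorphic to a unital graph algebra; Lemma~\ref{lem:MEgraph} then yields that at most one $m_j$ is $\ne2$, i.e.\ $\sum_{|k|\ne1}N_k(\Gamma)\le1$, and with $t(\Gamma)=0$ and $N_{-1}(\Gamma)+N_1(\Gamma)\le\sum_kN_k(\Gamma)<\infty$ this is (2). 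If $t(\Gamma)=0$ but $\sum_kN_k(\Gamma)=\infty$, then $\Prim(C^*(A_\Gamma^+))$ is, after deleting the one-point factors coming from $o(\Gamma)$, a countably infinite product of two-point Sierpi\'nski spaces, which is infinite and has no locally closed subspace homeomorphic to $\Tz$; so $C^*(A_\Gamma^+)$ has infinitely many ideals but no $M_n(C(\Tz))$-subquotient, contradicting the structure theory of unital graph algebras, and this case does not occur. Finally, if $t(\Gamma)\ge1$, write $C^*(A_\Gamma^+)\cong\cT^{\otimes t(\Gamma)}\otimes R$ and pick a simple quotient $Q$ of $R$ (so $Q$ is $\Cz$ or purely infinite simple); mapping one Toeplitz factor onto $C(\Tz)$, the remaining ones onto $\Cz$, and $R$ onto $Q$ exhibits $C(\Tz)\otimes Q$ as a quotient of $C^*(A_\Gamma^+)$, hence of $C^*(E)$, with spectrum $\Tz$. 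Such a quotient must be some $M_n(C(\Tz))$, hence type I, hence $Q$ is type I, hence $Q\cong\Cz$; since $\cO_\infty$ and the $E_m^{\pm1}$ have no characters, it follows that $R\cong\Cz$, i.e.\ $o(\Gamma)=0$ and all $N_k(\Gamma)=0$. If moreover $t(\Gamma)\ge2$, then $C^*(A_\Gamma^+)\cong\cT^{\otimes t(\Gamma)}$ has $C(\Tz^2)$ as a quotient, which is impossible for a graph algebra; hence $t(\Gamma)=1$ and we are in Case~(1).

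The main obstacle is the converse, and within it the exclusion of the cases $t(\Gamma)\ge1$ beyond $C^*(A_\Gamma^+)\cong\cT$ and $\sum_kN_k(\Gamma)=\infty$: in those cases one is not handed an algebra with finitely many ideals, so the route used for the main case via Lemma~\ref{lem:MEgraph} (Condition~(K) $\Rightarrow$ subquotients are graph algebras $\Rightarrow$ $K$-theoretic contradiction) is unavailable, and one must appeal to the finer structure of graph C*-algebras --- the identification of their Hausdorff spectral pieces with $M_n(C(\Tz))$'s, the impossibility of $C(\Tz^2)$ (hence of $C(\Tz^t)$ for $t\ge2$) as a quotient, and the interplay between Condition~(K), the ideal lattice, and circles in the primitive ideal space. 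Assembling these and checking that no product of Sierpi\'nski spaces contains a circle is where the care lies; by contrast the case $t(\Gamma)=0$, $\sum_kN_k(\Gamma)<\infty$ and the forward direction are essentially immediate from Lemmas~\ref{lem:AFidealCKquotient} and~\ref{lem:MEgraph}.
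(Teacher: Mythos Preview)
Your forward direction and the main case of the converse ($t(\Gamma)=0$, $\sum_kN_k(\Gamma)<\infty$, reduced to Lemma~\ref{lem:MEgraph}) match the paper exactly. The two obstruction cases are handled by genuinely different arguments, and it is worth comparing them.

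For $t(\Gamma)=0$ with $\sum_kN_k(\Gamma)=\infty$, the paper does \emph{not} analyse the topology of $\prod_nS$; it invokes Lemma~\ref{infirrcomp-pi} to see that $C^*(A_\Gamma^+)$ is $\cO_\infty$-absorbing, and then uses that a unital $\cO_\infty$-absorbing graph algebra has finitely many ideals (since $\cO_\infty$-absorption forbids any $M_n(C(\Tz))$-subquotient, hence forces Condition~(K)). Your route via ``no locally closed copy of $\Tz$ in $\prod_nS$'' is correct but is exactly the step you flag as delicate; the proof goes: if $L\cong\Tz$ were locally closed, compactness gives finitely many basic opens $U_{F_1},\dots,U_{F_k}$ covering $L$ with $U_{F_j}\cap\overline L\subseteq L$; the point $p_{F_1}$ whose $x$-coordinates are exactly $F_1$ lies in $\overline L\cap U_{F_1}\subseteq L$, and its smallest neighbourhood in $L$ is $\bigcap_{n\in F_1}V_n$, contradicting that $\Tz$ has no isolated points. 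So your plan works, but the paper's $\cO_\infty$-absorption shortcut is considerably cleaner and reuses a lemma already established.

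For $t(\Gamma)\ge1$, the paper's device is different in spirit: it repeatedly quotients by ideals \emph{generated by projections}, which in a graph algebra are automatically gauge-invariant, so the quotient is again a graph algebra. One such quotient is either $\cO_\infty$-absorbing (forcing $t(\Gamma)=0$ as above) or equals $\cT^{\otimes t(\Gamma)}$; a further projection-generated quotient of the latter is $C(\Tz^{t(\Gamma)})$, a unital \emph{commutative} graph algebra, hence a finite direct sum of copies of $\Cz$ and $C(\Tz)$, which forces $t(\Gamma)=1$. Your argument instead produces the quotient $C(\Tz)\otimes Q$ and asserts that any quotient of a unital graph algebra with spectrum $\Tz$ is some $M_n(C(\Tz))$. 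That assertion is true, but it rests on the Hong--Szyma\'nski description of $\Prim(C^*(E))$ (the circles of non-gauge-invariant primitive ideals consist of maximal ideals, a closed connected $\Tz$ must coincide with one of them, and the corresponding quotient is $M_n(C(\Tz))$); this is heavier machinery than the paper needs. The paper's ``gauge-invariant ideal'' reduction is the organizing idea you are missing: it turns each obstruction case into a statement about an honest graph algebra, where elementary facts (commutative graph algebras, Condition~(K)) finish the job.
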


\begin{proof}
Suppose there exists an isomorphism $\ftn{ \psi }{ C^{*} ( A_{ \Gamma }^{+} ) }{  C^{*} (E) }$ for some countable directed graph $E$.  Since $C^{*} ( A_{ \Gamma }^{+} )$ is unital, $C^{*} (E)$ is unital.  Let $\Gamma_{i} = ( V_{i} , E_{i} )$ be the co-irreducible components of $\Gamma$.  To prove (1), let $I$ be the ideal of $C^{*} ( A_{ \Gamma }^{+})$ generated by $\{ \bigotimes_{j} J_{ij} \}_{i}$ where $J_{ij} = C^{*} ( A_{ \Gamma_{j} }^{+} )$ if $j \neq i$ and 
\begin{align*}
J_{ii} = 
\begin{cases}
\cK &\text{if $1 < | V_{i} | < \infty$} \\
0 &\text{otherwise}.
\end{cases}
\end{align*}
Then $C^{*} ( A_{ \Gamma }^{+} ) / I \cong \bigotimes_{i} C^{*} ( A_{ \Gamma_{i} }^{+} ) / J_{ii}$ where $C^{*} ( A_{ \Gamma_{i}}^{+} ) / J_{ii}$ is a Kirchberg algebra if $|V_{i}| \geq 2$ and $C^{*} ( A_{ \Gamma_{i}}^{+} ) / J_{ii} \cong \mathcal{T}$ otherwise.  In particular, 
\begin{align*}
\mathrm{Prim} ( C^{*} ( A_{ \Gamma }^{+} ) / I  ) &\cong \begin{cases}
 \prod_{ k = 1}^{ t( \Gamma )  } \mathrm{Prim} ( \mathcal{T} ) &\text{if there exists $i$ with $| V_{i} | = 1$} \\
\{ \bullet \} &\text{otherwise}.
\end{cases}
\end{align*}

Note that $I$ is generated by projections.  Therefore, $\psi ( I )$ is generated by projections and hence is a gauge-invariant ideal of $C^{*} (E)$.  Hence, by \cite[Corollary~3.5 and Theorem~3.6]{tbdpirws:crg}, $C^{*} ( E ) / \psi (I)$ is isomorphic to a graph algebra.  Since $C^{*} ( A_{ \Gamma }^{+} ) / I \cong C^{*} ( E ) / \psi (I)$, we have that $C^{*} ( A_{ \Gamma }^{+} ) / I$ is isomorphic to a unital graph algebra.  Note that $C^{*} (A_{ \Gamma}^{+} ) / I$ is $\mathcal{O}_{\infty}$-absorbing (if there exists $i$ such that $| V_{i} | \geq 2$) or $C^{*} (A_{ \Gamma}^{+} ) / I \cong \bigotimes_{ k = 1}^{ t( \Gamma ) } \mathcal{T}$.  

Suppose $C^{*} (A_{ \Gamma}^{+} ) / I$ is $\mathcal{O}_{\infty}$-absorbing.  Since any unital $\mathcal{O}_{\infty}$-absorbing graph algebra has a finite primitive ideal space, we must have that $t( \Gamma ) = 0$.  Suppose $C^{*} (A_{ \Gamma}^{+} ) / I$ is not $\mathcal{O}_{\infty}$-absorbing.  Then $C^{*} (A_{ \Gamma}^{+} ) / I \cong \bigotimes_{ k = 1}^{ t( \Gamma ) } \mathcal{T}$.  Let $J$ be the ideal generated by $\{ \bigotimes_{j} J_{ij} \}_{i}$ where $J_{ij} = \mathcal{T}$ if $j \neq i$ and $J_{ii} = \cK$, then $J$ is an ideal generated by projections such that $\left( \bigotimes_{ k = 1}^{ t( \Gamma ) } \mathcal{T} \right) / J \cong C( \mathbb{T}^{t( \Gamma ) } )$.  Since $C^{*} (A_{ \Gamma}^{+} ) / I$ is isomorphic to a graph algebra and every ideal generated by projections in a graph algebra is gauge invariant, by \cite[Corollary~3.5 and Theorem~3.6]{tbdpirws:crg} every quotient of $C^{*} (A_{ \Gamma}^{+} ) / I$ by an ideal generated by projections is isomorphic to a graph algebra.  Hence, $C( \mathbb{T}^{t( \Gamma ) } ) \cong\left( \bigotimes_{ k = 1}^{ t( \Gamma ) } \mathcal{T} \right) / J$ is isomorphic to a unital graph algebra.  Since the only unital commutative graph algebra is isomorphic to finite direct sums of $\Cz$ and $\mathbb{T}$, we must have that $t( \Gamma ) =1$.

In both cases, we have shown that $t( \Gamma ) \leq 1$.  Suppose $o ( \Gamma ) \neq 0$ or $N_{k} ( \Gamma ) \neq 0$ for some $k$, then there exists an $i$ such that $C^{*} ( A_{ \Gamma_{i}}^{+} ) / J_{ii}$  is a Kirchberg algebra.  Hence, by \cite[Theorem~3.15]{ekncp:eeccao} and \cite[Corollary~3.4]{atww:ssc} $C^{*} ( A_{ \Gamma }^{+} ) / I \cong \bigotimes_{i} C^{*} ( A_{ \Gamma_{i} }^{+} ) / J_{ii}$ is an $\mathcal{O}_{\infty}$-absorbing C*-algebra.  Since every unital graph algebra that is $\mathcal{O}_{\infty}$-absorbing must have finitely many ideals and since
\begin{align*}
\mathrm{Prim} ( C^{*} ( A_{ \Gamma }^{+} ) / I  ) &\cong \begin{cases}
 \prod_{ k = 1}^{ t( \Gamma ) } \mathrm{Prim} ( \mathcal{T} ) &\text{if there exists $i$ with $| V_{i} | = 1$} \\
\{ \bullet \} &\text{otherwise},
\end{cases}
\end{align*}
we have that $t( \Gamma ) = 0$.  Hence, we only get a graph algebra in the case $t(\Gamma)=1$ when all other data vanish.

Suppose $t( \Gamma ) = 0$.  Note that $1 < | V_{i} |$ for all $i$.  Thus, $C^{*} ( A_{ \Gamma_{i} }^{+} )$ is a unital properly infinite C*-algebra, and $\mathrm{Prim} ( C^{*} ( A_{ \Gamma_{i} }^{+} ) ) = \{ x_{i} , y_{i} \}$ with open sets $\{ \emptyset , \{ x_{i} \} , \{ x_{i} , y_{i} \} \}$ when $| V_{i} | < \infty$ and $\mathrm{Prim} ( C^{*} ( A_{ \Gamma_{i} }^{+} ) ) \cong \{ \bullet \}$ when $| V_{i} | = \infty$.  

We claim that $| \menge{k}{N_{k} ( \Gamma ) \neq 0} | < \infty$ and $N_{k}( \Gamma ) < \infty$ for all $k$.  Suppose first $| \menge{k}{N_{k} ( \Gamma ) \neq 0} | = \infty$ or $N_{k}( \Gamma ) = \infty$ for some $k$.  Then $C^{*} ( A_{ \Gamma}^{+} ) \cong \bigotimes_{ i = 1}^{\infty} C^{*} ( A_{ \Gamma_{i} }^{+} )$ and $C^{*} ( A_{ \Gamma}^{+} )$ has infinitely many ideals.  By Lemma \ref{infirrcomp-pi}
, $C^{*} ( A_{ \Gamma}^{+} )$ is $\mathcal{O}_{\infty}$-absorbing.  Again, using the fact that a unital graph algebra that is $\mathcal{O}_{\infty}$-absorbing has finitely many ideals, we have a contradiction.  Therefore, $| \menge{k}{N_{k} ( \Gamma ) \neq 0} | < \infty$ and $N_{k}( \Gamma ) < \infty$ for all $k$, proving the claims in (2).

Note that 
\begin{align*}
C^{*} ( A_{ \Gamma }^{+} ) \cong ( E_{1}^{0} )^{ \otimes N_{0} ( \Gamma ) } \otimes \bigotimes_{ n = 1}^{\infty}  ( E_{1+n}^{-1})^{ \otimes N_{-n} (\Gamma) } \otimes \bigotimes_{ n = 1}^{\infty}  ( E_{1+n}^{+1})^{ \otimes N_{n} (\Gamma) } \otimes ( \mathcal{O}_{\infty} )^{ \otimes o ( \Gamma ) }.
\end{align*}
By Lemma~\ref{lem:MEgraph}, (1) and (2) hold.

In the other direction, we have in case (1) that   $C^{*} ( A_{ \Gamma }^{+} ) \cong \mathcal{T}$ which is isomorphic to a graph algebra.  And in case (2) we have that   either
\begin{align*}
C^{*} ( A_{ \Gamma }^{+} ) \cong ( E_{2}^{-1} )^{ \otimes N_{-1} ( \Gamma ) } \otimes ( E_{2}^{+1} )^{ \otimes N_{1} ( \Gamma ) } \otimes ( \mathcal{O}_{\infty} )^{\otimes o ( \Gamma ) },
\end{align*}
\begin{align*}
C^{*} ( A_{ \Gamma }^{+} ) \cong E_{m}^{+1} \otimes ( E_{2}^{-1} )^{ \otimes N_{-1} ( \Gamma ) } \otimes ( E_{2}^{+1} )^{ \otimes N_{1} ( \Gamma ) } \otimes ( \mathcal{O}_{\infty} )^{\otimes  o ( \Gamma ) }
\end{align*}
for some $m \neq 2$, or 
\begin{align*}
C^{*} ( A_{ \Gamma }^{+} ) \cong E_{m}^{-1} \otimes ( E_{2}^{-1} )^{ \otimes N_{-1} ( \Gamma ) } \otimes ( E_{2}^{+1} )^{ \otimes N_{1} ( \Gamma ) } \otimes ( \mathcal{O}_{\infty} )^{ \otimes o ( \Gamma ) }
\end{align*}
for some $m \neq 2$.  If $ o ( \Gamma ) \geq 1$, then by \cite[Theorem~3.15]{ekncp:eeccao}, $( \mathcal{O}_{\infty} )^{ \otimes o ( \Gamma ) } \cong \mathcal{O}_{\infty}$.  Hence, by Lemma~\ref{lem:AFidealCKquotient}, $C^{*} ( A_{ \Gamma }^{+} )$ is isomorphic to a graph algebra.
\end{proof} 

\begin{remark}
The relation between a (undirected, loop-free) graph $\Gamma$ and a directed graph $G_\Gamma$ with $C^{*} ( A_{ \Gamma }^{+} )\cong C^*(G_\Gamma)$ is somewhat opaque, although the proof given above is in principle constructive. In Figure \ref{somegraphs} below we present eight graphs presenting the C*-algebras given by five-vertex graphs of Figure \ref{fivecases} in the unshaded regions.
\end{remark}

We conclude by establishing semiprojectivity and non-semiprojectivity of $C^* (A_{ \Gamma }^{+} )$ in a number of cases, covering for instance all graphs with 5 or fewer vertices. We note, however, that this theorem does not contain a full answer to the question of which of the C*-algebras under study are semiprojective. The most basic open case has $N_{-2}=2$ and may be represented by a graph with 6 vertices.

\begin{theorem}\label{spnsp}\mbox{}
\begin{enumerate}
\item When $t(\Gamma)>1$, $C^* (A_{ \Gamma }^{+} )$ is not semiprojective.
\item When $t(\Gamma)=1$, $C^*(A_{ \Gamma }^{+} )$ is semiprojective if and only if
\[
o(\Gamma)=\sum_k N_k(\Gamma)=0.
\]
\item When $t(\Gamma)=0$, $C^* (A_{ \Gamma }^{+} )$ is semiprojective when $N_{-1} ( \Gamma )+N_{1} ( \Gamma ) < \infty$ and
\[
\sum_{|k|\not=1}N_k(\Gamma)\leq1.
\]
\end{enumerate}
\end{theorem}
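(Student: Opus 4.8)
The plan is to combine the structural description of $C^*(A_{\Gamma}^+)$ coming from Theorem~\ref{list} --- in the cases with $o(\Gamma)>0$ or $N_0(\Gamma)>0$ simplified via \eqref{Gamstd} --- with Theorem~\ref{isgraph} and the graph constructions feeding into it. The positive assertions --- the clause $o(\Gamma)=\sum_k N_k(\Gamma)=0$ of~(2), and all of~(3) --- are precisely the situations in which Theorem~\ref{isgraph} applies, so there $C^*(A_{\Gamma}^+)$ is isomorphic to a unital graph algebra $C^*(E)$. Inspecting the proof of Theorem~\ref{isgraph} together with Lemmas~\ref{lem:AFidealCKquotient} and~\ref{lem:MEgraph}, one sees that $E$ may be taken with finitely many vertices, satisfying Condition~(K), and having at most one singular vertex, which emits either no edges or infinitely many. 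I would then invoke the known characterisation of semiprojectivity for graph C*-algebras (building on semiprojectivity of $\cO_n$ and $\cO_\infty$ and of Cuntz--Krieger algebras, and on the graph techniques of \cite{setkmtjw:rking}) to conclude that such a $C^*(E)$, hence $C^*(A_{\Gamma}^+)$, is semiprojective; the sub-case $C^*(A_{\Gamma}^+)\cong\cT$ requires only semiprojectivity of the Toeplitz algebra.

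For the negative assertions --- $t(\Gamma)>1$ in~(1), and $t(\Gamma)=1$ with $o(\Gamma)>0$ or some $N_k(\Gamma)>0$ in~(2) --- the strategy is to exhibit as a quotient of $C^*(A_{\Gamma}^+)$ a C*-algebra that is demonstrably not semiprojective, and then to transport the obstruction back. When $t(\Gamma)\ge 2$, applying characters to all but two of the Toeplitz tensor factors and a maximal quotient to the remaining factor $R$ produces a quotient $\cong\cT\otimes\cT\otimes D$, and then a further quotient $\cong C(\Tz^2)\otimes D$, where $D$ is $\Cz$ or a simple Kirchberg algebra. I would argue that $C(\Tz^2)\otimes D$ is not semiprojective: for $D=\Cz$ this is the classical non-semiprojectivity of $C(\Tz^2)$ coming from Voiculescu's almost-commuting unitaries alluded to in the introduction, and for $D$ a Kirchberg algebra it should follow from the fact that semiprojectivity constrains the dimension of the primitive ideal space, which here is the $2$-torus. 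When instead $t(\Gamma)=1$ but $R=C^*(A_{\Gamma'}^+)\not\cong\Cz$, only one Toeplitz generator is present, so I would use that $R$ is unital and infinite --- each of its nontrivial co-irreducible tensor factors contains one of the proper isometries $S_v$ --- to supply a second proper isometry, so that a suitable quotient of $\cT\otimes R$ contains a $\cT\otimes\cT$-type or $C(\Tz)\otimes D$-type configuration against which the same non-lifting argument runs.

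The step I expect to be the main obstacle is this last transport. Semiprojectivity does not pass to quotients in general, so one cannot simply read off non-semiprojectivity of $C^*(A_{\Gamma}^+)$ from non-semiprojectivity of one of its quotients; instead the argument must be carried out directly against an explicit inductive system --- an asymptotic sequence algebra over $\prod_k (M_{n_k}\otimes D)$ --- using crucially that in the relevant finite stages an isometry (namely the image of a Toeplitz generator) is forced to be a unitary, so that a hypothetical lift of a Voiculescu-type $*$-homomorphism yields commuting unitaries contradicting the uniform separation of the Voiculescu pairs from commuting unitaries. A related subtlety, sharpest in case~(2) with $o(\Gamma)=0$, is that the ``one-circle'' quotient $C(\Tz)\otimes D$ may itself be semiprojective, so one genuinely needs the extra rigidity contributed by the Toeplitz factor rather than merely the failure of $C^*(A_{\Gamma}^+)$ to be a graph algebra; this is also why the present statement stops short of a full characterisation, the first uncovered case $N_{-2}(\Gamma)=2$ having no circle at all.
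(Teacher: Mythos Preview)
Your treatment of the positive (semiprojective) assertions is essentially the paper's: realise $C^*(A_{\Gamma}^+)$ as a unital graph algebra via Theorem~\ref{isgraph} and then appeal to the semiprojectivity results for graph C*-algebras. The paper makes this precise by observing that the resulting algebra is either strongly purely infinite (when $o(\Gamma)>0$) or has $\cK$ as its unique minimal ideal with strongly purely infinite quotient, and then citing \cite{setk:spipgc}; your reference to \cite{setkmtjw:rking} is not the right one here, but the idea is the same.

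For the negative assertions your approach diverges from the paper's and has a genuine gap. The paper dispatches (1) and the non-semiprojective half of (2) in one line by invoking \cite[Corollary~4.4.16]{de:sccsc}: if $A$ is unital, nuclear, infinite-dimensional and in the UCT class, then $A\otimes\cT$ is never semiprojective. Since $t(\Gamma)\geq 1$ forces a tensor factor $\cT$, and the complementary factor is infinite-dimensional exactly when $t(\Gamma)\geq 2$ or when $o(\Gamma)+\sum_k N_k(\Gamma)>0$, this covers both cases at once. Your proposed direct argument, by contrast, rests on the claim that in the building blocks $M_{n_k}\otimes D$ of your inductive system ``an isometry is forced to be a unitary''. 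This is true only when $D$ is finite, i.e.\ when $D=\Cz$; but in case~(2), and in every instance of case~(1) with a non-Toeplitz co-irreducible component, the quotient $D$ you must use is a Kirchberg algebra, hence properly infinite, and $M_{n_k}\otimes D\cong D$ contains plenty of non-unitary isometries. The lifts of your Toeplitz generators therefore need not be unitaries, the Voiculescu obstruction does not bite, and the argument collapses. Your own suggestion of borrowing a second proper isometry from $R$ does not help: that isometry never becomes unitary in any quotient of $R$, so no ``two commuting unitaries'' configuration arises. The clean route is the one the paper takes---use Enders' theorem on $A\otimes\cT$ directly and avoid the inductive-system machinery altogether.
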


\begin{proof}
We first note that by \cite[Corollary 4.4.16]{de:sccsc},  a C*-algebra of the form $A\otimes \cT$ with $A$ unital, nuclear, infinite-dimensional and in the UCT-class can never be semiprojective. This proves (1) and (2) since $\cT$ itself is trivially semiprojective.

For (3), we first apply Theorem \ref{isgraph} to see that $C^* (A_{ \Gamma }^{+} )$ in this case is a unital graph algebra. We have seen that when $o(\Gamma)>0$, $C^*(A_{ \Gamma }^{+} )$ is strongly purely infinite, and when $o(\Gamma)=0$, there is a minimal ideal $\cK$ in  $C^*(A_{ \Gamma }^{+} )$ so that $C^*(A_{ \Gamma }^{+} )/\cK$ is strongly purely infinite. In either case, \cite{setk:spipgc} applies to guarantee that the C*-algebra is semiprojective.
\end{proof}

\vspace{2cm}

\begin{figure}[h]
\begin{center}
\begin{tabular}{ccccc}
$\xymatrix{
\bullet\ar@(dr,dl)[]\ar@(d,l)\ar@(dl,ul)\ar@(l,u)\ar@(ul,ur)[]\ar[r]\ar@/^/[r]\ar@/^12pt/[r]\ar@/_12pt/[r]\ar@/_/[r]&\circ}$
&&
$\xymatrix{
\bullet\ar@(dr,dl)[]\ar@(d,l)\ar@(l,u)\ar@(ul,ur)[]\ar@/^/[r]\ar@/^12pt/[r]\ar@/_12pt/[r]\ar@/_/[r]&\circ}$&&
$\xymatrix{
\bullet\ar@(d,l)\ar@(dl,ul)\ar@(l,u)\ar[r]\ar@/^/[r]\ar@/_/[r]&\circ}$\\[0.5cm]
$N_{-4}=1$&&$N_{-3}=1$&&$N_{-2}=1$\\[1cm]
$\xymatrix{&\\
\bullet\ar@(d,l)\ar@(dl,ul)\ar@/^/[r]\ar@/_/[r]&\circ}$&&
$\xymatrix{
\bullet\ar[d]&\\
\bullet\ar@/_/[d]\ar@/^/[d]\ar@(l,u)[]\ar@(ld,lu)[]\ar@/^/[r]&\bullet\ar@(d,r)[]\ar@(rd,ru)[]\ar@/^/[l]\\\circ}$
&&
$\xymatrix{\bullet\ar[d]&\\
\bullet\ar@(d,l)\ar@(dl,ul)\ar[r]&\circ}$\\[0.5cm]
$N_{-1}=1$&&$N_0=1$&&$N_1=1$\\[1cm]
$\xymatrix{&\bullet\ar@(r,u)[]\ar@(ul,ur)\ar[dr]\ar[dl]&\\\bullet\ar[dr]\ar@(l,u)[]\ar@(ul,ur)[]&&\bullet\ar[dl]\ar@(r,u)[]\ar@(ul,ur)[]\\&\circ}$&&&&
$\xymatrix{&\bullet\ar@(r,u)[]\ar@(ul,ur)\ar[dr]\ar[dl]&\bullet\ar[d]\\\bullet\ar[dr]\ar@(l,u)[]\ar@(dl,ul)[]&&\bullet\ar[dl]\ar@(r,u)[]\ar@(dr,ur)[]\ar@(d,r)[]\\&\circ}$\\$N_{-1}=2$&&&&$N_{-1}=N_{-2}=1$
\end{tabular}
\end{center}
\caption{Graphs representing cases from Figure \ref{fivecases}}\label{somegraphs}
\end{figure}
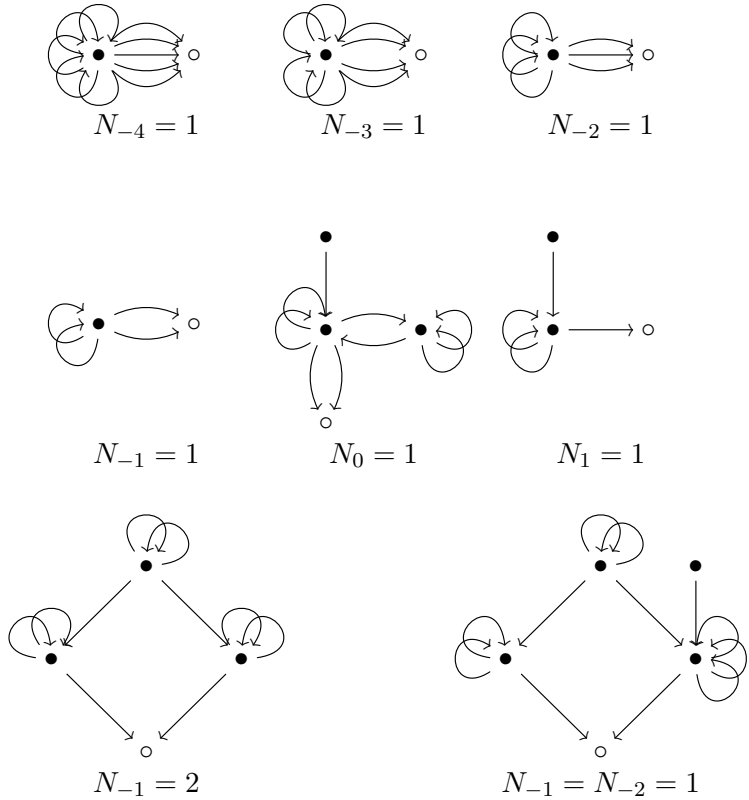

\section{Acknowledgements}
All authors acknowledge support by the Danish National Research Foundation through the Centre for Symmetry and Deformation (DNRF92), and thank the Department of Mathematical Sciences at the University of Copenhagen, where the initial phases of this work were carried out, for providing excellent facilities. The first named author gratefully acknowledges support from the VILLUM Foundation, and the third named author gratefully acknowledges support from the Simons Foundation (\#279369 to Efren Ruiz).


\providecommand{\bysame}{\leavevmode\hbox to3em{\hrulefill}\thinspace}
\providecommand{\MR}{\relax\ifhmode\unskip\space\fi MR }
\providecommand{\MRhref}[2]{%
  \href{http://www.ams.org/mathscinet-getitem?mr=#1}{#2}
}
\providecommand{\href}[2]{#2}

\end{document}